\newcommand{\bR}{\mathbb R} 
\newcommand{\bS}{\mathbb S} 
\newcommand{\cR}{\mathcal R} 
\newcommand{\cF}{\mathcal F} 
\newcommand{\cE}{\mathcal E}
\newcommand{\cL}{\mathcal L}
\newcommand{\cH}{\mathcal H}  
\newcommand{\cD}{\mathcal D}
\newcommand{\loc}{{\textit{loc}}}
\newcommand{\dd}{\mathrm d}  
\newcommand{\weaklystar}{\stackrel{*}{\rightharpoonup}}
\DeclareMathOperator{\Tr}{Tr}
\DeclareMathOperator{\Div}{div}
\newcommand{\D}{D} 
\newcommand{\mres}{
	\,\raisebox{-.127ex}{\reflectbox{\rotatebox[origin=br]{-90}{$\lnot$}}}\,
}
\newcommand{\K}{K}  
\setlist[enumerate]{leftmargin=.5in}
\setlist[itemize]{leftmargin=.5in}
\newtheorem{prop}{Proposition}[section]
\newtheorem{thm}[prop]{Theorem}
\newtheorem{lemma}[prop]{Lemma}
\theoremstyle{remark}
\newtheorem{remark}{Remark}
\title{Inclusion and estimates for the jumps of minimizers in~variational denoising\thanks{M{\L}~was supported by grant {\tt 2020/36/C/ST1/00492} of the National Science Centre, Poland. AC~acknowledges the support of the ``France 2030'' funding ANR-23-PEIA-0004 (``PDE-AI'').}}
\author{Antonin Chambolle\thanks{CEREMADE, CNRS, Université Paris-Dauphine, PSL University, Paris, MOKAPLAN,  INRIA, Paris, France ({\tt chambolle@ceremade.dauphine.fr}).}
	\and Micha{\l} {\L}asica\thanks{Institute of Mathematics of the Polish Academy of Sciences, Warsaw, Poland ({\tt mlasica@impan.pl}).}}
\begin{document}

\maketitle

\begin{abstract}
  We study stability and inclusion of the jump set of minimizers of convex denoising functionals, such as the celebrated ``Rudin--Osher--Fatemi'' functional, for scalar or vectorial
  signals. We show that under mild regularity assumptions on the data fidelity term and the regularizer, the jump set of the minimizer is essentially a subset of the original jump set. Moreover, we give an estimate on the magnitude of jumps in terms of the data. This extends old results, in particular of the first author (with V.~Caselles and M.~Novaga) and of T.~Valkonen, to much more general cases. We also consider the case where the original datum has unbounded variation, and define a notion of its jump set which, again, must contain the jump set of the solution.
\end{abstract}

\paragraph{Keywords.} 
Inverse problems, variational methods, total variation.

\paragraph{MSC 2010.}
49N60, 35J70, 49J52, 94A08.

\section{Introduction}

The Total Variation regularizer was proposed for image denoising in~\cite{ROF} and has
become popular for its simplicity and its ability to recover edges and discontinuities in
the restored images. Even if it is largely outdated and has much lower performances
than non-local~\cite{BM3D,Buadesetal,Lebrunetal},
(learned) patches and dictionary-based~\cite{EladAharon,ZoranWeiss} or neural network based~\cite{JainSeung} techniques, it remains useful as a regularizer for large
scale inverse problems (sometimes combined with machine learning
and plug-n-play type~\cite{PnP} methods, see for instance~\cite{Wang:23}), as
it is convex and relatively simple to optimize, in particular in combination 
with other (ideally also convex) terms.

An interesting question, answered first in~\cite{Casellesetal2007}, is whether
the total variation-denoising method
can create spurious structures and discontinuities, or if the edge set of the
original image is preserved. Precisely, given $f\in BV(\Omega)$ a (scalar) function with
bounded variation, representing the grey-level values of an image
defined in a domain $\Omega\subset\bR^m$ ($m$ an integer,  $2$ or $3$ in most applications), and
with jump set $J_f$ (see the precise definition in Sec.~\ref{sec:SJ}), one considers $u$ which solves:
\begin{equation}
  \min_u \int_\Omega |Du| + \frac{1}{2}\int_\Omega |u-f|^2. 
  \label{eq:ROF}
\end{equation}
The main result of~\cite{Casellesetal2007} asserts that $J_u\subset J_f$ (up to a
set negligible for the surface measure); in addition,
$u^+-u^-\le f^+-f^-$ a.\,e.~on the jump set of $u$. 
It is also deduced that  the $L^2$-gradient flow of
the total variation, starting from an initial function $u(0)\in L^{m/(m-1)}(\Omega)$,
has a diminishing jump set:
$s>t>0\Rightarrow J_{u(s)}\subset J_{u(t)}$ (more precise results are found in~\cite{CasellesJalalzaiNovaga}). This is generalized to some integrands (such as
the graph area, anisotropic total variations) already in~\cite{Casellesetal2007},
and further variants (including strictly convex data terms) in~\cite{jalalzai2014discontinuities,JalalzaiChambolle}; see also~\cite{jalalzai}.
The approach in the above mentioned papers consists in comparing the curvatures
of the level sets of minimizers. One shows that at (approximate) continuity points
of $f$, these curvatures are determined by the level, and ordered in a way which excludes the
possibility that the boundaries of two different level sets coincide.
The technique is relatively simple and elegant, and even allows to derive
basic regularity results away from the jumps~\cite{Mercier,Casellesetal_regularity},
but it is restricted to the scalar case.

An alternative approach was proposed by T.~Valkonen in~\cite{Valkonen2015}. It does not involve level sets of the minimizer, and therefore is not limited to the scalar case. In particular, the case where $u,f$ are vector valued and the total variation is defined by means of the Frobenius norm of the matrix $Du$ should enter the framework of~\cite{Valkonen2015}, even if this does not seem to be explicit in the literature. One reason for this is the relative complexity of the criterion in~\cite{Valkonen2015} (the \textit{double-Lipschitz comparability condition}), which is not always straightforward to check in practice, and the technicality of the papers~\cite{Valkonen2015,Valkonen2017}, 
which might have made them less accessible
to non-specialists, despite their interest and originality.

In this new study, we introduce a general approach for addressing the issue of jump
inclusion and control in variational denoising problems modeled on \eqref{eq:ROF}.
Essentially, we show that jump inclusion occurs when the regularizer is differentiable
with respect to an elementary class of inner variations of the solution, and derive an estimate on the magnitude of the jump (see for instance~\cite{CasellesJalalzaiNovaga}). This conclusively demonstrates that the jump inclusion property does not require any particular structure of the regularizer, but only its (mild) regularity. 
The differentiability assumption can roughly be viewed as a relaxation of double-Lipschitz comparability of \cite{Valkonen2015}, at least in the context of convex regularizers. It is satisfied by many regularizers appearing in imaging literature,
such as the Frobenius or (more surprisingly) the Nuclear (or Trace) Norm-based total variation in a vectorial
setting\footnote{Precisely,
it holds for Schatten-type norms, but not Ky-Fan type norms such as the Spectral Norm.}
(see for instance~\cite{NaturalTV,CollaborativeTV}).  Interestingly, while the extension of Valkonen's approach
to higher order regularizers, addressed in~\cite{Valkonen2017}, excludes
the ``Total
Generalized Variation'' (TGV) of~\cite{TGV2010}, a relatively simple modification
of our proof allows to show jump inclusion in a slightly regularized version
of that case, at least
whenever the solution $u$ is bounded (which can be enforced by a box type constraint
in the minimization). The result for the exact ``TGV'' case remains open and, if
true, probably requires a mix of our techniques and the ideas in~\cite{Valkonen2017},
which address successfully other types of inf-convolution based regularizers.

Our approach is based on a very simple observation: at a jump point, the data term (such as
the squared norm in~\eqref{eq:ROF}) will have different left and right derivatives along
inner variations orthogonal to the jump, so that, if the regularizer is differentiable, some
inequality is derived which involves only the data term. The idea can be illustrated by an elementary 1D example: consider $\Omega=]\!-\!1,1[\subset\bR$, $f\in BV(\Omega)$, and let $u$
minimize~\eqref{eq:ROF}. Consider then $\bar x\in J_u$, with $u^+(\bar x)>u^-(\bar x)$. Without
loss of generality we assume $u^+(\bar x)$ is the right-sided limit of $u$ at $\bar x$.
Denote $f^+(\bar x)$ the right-sided limit of $f$, and $f^-(\bar x)$ the left-sided limit (with possibly $f^+(\bar x)\le f^-(\bar x)$).
Then, if $\varphi$ is a smooth approximation of $\chi_{[\bar x-\delta,\bar x+\delta]}$, for $\delta>0$ small, for $\tau\in ]0,\delta[$ one has:
\begin{align*}
&    \int_\Omega (u(x+\tau \varphi(x))-f(x))^2 - (u(x)-f(x))^2 \dd x \approx \tau [(u^+(\bar x) - f^-(\bar x))^2 - (u^-(\bar x) - f^-(\bar x))^2], \\[2mm]
&    \int_\Omega (u(x-\tau \varphi(x))-f(x))^2 - (u(x)-f(x))^2 \dd x \approx \tau [(u^-(\bar x) - f^+(\bar x))^2 - (u^+(\bar x) - f^+(\bar x))^2]
\end{align*}
as $\tau\to 0$.
On the other hand, since the total variation of $u(x\pm \tau\varphi(x))$ is the same as the total variation of $u$, thanks to minimality of $u$ in~\eqref{eq:ROF} we deduce, sending $\tau\to 0$:
\begin{align*}
&    (u^+(\bar x) - f^-(\bar x))^2 - (u^-(\bar x) - f^-(\bar x))^2 \ge 0 ,\\
&    (u^-(\bar x) - f^+(\bar x))^2 - (u^+(\bar x) - f^+(\bar x))^2 \ge 0,
\end{align*}
that is: $(u^+(\bar x)-u^-(\bar x))(u^+(\bar x)+u^-(\bar x)- 2f^-(\bar x))\ge 0$
and $(u^+(\bar x)-u^-(\bar x))(u^+(\bar x)+u^-(\bar x)- 2f^+(\bar x))\le 0$.
We deduce that 
\[f^-(\bar x)\le \frac{u^+(\bar x)+u^-(\bar x)}{2}\le f^+(\bar x)\]
(and in particular $f^-(\bar x)\le f^+(\bar x)$), so
that either $\bar x \in J_f$, or $(u^++u^-)/2=f$ at $\bar x$. This is elementary, and almost the conclusion we would like to reach.

Actually proving the jump set inclusion (and an estimate on the jump) in any dimension, following the same idea, is not much harder but requires a more subtle choice of the variation. The solution is found  in Valkonen's work~\cite[Sec.~6]{Valkonen2015}, which uses a competitor for the minimization problem given by a convex  combination of the minimizer itself and its inner variation, see Lemma~\ref{lem:inner_diff} below. We show  (by a much simpler argument/calculation than in~\cite{Valkonen2015}) that together with the differentiability of the regularizer along inner variations, it allows to get a general estimate on the jump of $u$. This is done in Section~\ref{sec:main} (Theorem~\ref{thethm}). We also compare there our differentiability assumption with the double-Lipschitz comparability of \cite{Valkonen2015}.

Further (Sec.~\ref{sec:DiffReg}), we discuss general regularizers which satisfy the assumptions for our main result to hold. In particular, we find that the Frobenius or Nuclear-norm based Total Variations for vector-valued images meet our differentiability hypothesis (Section~\ref{sec:examples}). 
In Section~\ref{sec:Bounded} we discuss conditions which ensure that 
the solution $u$ to our variational problems, in the unconstrained case, are locally bounded.
In Section~\ref{sec:TGV} (Theorem \ref{thm:inf_conv}) we show how a small adjustment of the proof extends the result to the inf-convolution type regularizers such as smoothed variants of the Total Generalized Variation (TGV)~\cite{TGV2010}, studied in~\cite{Valkonen2017} again with a more complicated approach, and only partial conclusions. 
In that latter setting, our approach seems too simple and might need to be enriched with some of the ideas of~\cite{Valkonen2017} to be able to reach a full conclusion, but this remains difficult.

In Theorems \ref{thethm} and \ref{thm:inf_conv}, in line with previous results on the subject mentioned earlier, we work under the assumption that the noisy datum $f$ is a $BV$ function. However, in the case of highly oscillating noise, $f$ can have unbounded variation. We treat this general situation in Section \ref{sec:nonBV} by introducing a relaxed notion of jump set $\widetilde{J}_f$ for $f$ (as the set of points where $f$ differs significantly on both sides of a hyperplane) for which we can still show that it must contain the jump set of the solution. The set $\widetilde{J}_f$ has Lebesgue measure $0$ and essentially coincides with the usual jump set if $f$ is in $BV$. In Section~\ref{sec:experiment} we provide, as an illustration, an example where $f$ if is the sum of a $BV$ function and a bounded oscillating noise, and show the reconstruction with various types of color total variations, as introduced in~\cite{NaturalTV}.

\section{Preliminaries}
\subsection{General notation}
We will consider $\bR^n$-valued functions, $n\ge 1$, defined on some
open subset $\Omega$ of $\bR^m$, $m\ge 1$ (most of the proofs are written for $m\ge 2$,
yet the case $m=1$ follows by trivial simplification).
Given $x\in \bR^m$, $r>0$, $\nu\in\bS^{m-1}$ a unit vector, one denotes:
\begin{equation}\label{eq:notation}
\begin{aligned}
  &  B_r(x) = \{y\in\bR^m: |y-x|<r\},  \quad B_r^\pm(x,\nu) = \{y \in B_r(x)\colon \pm\nu \cdot (y-x) \geq 0\}, \\
& B^{m-1}_r(x,\nu) = B_r(x)\cap (x+\nu^\perp),  \quad Q_r(x, \nu) = B^{m-1}_r(x, \nu) + ]\!-r,r[\nu, \\
&Q^+_r(x, \nu) = B^{m-1}_r(x, \nu) + [0,r[\nu,  
\quad Q^-_r(x, \nu) = B^{m-1}_r(x, \nu) + ]\!-r,0]\nu.
\end{aligned}
\end{equation}
where $|\cdot|$ is the standard Euclidean norm.

For $\mu$ a Radon measure and $k\le m$, we define for any $x\in\Omega$ the $k$-dimensional density of $\mu$ at $x$ as the limit:
\[
\Theta^{k}(\mu,x)=\lim_{r\to 0} \frac{\mu(B_r(x))}{\omega_k r^k},
\]
when it exists. Here, $\omega_k$ is the volume of the unit ball of dimension $k$.

\subsection{The approximate discontinuity  set and the jump set}\label{sec:SJ}
Let $w \in L^1_{loc}(\Omega)^n$. Following \cite[Definition 3.63]{afp}, we say that $w$ has an approximate limit at $x \in \Omega$ if there exists $z \in \bR^n$ such that 
\begin{equation} \label{apprlim_def}
\lim_{r \to 0^+} \fint_{B_r(x)} |w(y) - z| \dd y =0. 
\end{equation}
If no such $z$ exists, $x$ is called an approximate discontinuity point of $w$. The set of all approximate discontinuity points of $w$ is denoted $S_w$. It is well known that $\cL^m(S_w)=0$ \cite[Proposition 3.64]{afp}. 

On the other hand, if there exist $\nu_w \in \bS^{m-1}$, $w^\pm \in \bR^n$, $w^- \neq w^+$, such that 
\begin{equation} \label{jump_def}
\lim_{r \to 0^+} \fint_{B_r^\pm(x,\nu_w)} |w(y) - w^\pm| \dd y =0, 
\end{equation} 
$x$ is called an (approximate) jump point of $w$.  The set of all jump points of $w$ is called the (approximate) jump set of $w$ and is denoted by $J_w$. Clearly $J_w \subset S_w$. However, the condition defining jump points is rather rigid: even for a general locally integrable function, the jump set is countably $\cH^{m-1}$-rectifiable \cite{DelNin}---that is, it can be covered up to a $\cH^{m-1}$-negligible set by a countable union of Lipschitz or, equivalently, $C^1$ graphs \cite[p.~80]{afp}. Recall that $\Gamma \subset \Omega$ is called a $C^1$ (resp.\ Lipschitz) graph if there exists a vector $\nu \in \bS^{m-1}$, a relatively open subset $U$ of a hyperplane in $\bR^m$ parallel to $\nu^\perp$ and a $C^1$ (resp. Lipschitz) function $\widetilde{\gamma} \colon U \to \bR$ such that image of the map $\gamma \colon U \to \Omega$ given by $\gamma(x') = x' + \widetilde{\gamma}(x') \nu$ coincides with $\Gamma$. A map of this form is called a \emph{graphical parametrization} of $\Gamma$.

We observe that $B_r^\pm(x,\nu_w)$ may be replaced with $Q_r^\pm(x,\nu_w)$ in \eqref{jump_def} without changing the definition. Moreover, if $x \in \Omega\setminus S_w$, then \eqref{jump_def} holds with $w^+ = w^- = z$ and any $\nu_w \in \bS^{m-1}$. Thus, \eqref{jump_def} defines a (multi)function $x \mapsto \{w^+(x), w^-(x)\}$ on $J_w \cup (\Omega \setminus S_w)$. For $x \in J_w$, the triple $(w^+(x),w^-(x),\nu_w(x))$ is defined uniquely up to a permutation of $(w^+, w^-)$ and a change of sign of $\nu_w$. In particular, the tensor product
$(w^+(x)-w^-(x))\otimes \nu_w(x)$ is uniquely defined for $x \in J_w$ (and for $x \in \Omega \setminus S_w$, where it vanishes). 

We recall the notion of Lebesgue points closely related to approximate continuity. If $\mu$ is a Radon measure on $\Omega$ and $w \in L^p_{loc}(\Omega,\mu)^n$, $p \in [1,\infty[$, we say that $x \in \Omega$ is a ($p$-)Lebesgue point of $w$ (with respect to $\mu$), if  
\[\lim_{r \to 0^+} \fint_{B_r(x)} |w(y) - w(x)|^p \dd \mu(y) =0.\]
It is known that $\mu$-almost every $x \in \Omega$ is a Lebesgue point for any given $w$ \cite[Section 1.7]{EvansGariepy}. We observe that every $p$-Lebesgue point is a $q$-Lebesgue point if $1 \leq q \leq p$; if $w \in L^\infty_{loc}(\Omega,\mu)^n$, the notion does not depend on $p$. We will use the notion of Lebesgue points in particular for functions in the space $L^p(\Gamma)^n$, with $\Gamma$ a $C^1$ graph contained in $\Omega$---we note that this space coincides with $L^p(\Omega, \cH^{m-1}\mres \Gamma)^n$. 

\subsection{Functions of bounded variation}\label{sec:BV}
Throughout the paper, we will consider convex functionals $\cE$ defined in $L^1_\loc (\Omega)^n$, $n\ge 1$, for $\Omega\subset\bR^m$ an open set. We will work with minimizers of $\cE$, which will be assumed to belong to $BV_\loc(\Omega)^n$. We recall that
\[
BV(\Omega)^n=\left\{ w\in L^1(\Omega)^n\,:\, TV(w)<\infty\right\}
\]
where the total variation $TV$ is defined by
\[
TV(w)=\sup\left\{ -\int w\Div \varphi\,\dd x\,:\,\varphi\in C_c^\infty(\Omega;\bR^m),\  |\varphi(x)|\le 1\ \text{for } x\in\Omega\right\}.
\]
It is easily checked (from Riesz's theorem) that $TV(w)$ is finite if and only if the distributional
derivative $Dw$ is a bounded Radon measure in $\Omega$, in which case
\[ TV(w) = \int_\Omega |Dw| = |Dw|(\Omega).\]
Then, one defines $BV_\loc(\Omega)^n= \bigcap_{A\subset\subset\Omega} BV(A)^n$, where the intersection
is on all open sets whose closure lies in $\Omega$. 

By the Federer--Vol'pert theorem \cite[Theorem 3.78]{afp}, if $w \in BV_{loc}(\Omega)^n$, the set $S_w$ is countably $\cH^{m-1}$-rectifiable and $\cH^{m-1}(S_w\setminus J_w) = 0$. In particular, the (multi)function $\{w^+, w^-\}$ is defined $\cH^{m-1}$-a.\,e.~in $\Omega$. Thus also the precise representative $\widetilde{w}$ of $w$ given by 
\[\widetilde{w} = (w^+ + w^-)/2\]
is defined up to $\cH^{m-1}$-null sets. In general, $w \in BV_{loc}(\Omega)^n$ admits one-sided traces on any oriented, countably $\cH^{m-1}$-rectifiable subset of $\Omega$, see \cite[Theorem 3.77]{afp}. Those traces coincide with $w^\pm$ $\cH^{m-1}$-a.\,e.~(up to permutation) \cite[Remark 3.79]{afp}. 

The Radon measure $Dw$ can be decomposed as:
\[
Dw = D^a w+ D^s w\,,\quad D^a w = \nabla w\, \cL^m \,,\quad D^s w = D^c w + (w^+-w^-)\otimes \nu_w\, \cH^{m-1}\mres J_w 
\]
where\begin{itemize}
    \item $D^a w$ is the absolutely continuous part of $Dw$, $D^s w$ the singular part, and $\nabla w \in L^1(\Omega)^{n\times m}$ is the Radon--Nikodym derivative of $Dw$ with respect to the Lebesgue measure $\cL^m$;
    \item $D^c w$ is the ``Cantor part'' of $Dw$, which is singular with respect to
    the Lebesgue measure and vanishes on sets of finite $(m-1)$-dimensional
    Hausdorff measure $\cH^{m-1}$;
    \item the last term $(w^+-w^-)\otimes \nu_w\, \cH^{m-1}\mres J_w$ is
    called the ``jump part'' of $Dw$.
\end{itemize}
The matrix $(D^c w/|D^c w|)(x)$ appearing in the polar decomposition of the Cantor part $D^c w
= (D^c w/|D^c w|)|D^c w|$  is known to have rank one for $|D^c w|$-a.\,e.~$x \in \Omega$ \cite{Alberti}, analogously to the jump part. We refer to~\cite{afp} for more details.

Similarly~\cite{TemamPlasticite}, $BD(\Omega)$ is defined as the space
of displacements $u\in L^1(\Omega)^m$ such that the symmetrized gradient
$Eu:=(Du+Du^T)/2$ is a bounded Radon measure, and one has:
\[
Eu = E^a w+ E^s w\,,\quad E^a w = e( w)\, \cL^m \,,\quad E^s w = E^c w + (w^+-w^-)\odot \nu_w\, \cH^{m-1}\mres J_w 
\]
with $e(w)\in L^1(\Omega)^{m\times m}$ the approximate symmetrized gradient,
$E^c w$ the Cantor part and $\odot$ the symmetrized tensor product ($a \odot b:= ((a_ib_j+a_j b_i)/2)_{i,j=1}^m$). Note that an analog of Alberti's rank one
theorem also holds in $BD$, see~\cite{DePhilippisRindler}.

\section{Setting and main result}\label{sec:main}

Let $\Omega \subset \bR^m$, $m\ge 1$ be an open set. We consider functionals $\cE \colon L^1_\loc(\Omega)^n \to [0, \infty]$ of form
\[\cE(w) = \cF(w-f) + \cR(w),  \] 
where $f \in L^1_\loc(\Omega)^n$. 
We assume the \emph{fidelity} $\cF \colon L^1_\loc(\Omega)^n \to [0, \infty]$ is given by 
\begin{equation*} 
	\cF(w) = \int_\Omega \psi(w) 
\end{equation*}
where $\psi \colon \bR^n \to [0, \infty[$ is convex. As for the \emph{regularizer} $\cR \colon L^1_\loc(\Omega)^n \to [0, \infty]$, in general we only assume that it is convex, without prescribing a particular structure. The regularizer contains prior information of the reconstructed image u, and will usually be defined as a convex integral of the distributional gradient $Dw$, possibly with an additional box constraint $w(x)\in\K$ a.\,e.\ for some closed convex set $\K \subset \bR^n$, enforced by prescribing $\cR(w) = \infty$ if $w$ does not satisfy it.   

Our aim in this paper is to provide an estimate on the jumps of minimizers of $\cE$, that is, functions $u \in L^1_\loc(\Omega)^n$ satisfying 
\begin{equation}\label{eq:mainpb}
\cE(u) = \inf \left\{\cE(w)\colon w\in L^1_\loc(\Omega)^n\right\}.
\end{equation}
In the case that $\psi$ is strictly convex, there is at most one $u$. However, without further assumptions, $u$ might not exist.  


Let $\varphi \in C_c^\infty(\Omega)^n$. For $w \in L^1_\loc(\Omega)^n$ and $\tau \in \bR$ with $|\tau|$ sufficiently small we put 
\[w^\varphi_\tau(x) = w(x + \tau \varphi(x)).\]
Suppose that $\cR(w) < \infty$. We say that $\cR$ is \emph{differentiable along inner variations} at $w$ if the limit 
\begin{equation}\label{inner_var_def}
	\lim_{\tau \to 0} \tfrac{1}{\tau}(\cR(w^\varphi_\tau) - \cR(w))
\end{equation}
exists for all $\varphi \in C_c^\infty(\Omega)^n$. 
In practice, we will only use \emph{directional} inner variations, where
$\varphi$ has the form $\widetilde{\varphi}\nu$ for $\nu\in \bS^{m-1}$ and
$\widetilde\varphi\in C_c^\infty(\Omega)$. Let us now state our main result. 

\begin{thm}\label{thethm}
	Suppose that $f \in BV_\loc(\Omega)^n$ and $u$ is minimizing in~\eqref{eq:mainpb} with $\cE(u) \neq \infty$. We assume 
	\begin{enumerate}[label=(H\arabic*)]
		\item \label{Hbv} $u \in BV_\loc(\Omega)^n$, 
\item \label{Hbdd} $u,f\in L^\infty_\loc(\Omega)^n$ or $D\psi$ is bounded, 
  \item \label{Hdiff} \newcounter{Hdiffc} \setcounter{Hdiffc}{\value{enumi}} $\cR$ is differentiable along directional inner variations at $u$. 
	\end{enumerate} 
If $\psi$ is $C^1$ and strictly convex, then $\cH^{m-1}(J_u\setminus J_f)=0$. If $\psi$ is $C^2$, then  
	\begin{equation} \label{main_ineq} 
		 (u^+ - u^-) \cdot A \, (u^+ - u^-) \leq (f^+ - f^-)\cdot A \, (u^+ - u^-) \qquad \cH^{m-1}\text{-a.\,e.~on } J_u,
	\end{equation} 
where 
\[A = \int_0^1 D^2 \psi (u^- - f^- + s(u^+ - f^+ - u^- + f^-)) \dd s.\] 
\end{thm}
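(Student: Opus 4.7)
The plan is to exploit inner variations $u^\varphi_\tau(x) = u(x+\tau\varphi(x))$ with directional $\varphi = \widetilde\varphi\,\nu$, $\widetilde\varphi \in C_c^\infty(\Omega)$, $\nu\in\bS^{m-1}$. Since such a variation does not preserve $\cR$ in any quantitatively useful way on its own, I follow the trick from~\cite{Valkonen2015} (encoded in Lemma~\ref{lem:inner_diff}) of inserting the convex combination $v_{\lambda,\tau} := (1-\lambda)u+\lambda u^\varphi_\tau$ for $\lambda\in[0,1]$. Minimality of $u$ together with convexity of $\cR$ gives the chain
\begin{equation}\label{planE1}
\cF(v_{\lambda,\tau}-f)-\cF(u-f) \;\ge\; \cR(u)-\cR(v_{\lambda,\tau}) \;\ge\; -\lambda\bigl(\cR(u^\varphi_\tau)-\cR(u)\bigr).
\end{equation}
Dividing~\eqref{planE1} by $\lambda$ and sending $\lambda\to 0^+$---where hypothesis~\ref{Hbdd} combined with $\psi\in C^1$ supplies the integrable domination needed to interchange limit and integral---yields, for every sufficiently small $\tau$,
\begin{equation}\label{planE2}
\int_\Omega D\psi(u-f)\cdot (u^\varphi_\tau-u)\,\dd x \;\ge\; -\bigl(\cR(u^\varphi_\tau)-\cR(u)\bigr).
\end{equation}

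Next I divide~\eqref{planE2} by $\tau$ and take $\tau\to 0^\pm$ separately. The right-hand side converges in both cases to $-L(\varphi)$, where $L(\varphi) := \lim_{\tau\to 0}(\cR(u^\varphi_\tau)-\cR(u))/\tau$ exists by~\ref{Hdiff}. For the left-hand side, the BV structure of $u$ produces one-sided limits whose difference is exactly the jump contribution: the absolutely continuous and Cantor parts of $Du$ yield a contribution symmetric in $\tau$ (the same $AC(\varphi)$ from either side), while the jump part, for $\tau>0$, sweeps out a thin strip around $J_u$ of signed width $\tau\,\widetilde\varphi\,(\nu\cdot\nu_u)$ in which the shifted function changes from $u^-$ to $u^+$; a mirror picture holds for $\tau<0$. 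Careful bookkeeping with BV traces then yields
\[
\lim_{\tau\to 0^\pm} \tfrac{1}{\tau} \int_\Omega D\psi(u-f)\cdot(u^\varphi_\tau-u)\,\dd x \;=\; AC(\varphi) + \int_{J_u} D\psi(u^\mp-f^\mp)\cdot(u^+-u^-)\,\widetilde\varphi\,(\nu\cdot\nu_u)\,\dd\cH^{m-1},
\]
with $f^\pm$ the one-sided traces of $f$ along $\nu_u$ (well-defined $\cH^{m-1}$-a.e.\ on $J_u$ by the BV trace theorem and rectifiability of $J_u$, and coinciding with the usual jump values up to $\cH^{m-1}$-null sets). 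Combining the bounds $\limsup_{\tau\to 0^-}(\cdots)/\tau \le -L(\varphi) \le \liminf_{\tau\to 0^+}(\cdots)/\tau$ extracted from~\eqref{planE2}, and cancelling $AC(\varphi)$ and $L(\varphi)$, I obtain
\[
\int_{J_u}\bigl(D\psi(u^+-f^+)-D\psi(u^--f^-)\bigr)\cdot(u^+-u^-)\,\widetilde\varphi\,(\nu\cdot\nu_u)\,\dd\cH^{m-1} \;\le\; 0
\]
for all $\widetilde\varphi\in C_c^\infty(\Omega)$ and $\nu\in\bS^{m-1}$.

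Localizing at Lebesgue points of the traces $u^\pm,f^\pm,\nu_u$ on $J_u$ and choosing a countable dense family of $\nu$'s to ensure $\nu\cdot\nu_u\neq 0$ at $\cH^{m-1}$-a.e.\ $x\in J_u$, I extract the pointwise inequality
\[
\bigl(D\psi(u^+-f^+)-D\psi(u^--f^-)\bigr)\cdot(u^+-u^-) \;\le\; 0 \qquad \cH^{m-1}\text{-a.e.\ on }J_u.
\]
For $\psi\in C^2$, the fundamental theorem of calculus rewrites the left-hand side as $(u^+-u^-)\cdot A\bigl((u^+-u^-)-(f^+-f^-)\bigr)$, with $A$ as in the statement; the symmetry of $A$ then yields~\eqref{main_ineq}. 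For $\psi\in C^1$ strictly convex, at $\cH^{m-1}$-a.e.\ $x\in J_u\setminus J_f$ one has $f^+=f^-=\widetilde f(x)$ by approximate continuity of $f$ (since $\cH^{m-1}(S_f\setminus J_f)=0$), so the left-hand side becomes $\bigl(D\psi(u^+-\widetilde f)-D\psi(u^--\widetilde f)\bigr)\cdot(u^+-u^-)$, which is strictly positive by strict monotonicity of $D\psi$ because $u^+\neq u^-$; this contradiction forces $\cH^{m-1}(J_u\setminus J_f)=0$.

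The main obstacle is the BV analysis underlying the second step: rigorously identifying the one-sided limits of $\tau^{-1}\int D\psi(u-f)\cdot(u^\varphi_\tau-u)\,\dd x$, and in particular verifying that the absolutely continuous and Cantor contributions are common to both signs of $\tau$ and therefore drop out. This rests on a BV change-of-variable argument applied to the flow $T_\tau(x)=x+\tau\varphi(x)$, combined with the trace theorem on the rectifiable set $J_u$, and uses (implicitly) Alberti's rank-one structure to handle the Cantor part. Once these limits are secured, the rest---the convex-combination manipulation, the pointwise localization, and the algebraic reduction to~\eqref{main_ineq}---is essentially routine.
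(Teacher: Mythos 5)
Your proposal is essentially correct, but it reaches the key inequality \eqref{nice_ineq} by a genuinely different route than the paper, namely by a different order of limits. You linearize the fidelity first: sending $\lambda\to 0^+$ in your \eqref{planE1} gives the first-variation inequality $\int_\Omega D\psi(u-f)\cdot(u^\varphi_\tau-u)\,\dd x\ge -(\cR(u^\varphi_\tau)-\cR(u))$ for each fixed small $\tau$, and only then do you differentiate in $\tau$ from both sides. The paper instead keeps the convex-combination parameter $\vartheta$ alive through the $\tau$-limit (Lemma~\ref{lem:inner_diff} and Lemma~\ref{lem:fidelity}), localizes on a single $C^1$ graph $\Gamma_i$ with an error term $C\vartheta\,|\nu_0\cdot Du|(Q_r\setminus\Gamma)$ that is killed by blow-up at points of density, and sends $\vartheta\to 0^+$ only at the very end, after the pointwise inequality has been extracted. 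What your route buys is conceptual economy (the optimality condition $\cF'(u-f;u^\varphi_\tau-u)\ge -(\cR(u^\varphi_\tau)-\cR(u))$ is the natural starting point, and the fidelity asymptotics are computed for the fixed bounded weight $g=D\psi(u-f)$ rather than for the nonlinear increments of $\psi$); what it costs is that you need an \emph{exact} identification of both one-sided limits of $\tau^{-1}\int g\cdot(u^\varphi_\tau-u)$ on the whole support of $\varphi$, including the verification that the absolutely continuous \emph{and Cantor} contributions are symmetric in $\tau$ and cancel. That verification does go through --- by 1D slicing, $g$ restricted to a.e.\ line parallel to $\nu$ is a continuous function of a $BV$ slice, hence has one-sided limits everywhere which coincide off a countable set, and the diffuse part of $Du_{x'}$ is non-atomic; Alberti's rank-one theorem, which you invoke, is not actually needed here --- but it is precisely the step the paper's localization-before-identification strategy is designed to avoid. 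Two small points to tighten: your jump-contribution formula with the signed weight $\widetilde\varphi\,(\nu\cdot\nu_u)$ is only correct as written for $\widetilde\varphi\ge 0$ and a consistent orientation of $\nu_u$ relative to $\nu$ (otherwise the side from which the strip approaches $J_u$ flips), and the final localization should be phrased as differentiation of the locally finite signed measure $\bigl(D\psi(u^+-f^+)-D\psi(u^--f^-)\bigr)\cdot(u^+-u^-)\,(\nu\cdot\nu_u)\,\cH^{m-1}\mres J_u$ (note $\cH^{m-1}\mres J_u$ itself need not be locally finite, but the integrand is dominated by $C|u^+-u^-|$, which is). Neither issue affects the conclusion, and your endgame (strict monotonicity of $D\psi$ for the inclusion, the fundamental theorem of calculus for \eqref{main_ineq}) coincides with the paper's.
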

In \eqref{main_ineq}, the selections of $u^\pm$ and $f^\pm$ are chosen in a mutually consistent manner. Technically they are determined by a chosen orientation of the sequence of $C^1$ graphs covering $J_u$ (see Section \ref{sec:BV}), but evidently \eqref{main_ineq} does not depend on this choice. It follows from \eqref{main_ineq} that 
\begin{equation*}  
	(u^+ - u^-) \cdot A \, (u^+ - u^-) \leq (f^+ - f^-)\cdot A \, (f^+ - f^-),
\end{equation*}
which can be translated into a bound on the size of jumps of $u$ in terms of $f$. In particular in the strongly convex, Lipschitz-gradient case 
\begin{equation}\label{sconv} 
 \lambda I \leq D^2 \psi \leq \Lambda I \quad \text{with } 0< \lambda \leq \Lambda, 
\end{equation}
we obtain
\[|u^+-u^-|\le \sqrt{\Lambda/\lambda} \,|f^+-f^-|.\] 
However, \eqref{main_ineq} also carries information about the jump direction in the value space $\bR^n$.    
The proof of Theorem~\ref{thethm} is postponed after the proofs of the following two lemmas. 
As mentioned in the introduction, a crucial tool here is the
idea of~\cite{Valkonen2015} to combine inner variations with the original function.
\begin{lemma} \label{lem:inner_diff}
  Let $u$ be the minimizer of $\cE$. Suppose that the limit \eqref{inner_var_def} exists for $w=u$. For $\vartheta \in [0,1]$, we denote 
  \[u^\varphi_{\vartheta, \tau} = \vartheta u_\tau^\varphi + (1 -\vartheta) u.\]
  Then, for $\vartheta \in [0,1]$,
  \begin{equation} \label{fid_ineq}
  \liminf_{\tau \to 0^+} \tfrac{1}{\tau}(\cF(u^\varphi_{\vartheta,\tau} - f) - \cF(u - f)) + \liminf_{\tau \to 0^+} \tfrac{1}{\tau}(\cF(u^\varphi_{\vartheta,-\tau} - f) - \cF(u - f)) \geq 0.
  \end{equation} 
\end{lemma}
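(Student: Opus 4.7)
The plan is to use convexity of $\cR$ to control the first-order behavior of the regularizer along the convex combination $u^\varphi_{\vartheta,\tau}$ by that along the full inner variation $u^\varphi_\tau$, and then combine with the minimality of $u$ on both $+\tau$ and $-\tau$ perturbations to cancel the regularizer contribution. First I would note that for $\varphi \in C_c^\infty(\Omega)^n$ and $|\tau|$ small, $x\mapsto x+\tau\varphi(x)$ is a diffeomorphism, so $u^\varphi_\tau$ and hence $u^\varphi_{\vartheta,\tau}$ lie in $L^1_\loc(\Omega)^n$ and are admissible competitors. Set $L_\varphi := \lim_{\tau\to 0}\tfrac{1}{\tau}(\cR(u^\varphi_\tau) - \cR(u))$, which exists by hypothesis \ref{Hdiff}; the substitution $\tau \leftrightarrow -\tau$ then gives
\[ \lim_{\tau\to 0^+}\tfrac{1}{\tau}(\cR(u^\varphi_{-\tau}) - \cR(u)) = -L_\varphi. \]

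Next, convexity of $\cR$ applied to $u^\varphi_{\vartheta,\tau} = \vartheta u^\varphi_\tau + (1-\vartheta) u$ yields
\[ \cR(u^\varphi_{\vartheta,\tau}) - \cR(u) \leq \vartheta\bigl(\cR(u^\varphi_\tau) - \cR(u)\bigr), \]
and the same inequality holds with $\tau$ replaced by $-\tau$. Dividing by $\tau>0$ and passing to $\limsup$ in each case gives
\[ \limsup_{\tau\to 0^+}\tfrac{1}{\tau}(\cR(u^\varphi_{\vartheta,\tau}) - \cR(u)) \leq \vartheta L_\varphi, \qquad \limsup_{\tau\to 0^+}\tfrac{1}{\tau}(\cR(u^\varphi_{\vartheta,-\tau}) - \cR(u)) \leq -\vartheta L_\varphi, \]
so the two $\limsup$'s sum to at most zero.

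Finally, the minimality of $u$ in \eqref{eq:mainpb} gives $\cF(u^\varphi_{\vartheta,\pm\tau}-f) - \cF(u-f) \geq -(\cR(u^\varphi_{\vartheta,\pm\tau}) - \cR(u))$; dividing by $\tau>0$, taking $\liminf$, and using $\liminf(-g) = -\limsup g$ converts the bound from the previous step directly into \eqref{fid_ineq}.

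The main obstacle is not mathematical depth but sign and inequality bookkeeping: one must keep straight that convexity produces a $\limsup$ bound on the regularizer side, while minimality produces a $\liminf$ bound on the fidelity side, and that the $+\tau$ and $-\tau$ contributions to $\cR$ cancel precisely thanks to the existence of the (two-sided) limit \eqref{inner_var_def}. It is exactly the introduction of the convex-combination parameter $\vartheta$ — Valkonen's trick — that makes this cancellation possible without any further one-sided regularity of $\cR$ or a direct computation of $\tfrac{d}{d\tau}\cR(u^\varphi_{\vartheta,\tau})$.
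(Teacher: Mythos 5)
Your proof is correct and follows essentially the same route as the paper's: convexity of $\cR$ to bound $\cR(u^\varphi_{\vartheta,\pm\tau})-\cR(u)$ by $\vartheta(\cR(u^\varphi_{\pm\tau})-\cR(u))$, the two-sided existence of the limit \eqref{inner_var_def} to make the $+\tau$ and $-\tau$ regularizer contributions cancel to $\pm\vartheta R_\varphi'(0)$, and minimality of $u$ to transfer the bound to the fidelity term before summing. The only cosmetic difference is that the paper keeps the inequality in the form $0\le \pm\vartheta R_\varphi'(0)+\liminf_{\tau\to 0^+}\tfrac1\tau(\cF(u^\varphi_{\vartheta,\pm\tau}-f)-\cF(u-f))$ rather than isolating the $\limsup$ of the regularizer quotient, which is an equivalent bookkeeping choice.
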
 
\begin{proof} 
  By minimality,
  \[0 \leq \liminf_{\tau \to 0^+}\tfrac{1}{\tau}(\cE(u^\varphi_{\vartheta, \pm \tau}) - \cE(u) ) = \liminf_{\tau \to 0^+}\tfrac{1}{\tau}(\cR(u^\varphi_{\vartheta, \pm\tau}) - \cR(u) +\cF(u^\varphi_{\vartheta,\pm\tau} - f) - \cF(u - f)).\]
  By our assumption, the function $R_\varphi\colon ]\!-\!\tau_0, \tau_0[\to [0, \infty[$ defined by 
  \[R_\varphi(\tau) = \cR(u^\varphi_\tau)\]
  for $\tau_0$ small enough is differentiable at $\tau = 0$. Thus, by convexity of $\cR$, 
  \[\tfrac{1}{\tau}(\cR(u^\varphi_{\vartheta, \pm \tau}) - \cR(u)) \leq \tfrac{\vartheta}{\tau}(\cR(u^\varphi_{\pm\tau}) - \cR(u)) \to \pm \vartheta R_\varphi'(0) \quad \text{as } \tau \to 0.\]
  Therefore, 
  \begin{equation*} 
    0 \leq \pm \vartheta R_\varphi '(0) + \liminf_{\tau \to 0^+} \tfrac{1}{\tau}(\cF(u^\varphi_{\vartheta,\pm\tau} - f) - \cF(u - f)). 
  \end{equation*} 
  We conclude by summing together the two obtained inequalities. 
\end{proof}
We note that the Lemma \ref{lem:inner_diff} is the only place in the proof of Theorem \ref{thethm} where we use convexity of $\cR$ (or differentiability of $\cR$ for that matter). In what follows, we will work directly with inequality \eqref{fid_ineq}. Thus, we could drop the convexity hypothesis altogether and instead assume explicitly that $\cR$ is differentiable along \emph{mixed variations} $u^\varphi_{\vartheta,\tau}$ (with fixed $\vartheta \in [0,1]$), leading directly to \eqref{fid_ineq}. In this way it might be possible to treat lower order perturbations of convex regularizers or the case of quasiconvex integrands, etc. 
However, we do not know simple and natural examples for which it is clear
that such differentiability holds---one may check in particular that the celebrated
Mumford--Shah functional~\cite{mum-shah} is not differentiable along
mixed variations near jump points.
In fact, also our proof of differentiability for concrete regularizers (Theorem \ref{thm:diff_ex}) uses the duality formula for convex integrands. Thus, we decided to keep the simpler assumption of differentiability along inner variations in the statement of our main result and leave the discussion of non-convex regularizers to a possible future work.  

We point out that differentiability along inner variations can roughly be seen as a relaxation of the \emph{double-Lipschitz comparability} of \cite{Valkonen2015}. The latter condition does not explicitly imply differentiability of $\cR$, instead imposing a certain quantitative upper bound on the quantity $\cR(w^\varphi_\tau) - 2\cR(w) + \cR(w^\varphi_{-\tau})$ for a general class of Lipschitz inner variations $\varphi$.    However, in the case that $\cR$ is convex, the difference quotients $\tfrac{1}{\tau}(\cR(w^\varphi_{\pm\tau}) - \cR(w))$ are non-decreasing functions of $\tau$, and their one-sided limits $R_\pm'$ as $\tau \to 0^+$ exist. Moreover, the sum of the two difference quotients is non-negative. On the other hand, by \cite[Lemma 6.5]{Valkonen2015}, double-Lipschitz comparability implies
\begin{equation}\label{dLc}
    0 \leq \tfrac{1}{\tau}(\cR(w^\varphi_\tau) - \cR(w)) + \tfrac{1}{\tau}(\cR(w^\varphi_{-\tau}) - \cR(w)) \leq C\tau \quad \text{for } \tau >0
\end{equation}
with $C>0$ for the particular (Lipschitz) directional variations $\varphi$ considered there. Thus, not only $R_+' = - R_-'$, but also we can write 
\begin{multline*}
    0 \leq \left|\tfrac{1}{\tau}(\cR(w^\varphi_\tau) - \cR(w)) - R_+'\right|+ \left|\tfrac{1}{\tau}(\cR(w^\varphi_{-\tau}) - \cR(w)) - R_-'\right| \\
    = \tfrac{1}{\tau}(\cR(w^\varphi_\tau) - \cR(w)) - R_+' + \tfrac{1}{\tau}(\cR(w^\varphi_{-\tau}) - \cR(w)) - R_-'
    \leq C\tau,
\end{multline*} 
i.\,e.\ not only the limit \eqref{inner_var_def} exists, but also one has an $O(\tau)$ estimate on the error term.

In the next lemma we investigate asymptotic behavior of the fidelity term under particular inner variations that push the values of $u$ from one side of a jump discontinuity to the other one (see Figure \ref{fig:push}). It can be seen as a variant of~\cite[Lemma~6.2]{Valkonen2015} and relies on a similar use of the properties of $BV$ functions
near a jump point. We note that minimality of $u$ is not used in the proof.  

Before stating the lemma, let us recall that given measurable spaces $X$, $Y$, a measure $\mu$ on $X$ and a measurable function $\Phi \colon X \to Y$, the formula $\Phi_\# \mu(A) := \mu(\Phi^{-1}(A))$ defines a measure $\Phi_\# \mu$ on $Y$ called the \emph{pushforward of $\Phi$ by $f$}. For any measurable function $f$ on $Y$ such that $f \circ \Phi$ is integrable w.\,r.\,t.\ $\mu$, $f$ is integrable w.\,r.\,t.\ $\Phi_\# \mu$ and we have 
\[\int f\, \dd \Phi_\# \mu = \int f \circ \Phi\, \dd \mu. \]

In the case that $\gamma \colon B^{m-1}_{r_0}(x_0, \nu_0) \to Q_{r_0}(x_0, \nu_0)$ is a graphical parametrization of a $C^1$ curve $\Gamma\subset \Omega$, we have by the area formula (where $\mathfrak{J}_\gamma = \sqrt{\det D\gamma^T D\gamma}$ is the Jacobian of $\gamma$) 
\begin{equation} \label{graph_push} 
\int_\Gamma f\, \dd \gamma_\# \cL^{m-1} = \int_{B^{m-1}_{r_0}(x_0, \nu_0)} \!\!\!\!\! f \circ \gamma \, \dd \cL^{m-1} = \int_\Gamma \frac{f}{\mathfrak{J}_\gamma\circ \gamma^{-1}}  \dd \cH^{m-1} = \int_\Gamma \frac{f}{\sqrt{1 + |D \widetilde{\gamma}\circ \gamma^{-1}|^2}}  \dd \cH^{m-1},
\end{equation} 
with $\widetilde{\gamma}$ such that $\gamma(x') = x' + \widetilde{\gamma}(x') \nu_0$. Thus we can write $\dd \gamma_\# \cL^{m-1} = \frac{1}{\sqrt{1 + |D \widetilde{\gamma}\circ\gamma^{-1}|^2}} \dd \cH^{m-1}\mres \Gamma$. 

\begin{figure}[h]
\begin{center}
  \includegraphics[width=.6\textwidth]{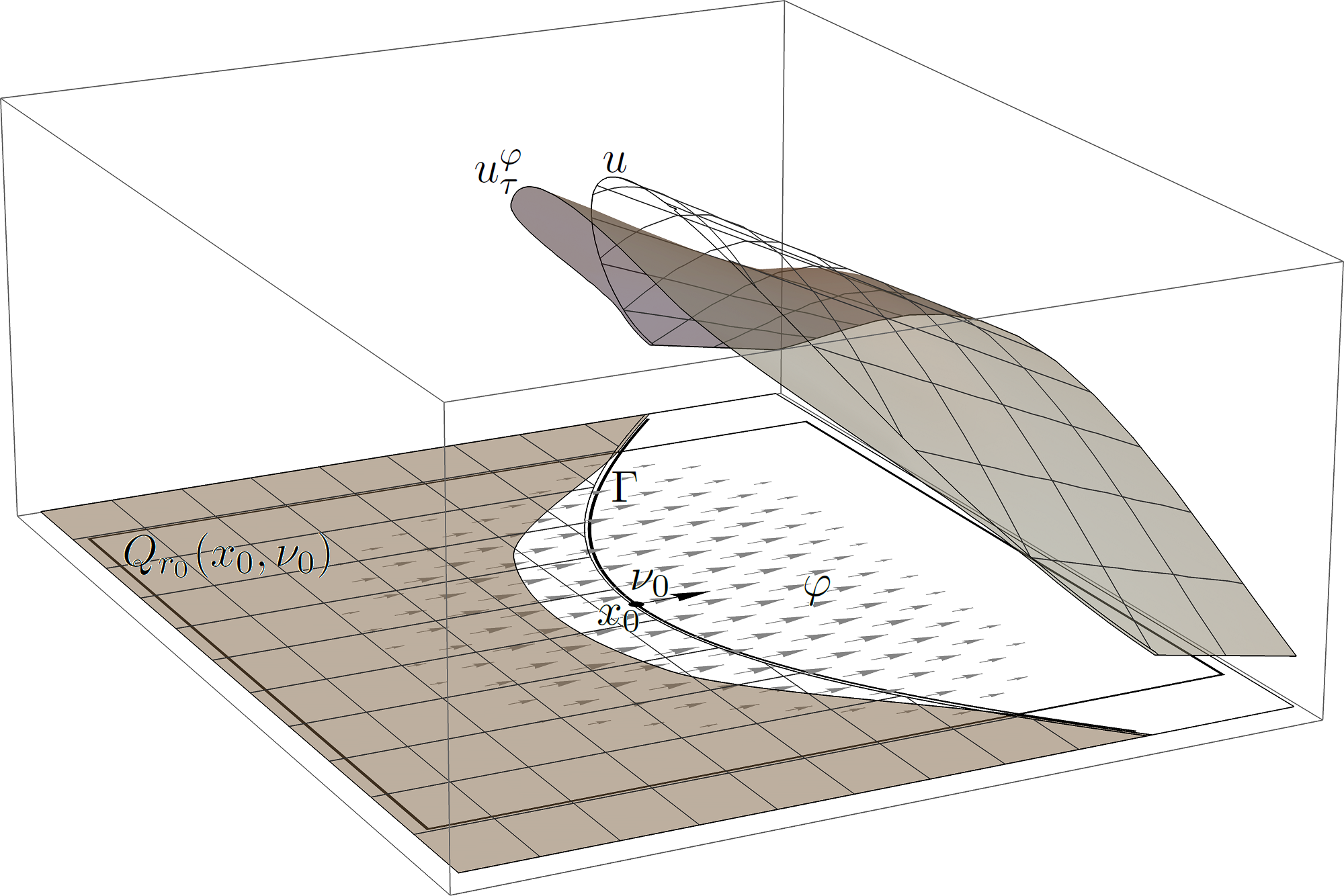}
\caption{Graphs of $u$ and $u_\tau^\varphi$ for given BV function $u$ with a jump discontinuity along a $C^1$ graph $\Gamma$, vector field $\varphi$ satisfying the assumptions of Lemma \ref{lem:fidelity}, and $\tau>0$.}\label{fig:push}
\end{center}
\end{figure}

\begin{lemma} \label{lem:fidelity}
  Assume \ref{Hbv} and \ref{Hbdd} hold. Let $\Gamma \subset Q_{r_0}(x_0, \nu_0) \subset \Omega$ be a $C^1$ graph admitting a graphical parametrization $\gamma \colon B^{m-1}_{r_0}(x_0, \nu_0) \to Q_{r_0}(x_0, \nu_0)$. Let $\varphi \in C_c^\infty(\Omega)^m$ be such that the support of $\varphi$ is contained in $Q_r(x_0, \nu_0)$, $0<r\leq r_0$, and $\varphi = \nu_0 \, \widetilde{\varphi}$,  $\widetilde{\varphi} \in C^\infty_c(\Omega)$. Moreover, assume that $0 \leq \widetilde{\varphi} \leq 1$ and restrictions of $\widetilde{\varphi}$ to lines parallel to $\nu_0$ attain their maxima on $\Gamma$. For simplicity we also assume that 
  \begin{equation}\label{phi_simple} 
    \nu_0 \cdot D\widetilde{\varphi}(x) = 0 \quad  
     \forall x \in  \Gamma + [-\varepsilon,\varepsilon]\nu_0, \quad\text{ for some } \varepsilon >0. 
  \end{equation}
  Then
  \begin{multline*} 
    \limsup_{\tau \to 0^+}\tfrac{1}{\tau}  (\cF(u^\varphi_{\vartheta, \tau} - f) - \cF(u - f)) \\ \leq \int_{\Gamma} \widetilde{\varphi} \left( \psi(\vartheta u^+ + (1- \vartheta) u^- - f^-) - \psi(u^- - f^-)\right)\dd \gamma_{\#} \cL^{m-1}  + C \vartheta \left| \nu_0 \cdot \D u\right|(Q_r(x_0,\nu_0)\setminus \Gamma), 
  \end{multline*} 
  \begin{multline*} 
    \limsup_{\tau \to 0^+}\tfrac{1}{\tau}  (\cF(u^\varphi_{\vartheta, -\tau} - f) - \cF(u - f)) \\ \leq \int_{\Gamma} \widetilde{\varphi} \left( \psi(\vartheta u^- + (1- \vartheta) u^+ - f^+) - \psi(u^+ - f^+)\right)\dd \gamma_{\#} \cL^{m-1}  + C \vartheta \left| \nu_0 \cdot \D u\right|(Q_r(x_0,\nu_0)\setminus \Gamma),  
  \end{multline*} 
  where $\gamma_{\#} \cL^{m-1}$ denotes the pushforward of $\cL^{m-1}$ by $\gamma$ and 
  \begin{equation} \label{C_def}
      C := \sup \left\{ |D \psi(\xi)|\colon \xi \in \bR^n, \ |\xi| \leq \|u\|_{L^\infty(Q_{r_0}(x_0,\nu_0))^n} + \|f\|_{L^\infty(Q_{r_0}(x_0,\nu_0))^n} \right\}
  \end{equation} 
  is finite by virtue of \ref{Hbdd}. 
\end{lemma}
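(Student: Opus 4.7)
My plan is to reduce the proof to a one-dimensional slicing argument along lines parallel to $\nu_0$, exploiting the fact that $\varphi=\widetilde{\varphi}\nu_0$ is purely vertical and that $\Phi_\tau(x):=x+\tau\varphi(x)$ therefore preserves each vertical line $L_{x'}:=\{x'+t\nu_0:t\in\bR\}$, acting on it by the scalar map $t\mapsto t+\tau\widetilde{\varphi}_{x'}(t)$ with $\widetilde{\varphi}_{x'}(t):=\widetilde{\varphi}(x'+t\nu_0)$. Using adapted coordinates $(x',t)\in B^{m-1}_{r_0}(x_0,\nu_0)\times(-r_0,r_0)$ on $Q_{r_0}(x_0,\nu_0)$, Fubini gives
\begin{equation*}
\cF(u^\varphi_{\vartheta,\tau}-f)-\cF(u-f)=\int_{B^{m-1}_{r_0}(x_0,\nu_0)}I_\tau(x')\,\dd x',
\end{equation*}
where $I_\tau(x')$ is the $t$-integral of $\psi\bigl(\vartheta u_{x'}(t+\tau\widetilde{\varphi}_{x'}(t))+(1-\vartheta)u_{x'}(t)-f_{x'}(t)\bigr)-\psi(u_{x'}(t)-f_{x'}(t))$, using the shorthand $u_{x'}(t):=u(x'+t\nu_0)$ and likewise for $f_{x'}$. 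By the BV slicing theorem, for $\cL^{m-1}$-a.e.~$x'$ the sections $u_{x'}$, $f_{x'}$ lie in $BV_\loc(\bR)^n$, and their one-sided limits at $t=\widetilde{\gamma}(x')$ coincide with the $\Gamma$-traces $u^\pm(\gamma(x'))$, $f^\pm(\gamma(x'))$.

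Set $p(x'):=\widetilde{\varphi}(\gamma(x'))$. Assumption \eqref{phi_simple} forces $\widetilde{\varphi}_{x'}(t)=p(x')$ throughout the vertical slab $|t-\widetilde{\gamma}(x')|<\varepsilon$, so for $\tau\|\widetilde{\varphi}\|_\infty<\varepsilon$ I split $I_\tau(x')$ into a contribution from the critical strip $E_\tau(x'):=[\widetilde{\gamma}(x')-\tau p(x'),\widetilde{\gamma}(x')]$ and its complement. On $E_\tau(x')$ the shifted argument $t+\tau p(x')$ sits on the $+$ side of $\Gamma$ while $t$ stays on the $-$ side, so Lebesgue differentiation for the one-sided approximate limits of $u_{x'},f_{x'}$ at $\widetilde{\gamma}(x')$, combined with the local Lipschitzness of $\psi$ on bounded sets (with constant $C$ from \eqref{C_def}, finite by \ref{Hbdd}), yields that the critical-strip contribution divided by $\tau$ converges a.e.\ to $p(x')\bigl[\psi(\vartheta u^++(1-\vartheta)u^--f^-)-\psi(u^--f^-)\bigr]$ evaluated at $\gamma(x')$. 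On the complement $A_\tau(x'):=(-r_0,r_0)\setminus E_\tau(x')$ the shift does not cross $\widetilde{\gamma}(x')$, so using the identity $u^\varphi_{\vartheta,\tau}-u=\vartheta(u\circ\Phi_\tau-u)$, the Lipschitz bound for $\psi$, and the elementary estimate $|u_{x'}(t+c)-u_{x'}(t)|\le|Du_{x'}|([t,t+c])$ together with Fubini, I bound the complement contribution divided by $\tau$ by $C\vartheta\,\|\widetilde{\varphi}\|_\infty\,|Du_{x'}|(\{t:x'+t\nu_0\in Q_r(x_0,\nu_0)\setminus\Gamma\})$ up to a $\tau$-vanishing error (because one checks that intervals $[t,t+\tau\widetilde{\varphi}_{x'}(t)]$ for $t\in A_\tau(x')$ stay clear of $\widetilde{\gamma}(x')$).

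To conclude, $\tfrac{1}{\tau}|I_\tau(x')|$ is uniformly dominated by the $\cL^{m-1}$-integrable function $C\vartheta\,\|\widetilde{\varphi}\|_\infty\,|Du_{x'}|(\bR)$ by the BV slicing identity relating $\int|Du_{x'}|(\cdot)\,\dd x'$ to $|\nu_0\cdot Du|(\cdot)$; reverse Fatou then passes the $\limsup$ under the outer integral. The main term reassembles via the area formula \eqref{graph_push} to $\int_\Gamma\widetilde{\varphi}\bigl[\psi(\vartheta u^++(1-\vartheta)u^--f^-)-\psi(u^--f^-)\bigr]\dd\gamma_\#\cL^{m-1}$, and the error integrates to $C\vartheta\,|\nu_0\cdot Du|(Q_r(x_0,\nu_0)\setminus\Gamma)$, establishing the first claimed inequality; the second is obtained by rerunning the argument with $\tau\to-\tau$, which exchanges the roles of the $+$ and $-$ sides. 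The main obstacle I anticipate lies in the bookkeeping of the error estimate: one must verify that the sliced 1D total variations $|Du_{x'}|$ integrate back up exactly to the directional measure $|\nu_0\cdot Du|$ (not larger, despite the vector-valued nature of $u$), and that the auxiliary intervals $[t,t+\tau\widetilde{\varphi}_{x'}(t)]$ in the non-critical region never sweep the jump at $\widetilde{\gamma}(x')$, so that only mass from $Q_r\setminus\Gamma$ is charged.
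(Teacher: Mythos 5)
Your proposal is correct and follows essentially the same route as the paper's proof: the same vertical slicing in adapted coordinates, the same split into the critical strip $[\widetilde{\gamma}(x')-\tau\widetilde{\varphi}(\gamma(x')),\widetilde{\gamma}(x')]$ and its complement, the same Lipschitz-plus-$|Du_{x'}|$ increment bound with Fubini for the error term, the same one-sided trace convergence with dominated convergence for the main term, and the same reassembly via the slicing theorem and the area formula. The two ``obstacles'' you flag are exactly the points the paper handles via \cite[Theorems 3.107--3.108]{afp} and the observation that for $t$ outside the critical strip the interval $]t,t+\tau\widetilde{\varphi}_{x'}(t)[$ avoids $\widetilde{\gamma}(x')$ because $\widetilde{\varphi}_{x'}$ is maximal there.
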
 

\begin{proof} 
  By an isometric change of coordinates, we will assume that $\nu_0 = e_m$, $x_0 = 0$ and denote $x = (x', x_m)$, $Q_r(x_0,\nu_0) = Q_r$, $B^{m-1}_r(x_0,\nu_0) = B^{m-1}_r$, $\gamma(x') = (x', \widetilde{\gamma}(x'))$ for $x' \in B^{m-1}_r$, so that $\Gamma = \{\gamma(x')\colon x' \in B^{m-1}_r\}$. By our assumption, we have 
  \[x + \tau \varphi(x) = (x', x_m + \tau \widetilde{\varphi}(x)).\]
  We will prove the first part of the assertion. The proof of the second one is the same. We take $\tau > 0$ small enough so that the maps $x \mapsto x \pm \tau \varphi(x)$ are diffeomorphisms. In particular, $x_m \mapsto x_m \pm \tau \widetilde{\varphi}(x', x_m)$ are diffeomorphisms for every $x' \in B^{m-1}_r$. By \eqref{phi_simple}, we can also assume
  \begin{equation}\label{phi_const} \widetilde{\varphi}(x', x_m) = \widetilde{\varphi}(\gamma(x')) \quad \text{whenever } - \tau \widetilde{\varphi}(x', x_m) \leq x_m - \widetilde{\gamma}(x') \leq \tau \widetilde{\varphi}(x', x_m). 
  \end{equation} 
  We rewrite   
  \begin{multline*} 
    \cF(u^\varphi_{\vartheta, \tau} - f) - \cF(u - f) = \int_{B^{m-1}_r}\int_{\widetilde{\gamma}(x') - \tau \widetilde{\varphi}(\gamma(x'))}^{\widetilde{\gamma}(x')} \psi(u^\varphi_{\vartheta, \tau} - f) - \psi(u - f)\, \dd x_m\, \dd x' \\
    + \int_{B^{m-1}_r}\int_{]\!-\!r,r[\setminus[\widetilde{\gamma}(x') - \tau \widetilde{\varphi}(\gamma(x')),\widetilde{\gamma}(x')]} \psi(u^\varphi_{\vartheta, \tau} - f) - \psi(u - f)\, \dd x_m\, \dd x' =: I^1_{\vartheta, \tau} + I^2_{\vartheta, \tau}.
  \end{multline*} 
  
  We first estimate $I^2_{\vartheta, \tau}$. Identifying $u$ with its precise representative, the slicing properties of $BV$ functions~\cite[\S 3.11, Theorem 3.107]{afp} ensure that for $\cL^{m-1}$-a.\,e.\ $x' \in B^{m-1}_r$ the function $u_{x'} \colon x_m\mapsto u(x',x_m)$ is in $BV(]\!-r,r[)^n$, and we can write for 
  $\cL^1$-a.\,e.~$x_m \in ]\!- r, r[$:
  \begin{multline*} u^\varphi_{\vartheta, \tau}(x',x_m) - u(x', x_m) = \vartheta \left(u^\varphi_\tau(x',x_m) -  u(x', x_m)\right)  \\
  =\vartheta\left( u_{x'}(x_m + \tau \widetilde{\varphi}(x', x_m)) - u_{x'}(x_m)\right) =\vartheta\, Du_{x'} \left(\left]x_m,x_m + \tau \widetilde{\varphi}(x', x_m)\right[\right).
  \end{multline*} 
  Therefore
\begin{multline}\label{psi_diff_est}
\left|\psi(u^\varphi_{\vartheta, \tau} - f) - \psi(u - f)\right|(x', x_m) 
      = \left|\int_0^1 D\psi(u+t(u^\varphi_{\vartheta, \tau}-u) - f) \dd t \,\cdot(u^\varphi_{\vartheta, \tau}-u)\right|(x', x_m)  \\
         \leq C \vartheta \left|Du_{x'} \right|\left(\left]x_m,x_m + \tau \widetilde{\varphi}(x', x_m)\right[\right) 
         \leq C \vartheta \left|Du_{x'}\right|\left(\left]x_m,\min\{r,x_m + \tau \widetilde{\varphi}(\gamma(x'))\}\right[\right), \end{multline}
      where $C$ is defined in \eqref{C_def} and we have used that $\widetilde{\varphi}(x',\cdot)$ is maximal at $\widetilde{\gamma}(x')$.
	Thus, for $\cL^{m-1}$-a.\,e.\ $x' \in B^{m-1}_r$,
 \begin{multline*} 
		\frac{1}{\tau}\int_{]\!-\!r,r[\setminus[\widetilde{\gamma}(x') - \tau \widetilde{\varphi}(\gamma(x')),\widetilde{\gamma}(x')]}\left|\psi(u^\varphi_{\vartheta, \tau} - f) - \psi(u - f)\right| \dd x_m
  \\  \leq C \frac{\vartheta}{\tau} \int\int \chi_{]\!-\!r,r[\setminus[\widetilde{\gamma}(x') - \tau \widetilde{\varphi}(\gamma(x')),\widetilde{\gamma}(x')]}(x_m) \chi_{]x_m,\min\{r,x_m + \tau \widetilde{\varphi}(\gamma(x'))\}[}(t) \,\dd \big|Du_{x'}\big|(t)\dd x_m.
	\end{multline*}
  The integrand is non-zero only when $x_m < t < \min\{r,x_m + \tau \widetilde{\varphi}(\gamma(x'))\}$ and either $-r < x_m < \widetilde{\gamma}(x')-\tau\widetilde{\varphi}(\gamma(x'))$
 or $\widetilde{\gamma}(x') < x_m\le r$, in particular one has $t\in ]\!-r,r[\setminus \{\widetilde{\gamma}(x')\}$ and
 $t-\tau\widetilde{\varphi}(\gamma(x'))\le x_m\le t$. Using Fubini's theorem, we
 deduce that this expression is bounded by
 \[ C\vartheta\widetilde\varphi(\gamma(x')) |Du_{x'}|(]\!-\!r,r[\setminus \{\widetilde{\gamma}(x')\}).
 \]
	Appealing to the slicing formula from \cite[Theorem 3.107]{afp}, 
	\begin{equation*}\frac{1}{\tau}\int_{B^{m-1}_r}\int_{]\!-\!r,r[\setminus[\widetilde{\gamma}(x') - \tau \widetilde{\varphi}(\gamma(x')),\widetilde{\gamma}(x')]}
    \hspace{-1ex}
 \psi(u^\varphi_{\vartheta, \tau} - f) - \psi(u - f)\, \dd x_m\, \dd x'  \leq C \vartheta |\nu_0\cdot\D u|(Q_r\setminus \Gamma).\end{equation*}

It remains to pass to the limit $\tau \to 0^+$ in $I^1_{\vartheta, \tau}$. By \cite[Theorem 3.108]{afp}, we have $u(x', s) \to u^\pm(\gamma(x'))$ as $s \to \gamma(x')^\pm$ for $\cL^{m-1}$-a.\,e.\ $x' \in B^{m-1}_r$, where we recall that we have chosen $u^-$, $f^-$ (resp.\ $u^+$, $f^+$) to be the approximate limits corresponding to traces of $u$, $f$ along $\Gamma$ ``from below'' (resp.\ ``from above''), consistently with the choice of $u^\pm(x_0)$ given by $\nu_0$. Thus, using \eqref{phi_const} to deal with $u^\varphi_{\vartheta, \tau}$, for $\cL^{m-1}$-a.\,e.\ $x' \in B^{m-1}_r$ we have as $\tau\to 0$:
\begin{equation*}
    \frac{1}{\tau}\int_{\widetilde{\gamma}(x') - \tau \widetilde{\varphi}(\gamma(x'))}^{\widetilde{\gamma}(x')}\, \psi(u^\varphi_{\vartheta, \tau} - f)\, \dd x_m 
    \to \widetilde{\varphi}\, \psi(\vartheta u^+ + (1-\vartheta) u^- - f^-)\bigg|_{\gamma(x')},
  \end{equation*}
  \begin{equation*}
    \frac{1}{\tau}\int_{\widetilde{\gamma}(x') - \tau \widetilde{\varphi}(\gamma(x'))}^{\widetilde{\gamma}(x')} \psi(u - f)\, \dd x_m \to \widetilde{\varphi}\, \psi(u^- - f^-)\bigg|_{\gamma(x')}. 
  \end{equation*} 
  Hence, the integrand of $I^1_{\vartheta, \tau}$ converges $\cL^{m-1}$-a.\,e. Using \eqref{psi_diff_est} and reasoning as before, we obtain a rough estimate 
  \begin{multline}
  \frac{1}{\tau}\left|\int_{\widetilde{\gamma}(x') - \tau \widetilde{\varphi}(\gamma(x'))}^{\widetilde{\gamma}(x')} \psi(u^\varphi_{\vartheta, \tau} - f) - \psi(u - f)\, \dd x_m\right| \\ \leq
  C \frac{\vartheta}{\tau} \int\int \chi_{[\widetilde{\gamma}(x') - \tau \widetilde{\varphi}(\gamma(x')),\widetilde{\gamma}(x')]}(x_m) \chi_{]x_m,\min\{r,x_m + \tau \widetilde{\varphi}(\gamma(x'))\}[}(t) \,\dd \big|Du_{x'}\big|(t)\dd x_m \\ \leq C\vartheta\widetilde\varphi(\gamma(x')) |Du_{x'}|(]\!-\!r,r[)
  \end{multline}
  for $x' \in B^{m-1}_r$. Since the r.h.s.\ is an integrable function (again by \cite[Theorem 3.107]{afp}), we can invoke the dominated convergence theorem to compute the limit $\lim_{\tau\to 0}I^1_{\vartheta, \tau}$. Recalling the first equality in \eqref{graph_push}, we rephrase the resulting integral in terms of the pushforward and conclude the proof of the Lemma. 
\end{proof} 
	
\begin{proof}[Proof of Theorem \ref{thethm}]
Let $(\Gamma_i)_{i=1}^\infty$ be a sequence of $C^1$ graphs that covers $J_u$ up to a $\cH^{m-1}$-null set. Let us fix an index $i$. By \cite[eq.~(2.41) on p.~79]{afp}, for $\cH^{m-1}$-a.\,e.~$x_0$ in $J_u\cap\Gamma_i$
\begin{equation}\label{valk_density}
  \Theta^{m-1}(|\D u|\mres (\Omega\setminus \Gamma_i),x_0) = 0.
\end{equation}
We choose such an $x_0$, and assume in addition that $x_0$ is a Lebesgue point for $u^{\pm}$ and $f^{\pm}$ with respect to $\cH^{m-1} \mres \Gamma_i$. We take $\nu_0 = \nu_u(x_0)$. For $r_0>0$ small enough $Q_{r_0}(x_0, \nu_0) \subset \Omega$ and $\Gamma := \Gamma_i \cap Q_{r_0}(x_0, \nu_0)$ has a graphical parametrization $\gamma \colon B^{m-1}_{r_0}(x_0, \nu_0) \to Q_{r_0}(x_0, \nu_0)$. Moreover, since $\Gamma$ is tangent to $x_0 +\nu_0^\perp$ at $x_0$, possibly decreasing $r_0$, we can assume that $\gamma(x') \in x' + [-r/2, r/2]\nu_0$ for $x' \in B^{m-1}_r(x_0, \nu_0)$, $0<r < r_0$ and construct a sequence of $\widetilde{\varphi}$ satisfying the assumptions of Lemma \ref{lem:fidelity} that converges to $1$ on $Q_r(x_0, \nu_0)\cap \Gamma$. 
	Then, by Lemmas~\ref{lem:inner_diff} and \ref{lem:fidelity}, 
	\begin{multline*}
		0 \leq \int_{Q_r(x_0,\nu_0) \cap \Gamma}\!\!\!
  \psi(\vartheta u^+ + (1- \vartheta) u^- - f^-) - \psi(u^- - f^-) \,\dd \gamma_{\#} \cL^{m-1} \\ + \int_{Q_r(x_0,\nu_0) \cap \Gamma}\!\!\! \psi(\vartheta u^- + (1- \vartheta) u^+ - f^+) - \psi(u^+ - f^+)\, \dd \gamma_{\#} \cL^{m-1}  + 2 C \vartheta \left| \nu_0 \cdot \D u\right|(Q_r(x_0,\nu_0)\setminus \Gamma). 
	\end{multline*}
	Dividing 
 by 
 $\cL^{m-1}(B^{m-1}_r)\sim r^{m-1}$ and passing to the limit $r \to 0^+$, we get 
	\[0 \leq \psi(\vartheta u^+ + (1- \vartheta) u^- - f^-) - \psi(u^- - f^-) +  \psi(\vartheta u^- + (1- \vartheta) u^+ - f^+) - \psi(u^+ - f^+) \]
	at $x_0$ by \eqref{valk_density}. By convexity of $\psi$, 
	\[\psi(\vartheta u^+ + (1- \vartheta) u^- - f^-) - \psi(u^- - f^-) \leq \vartheta D \psi(\vartheta u^+ + (1- \vartheta) u^- - f^-) \cdot (u^+ - u^-),\]
	\[\psi(\vartheta u^- + (1- \vartheta) u^+ - f^+) - \psi(u^+ - f^+) \leq \vartheta D \psi(\vartheta u^- + (1- \vartheta) u^+ - f^+) \cdot (u^- - u^+).\]
	Summing up, 
	\begin{equation*}
          0 \leq \vartheta \left(D \psi(\vartheta u^+ + (1- \vartheta) u^- - f^-) - D \psi(\vartheta u^- + (1- \vartheta) u^+ - f^+)\right) \cdot (u^+ - u^-) .
	\end{equation*} 
	Dividing the inequality by $\vartheta$ and letting $\vartheta \to 0^+$ yields
	\begin{equation} \label{nice_ineq}
		0 \leq \left(D \psi(u^- - f^-) - D \psi(u^+ - f^+)\right) \cdot (u^+ - u^-).
	\end{equation}
        If $f^+=f^-$ and $\psi$ is strictly convex, then
        $D\psi$ is strictly monotone and  we get
        a contradiction unless $u^+=u^-$: this shows that $\cH^{m-1}(J_u\setminus J_f)=0$.
        On the other hand, if $\psi \in C^2$, applying the fundamental theorem of calculus to the function 
        \[s \mapsto D \psi \left(u^- - f^- + s(u^+ - f^+ - u^- + f^-)\right) \cdot (u^+ - u^-),\]
        and using \eqref{nice_ineq} we obtain \eqref{main_ineq}. 

\end{proof} 
    \begin{remark}\label{remthethm}
    Assumption~\textit{\ref{Hbv}} will hold as soon as $\cR$ is coercive
    in $BV(\Omega)^n$; the next section~\ref{sec:DiffReg} discusses
    a class of such regularizers for which \textit{\ref{Hdiff}} holds as well.
    As for \textit{\ref{Hbdd}}, it is trivially satisfied if $\psi$ is globally
    Lipschitz. Otherwise, we need to ensure that $u$ and $f$ are locally bounded. Assuming that $f\in L^\infty(\Omega)^n$, a minimizer of $\cE$ will be bounded
    if $n=1$ and $\psi$ is coercive, while if $n\ge 1$ this requires
    various types of assumptions on $\psi$ and $\cR$, see Section~\ref{sec:Bounded}. Alternatively, we can enforce it by considering a constrained minimization problem, which amounts to replacing $\cR$ with 
    \[\cR_{\K} :=\cR + \iota_{\{w \in L^1_\loc(\Omega)^n \colon w(x) \in \K \text{ for  a.\,e.\ } x \in \Omega\}}
    \]
    where $\K$ is bounded, closed and convex (here $\iota_U$ denotes the convex-analytic characteristic function of $U \subset L^1_\loc(\Omega)^n$, which is 0 in $U$ and $\infty$ outside). We observe that if $\cR$ satisfies \textit{\ref{Hdiff}}, then $\cR_{\K}$ satisfies it as well. Moreover, if $\cR$ is coercive in $BV(\Omega)^n$ (that is, $w \in BV(\Omega)^n$ whenever $\cR(w) < \infty$), then $\cR_{\K}$ is as well.  
    \end{remark}
    
    \section{Differentiable regularizers} \label{sec:DiffReg}
\subsection{General form}
    In this section we give some examples of functionals $\cR$ whose domain  contains $BV(\Omega)^n$ and that are differentiable along inner variations, in particular assumptions \textit{\ref{Hbv}} and \textit{\ref{Hdiff}} in Theorem \ref{thethm} are satisfied whenever $\cR(u) < \infty$. We assume for simplicity that $\Omega$ is bounded. Let $\varrho \colon \bR^{n \times m} \to [0, \infty[$ be convex and satisfy 
    \begin{equation} \label{lin_growth}
    \varrho(\xi) \leq C(1 + |\xi|) \quad \text{for } \xi \in \bR^{n \times m}
    \end{equation} 
    with $C >0$. For a vector Radon measure $\mu$ on $\Omega$ we denote by $\mu^{a}$ its density with respect to the Lebesgue measure on $\Omega$ and by $\mu^s$ its Lebesgue singular part, i.\,e.\ 
    \[\dd \mu = \mu^{a} \dd \cL^m + \dd \mu^s, \quad  \cL^m \perp \mu^s.\] 
    Moreover, for $\xi \in \bR^{n \times m}$ we denote 
    \[\varrho^\infty(\xi) = \lim_{t \to \infty} \tfrac{1}{t} \varrho(t \xi). \]
    Then 
    \[\dd \varrho(\mu) = \varrho(\mu^{a}) \dd \cL^m + \varrho^\infty\left(\tfrac{\mu^s}{|\mu^s|}\right) \dd |\mu^s|\]
    defines a Radon measure $\varrho(\mu)$ and 
    \begin{equation}\label{R_rho} 
    \cR(w) = \left\{\begin{array}{ll} \varrho(Dw)(\Omega) & \text{if } w \in BV(\Omega)^n,\\ 
    \infty & \text{otherwise}\end{array}\right. 
    \end{equation} 
    defines a convex functional on $L^1_\loc(\Omega)$~\cite{DemengelTemam}. At least when $\varrho$ satisfies a lower bound $\varrho(A) \geq C'|A|$ with $C' > 0$, $\cR$ is lower semicontinuous and $\cE$ attains its minimum at $u \in BV(\Omega)$. 
    \begin{thm}\label{thm:diff_ex}
    Suppose that 
    \begin{enumerate}[label=(D\arabic*)] 
    \item \label{Diffac} the function $\tau \mapsto \varrho\left(A(I + \tau B)\right)$
    is differentiable at $\tau = 0$ for any $A \in \bR^{n\times m}$ and any $B \in \bR^{m \times m}$,  
    \item \label{Diffs} the function $\tau \mapsto \varrho^\infty \left(A(I + \tau B)\right) $
    is differentiable at $\tau = 0$ for any $A \in \bR^{n\times m}$ of rank $1$ and any $B \in \bR^{m \times m}$.
    \end{enumerate} 
    Then $\cR$ is differentiable along inner variations at any $w \in \cD$.
    \end{thm}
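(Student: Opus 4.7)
The plan is to re-express $\cR(w\circ\Phi_\tau)$ (with $\Phi_\tau(x):=x+\tau\varphi(x)$) as a $\tau$-dependent integral against the \emph{fixed} reference measures $\cL^m$ and $|D^s w|$, and then differentiate at $\tau=0$ by Lebesgue's dominated convergence, using the pointwise differentiability granted by \ref{Diffac} and \ref{Diffs}.

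Step 1: For $|\tau|$ small enough, $\Phi_\tau$ is a $C^\infty$ diffeomorphism of $\Omega$, equal to the identity outside $\mathrm{supp}\,\varphi$. By the BV chain rule, the absolutely continuous part of $D(w\circ\Phi_\tau)$ at $x$ is $\nabla w(\Phi_\tau(x))(I+\tau D\varphi(x))$, while the singular part is a rank-one matrix-valued measure concentrated on $\Phi_\tau^{-1}(\mathrm{supp}\,D^s w)$, with the normal-direction factor of the density rotated by $(D\Phi_\tau)^T$. Performing the change of variables $y=\Phi_\tau(x)$ in both integrals defining $\cR(w\circ\Phi_\tau)$---and using the area formula for the tangential Jacobian on the rectifiable support of $|D^s w|$ together with the $1$-homogeneity of $\varrho^\infty$ to absorb the tangential Jacobian and normal rotation into a single right factor---one arrives at the ``Lagrangian'' identity
\[\cR(w\circ\Phi_\tau)=\int_\Omega \frac{\varrho(\nabla w(y)\,A_\tau(y))}{\det A_\tau(y)}\,dy+\int_\Omega \frac{\varrho^\infty\!\left(\tfrac{D^s w}{|D^s w|}(y)\,A_\tau(y)\right)}{\det A_\tau(y)}\,d|D^s w|(y),\]
where $A_\tau(y):=I+\tau D\varphi(\Phi_\tau^{-1}(y))$, with $A_0=I$ and $\det A_0 = 1$. (As a sanity check, specializing to $\varrho(\xi)=|\xi|$ recovers the standard formula for the deformation of the total variation.)

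Step 2: Both integrands depend smoothly on $\tau$ for each fixed $y$. Assumption \ref{Diffac} (applied with $A=\nabla w(y)$ and $B=D\varphi(y)$) gives pointwise differentiability at $\tau=0$ of the first integrand, with derivative $g(\nabla w(y),D\varphi(y))-\varrho(\nabla w(y))\,\mathrm{div}\,\varphi(y)$ where $g(A,B):=\partial_\tau \varrho(A(I+\tau B))|_0$. For the second, Alberti's rank-one theorem \cite{Alberti} ensures that $\tfrac{D^s w}{|D^s w|}(y)$ has rank one for $|D^s w|$-a.e.~$y$, so assumption \ref{Diffs} applies and yields an analogous pointwise derivative involving $\varrho^\infty$ and $g^\infty$. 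Finally, convexity of $\varrho$ combined with the linear growth \eqref{lin_growth} implies that $\varrho$ and $\varrho^\infty$ are globally Lipschitz on $\bR^{n\times m}$; hence the difference quotients of the two integrands are dominated, uniformly in small $\tau$, by $C(1+|\nabla w(y)|)$ and $C$ respectively, both integrable against $\cL^m$ and $|D^s w|$. Dominated convergence yields existence of the limit \eqref{inner_var_def}.

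The main technical obstacle is the derivation of the Lagrangian formula in Step~1 on the singular part. Tracking how the rectifiable support of $|D^s w|$ deforms under $\Phi_\tau$, how the rank-one density $a(y)\otimes\nu(y)$ transforms---the normal $\nu$ being pulled back via $(D\Phi_\tau)^T$ and the surface-measure element contributing a tangential-Jacobian factor---requires careful area-formula bookkeeping. The essential simplification is that $1$-homogeneity of $\varrho^\infty$ collapses all these factors into a single right-multiplication by $A_\tau/\det A_\tau$ in its argument; once this identity is established, the dominated-convergence argument is routine.
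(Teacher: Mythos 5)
Your overall architecture is the one the paper uses: rewrite $\cR(w\circ\Phi_\tau)$ as an integral against the fixed measures $\cL^m$ and $|D^sw|$ with the deformation pushed into the argument of $\varrho$, $\varrho^\infty$ (your ``Lagrangian'' identity is exactly formula \eqref{comp_expl}, obtained from Lemma~\ref{lem:change_var}), then differentiate under the integral sign using \ref{Diffac} for the absolutely continuous part, Alberti's rank-one theorem plus \ref{Diffs} for the singular part, and dominated convergence via the global Lipschitz bound coming from convexity and \eqref{lin_growth}. Your Step~2 matches the paper's proof essentially verbatim (modulo one glossed-over point: the integrand is $\varrho\bigl(\nabla w(y)(I+\tau D\varphi(\Phi_\tau^{-1}(y)))\bigr)$, not $\varrho\bigl(\nabla w(y)(I+\tau D\varphi(y))\bigr)$, so \ref{Diffac} does not apply directly; one must split off the $O(\tau)$ discrepancy between $D\varphi\circ\Phi_\tau^{-1}$ and $D\varphi$ and absorb it with the Lipschitz continuity of $\varrho$, as the paper does).

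The genuine gap is in your Step~1, which you yourself flag as the main obstacle. Your proposed derivation of the change-of-variables identity --- BV chain rule, then the area formula with a tangential Jacobian on ``the rectifiable support of $|D^sw|$'' --- is only meaningful for the jump part of $D^sw$. The Cantor part $D^cw$ is also carried by the singular part, it is rank one $|D^cw|$-a.e.\ only by Alberti's theorem, and it is \emph{not} concentrated on a rectifiable set carrying a surface measure: it merely vanishes on sets of finite $\cH^{m-1}$-measure. There is no tangential Jacobian and no area formula to invoke there, so your bookkeeping does not establish the identity on the Cantor part, which is precisely where the formula is least obvious. The paper's Lemma~\ref{lem:change_var} is designed to avoid this: it never decomposes $D(w\circ F)$ geometrically, but instead uses the Demengel--Temam dual representation \eqref{dual_rep} of $\int\varphi\,\dd\varrho(\mu)$ together with strict approximation of $w$ by smooth functions, so that the transformation rule is verified only on test integrals $\int h\circ F\cdot\dd D(w\circ F)$ and then transferred to $\varrho$ by taking suprema. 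If you want to keep your direct route, you would need a measure-theoretic pushforward argument (Radon--Nikodym for the polar density of $F^{-1}_\#(D^sw)$ under a bi-Lipschitz map) in place of the area formula on the Cantor part; otherwise the duality argument is the clean way to close the hole.
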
 
    The proof of Theorem \ref{thm:diff_ex} follows along the lines of \cite[Chapter 10]{Giusti} (where the case of total variation is considered). The main point is the following change of variables formula. 
    \begin{lemma}\label{lem:change_var} 
    Let $F \colon \Omega \to \Omega$ be a diffeomorphism ($C^1$ up to the boundary) and let $w \in BV(\Omega)^n$. Then
    \begin{equation}\label{rep_formula} \cR(w \circ F) = \int_\Omega |\det D (F^{-1})|\, \dd \varrho(Dw\, DF\circ F^{-1}).  
    \end{equation}
    \end{lemma}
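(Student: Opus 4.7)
My plan is to prove \eqref{rep_formula} first in the smooth case and then extend to $BV$ by strict approximation and Reshetnyak continuity. For $w \in C^1(\overline{\Omega})^n$, the classical chain rule gives $D(w\circ F)(x) = Dw(F(x))\,DF(x)$, and the change of variables $y = F(x)$ (with Jacobian $|\det DF^{-1}(y)|$) yields
\[\cR(w\circ F) = \int_\Omega \varrho\bigl(Dw(F(x))\,DF(x)\bigr)\dd x = \int_\Omega \varrho\bigl(Dw(y)\,DF(F^{-1}(y))\bigr)\,|\det DF^{-1}(y)|\dd y.\]
Since $Dw$ is absolutely continuous, so is the matrix measure $Dw\,DF\circ F^{-1}$, with density $Dw(y)\,DF(F^{-1}(y))$. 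Hence the right-hand side equals $\int_\Omega |\det DF^{-1}|\,\dd\varrho(Dw\,DF\circ F^{-1})$, confirming \eqref{rep_formula} when $w$ is smooth.

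For general $w \in BV(\Omega)^n$, I would choose an approximating sequence $(w_k) \subset C^\infty(\Omega)^n$ with $w_k \to w$ in $L^1(\Omega)^n$ and $|Dw_k|(\Omega) \to |Dw|(\Omega)$ (cf.~\cite[Thm.~3.9]{afp}), so \eqref{rep_formula} holds for each $w_k$ by the smooth case. It then suffices to show that both sides pass to the limit. The integrand $\varrho$ is convex of linear growth with recession $\varrho^\infty$, so $\mu \mapsto \int_\Omega \dd\varrho(\mu)$---and its weighted version with continuous positive weight $|\det DF^{-1}|$---is continuous along strictly convergent sequences of vector measures by Reshetnyak's continuity theorem. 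It remains to verify two strict-convergence statements: on the left, $D(w_k\circ F) \to D(w\circ F)$ strictly on $\Omega$; on the right, $Dw_k\,(DF\circ F^{-1}) \to Dw\,(DF\circ F^{-1})$ strictly on $\Omega$. The first is a consequence of the $BV$ chain rule $F_\#D(w\circ F) = (Dw)\,(DF\circ F^{-1})\,|\det DF^{-1}|$ (which itself is obtained from the smooth case by density), combined with the change of variables. The second follows because right-multiplication by the continuous, everywhere invertible matrix $DF\circ F^{-1}$ preserves weak-$*$ convergence and, by another Reshetnyak-type argument, also preserves the convergence of total variations.

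The main obstacle is making these strict-convergence claims rigorous. Both reduce to applying Reshetnyak continuity to the positively $1$-homogeneous functions $\xi \mapsto |\xi\,DF(F^{-1}(y))|$, respectively weighted by $|\det DF^{-1}(y)|$; the invertibility of $DF\circ F^{-1}$ on $\overline{\Omega}$ provides the uniform comparability needed to exclude mass loss in the limit. Once strict convergence is in hand, passing to the limit in \eqref{rep_formula} along $(w_k)$ is immediate and yields the result for arbitrary $w \in BV(\Omega)^n$.
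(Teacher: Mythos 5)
Your overall strategy (prove the formula for smooth $w$ by the classical chain rule and change of variables, then pass to the limit along a strictly convergent smooth approximation) is different from the paper's, and it contains a genuine gap at the limiting step. You invoke ``Reshetnyak continuity'' to claim that $\mu \mapsto \int_\Omega \dd\varrho(\mu)$ (and its version weighted by $|\det DF^{-1}|$) is continuous along \emph{strictly} convergent sequences of measures. This is false for a general convex $\varrho$ of linear growth, which is all the lemma assumes (recall that $\varrho$ need not be positively $1$-homogeneous here --- see e.g.\ the integrand $\log\sum_i\exp(\sigma_i(A))$ in Section~\ref{sec:examples}). Reshetnyak's continuity theorem concerns integrands that are positively $1$-homogeneous in the measure variable; for a non-homogeneous convex $\varrho$ the functional $\mu\mapsto\varrho(\mu)(\Omega)=\int_\Omega\varrho(\mu^a)\,\dd\cL^m+\int_\Omega\varrho^\infty(\mu^s/|\mu^s|)\,\dd|\mu^s|$ is only lower semicontinuous under strict convergence. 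A standard counterexample: on $\Omega=\,]0,1[$ take $w(x)=x$ and the staircases $w_k$ with $k$ jumps of height $1/k$; then $Dw_k\to Dw$ strictly but, for $\varrho(\xi)=\sqrt{1+\xi^2}$, one has $\varrho(Dw_k)(\Omega)=1+1\not\to\sqrt2=\varrho(Dw)(\Omega)$. What is actually needed is \emph{area-strict} convergence, and while the Anzellotti--Giaquinta mollified approximants of \cite[Thm.~3.9]{afp} do converge area-strictly to $w$, you would additionally have to show that the two \emph{transformed} sequences $D(w_k\circ F)$ and $Dw_k\,(DF\circ F^{-1})$ converge area-strictly to their limits. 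Your sketch only addresses strict convergence (via the $1$-homogeneous comparison functions $\xi\mapsto|\xi\,DF(F^{-1}(y))|$), which is not enough to pass to the limit in either side of \eqref{rep_formula}.

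The paper's proof avoids this difficulty entirely by working with the dual representation \eqref{dual_rep} of Demengel--Temam: both sides of \eqref{rep_formula} are written as suprema over $h\in D_\varrho$ of expressions \emph{linear} in the measure, the identity $\int_\Omega h\circ F\cdot\dd D(w\circ F)=\int_\Omega|\det D(F^{-1})|\,h\cdot\dd Dw\,DF\circ F^{-1}$ is obtained by passing to the limit in a weak-$*$ convergent sequence (no strictness needed), and the substitution $h\mapsto h\circ F$, which preserves $D_\varrho$ and transforms the penalty term $\int\varrho^*(h)$ consistently, matches the two suprema term by term. If you want to keep your approximation-based route, you must either restrict to $1$-homogeneous $\varrho$ (where genuine Reshetnyak continuity applies) or upgrade all your convergence claims to area-strict convergence; as written, the limit passage does not go through.
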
 

    \begin{proof} 
    We will use the dual representation of $\cR$ as given in \cite[Lemma 1.1]{DemengelTemam}, according to which 
    \begin{equation}\label{dual_rep} \int_\Omega \varphi\, \dd \varrho(\mu) = \sup_{h \in D_\varrho} \int_\Omega \varphi h \cdot \dd \mu - \int_\Omega \varphi \varrho^*(h) \dd \cL^m ,
    \end{equation} 
    for $\mu\in M(\Omega)^{n \times m}$ (the space of matrix-valued finite Radon measures; recall that $M(\Omega) \cong C_0(\Omega)^*$) and $\varphi \in C(\overline{\Omega})$ with $\varphi \geq 0$, where
    \[ D_\varrho = \{ h \in C_c(\Omega)^{n \times m} \colon \varrho^*(h) \in L^1(\Omega)\}.\]
    Note that here and in \eqref{dual_rep}, the notation $\varrho^*(h)$ stands for the composition of $\varrho^*$ with $h$, consistently with the rest of the paper. We reserve the  formal symbol $\circ$ for ``inner'' compositions with maps $\Omega \to \Omega$. 
    
    We recall \cite[Thm.~1.17]{Giusti}\cite[Thm.~3.9]{afp} that for each $w \in BV(\Omega)^n$ there exists a sequence $(w_k) \subset C^\infty(\Omega)^n$ such that $w_k \to w$ in $L^1(\Omega)^n$ and  
    \[\int_{\Omega} |D w_k|\dd \cL^m \to |Dw|(\Omega), \]
    in particular $D w_k \weaklystar D w$ in $M(\Omega)^{n \times m}$. It is easy to check that $w_k \circ F \to w \circ F$ in $L^1(\Omega)^n$. We also have 
    \[ \int_\Omega |D (w_k \circ F)| \leq \int_\Omega |D w_k \circ F| \, |DF| = \int_\Omega |D w_k| \, |DF \circ F^{-1}| |\det D(F^{-1})|, \]
    in particular $w_k \circ F$ is bounded in $BV(\Omega)^n$, whence $w \circ F \in BV(\Omega)^n$ and 
    \begin{equation} \label{weak_comp} 
    D (w_k \circ F) \weaklystar D (w \circ F) \quad \text{in } M(\Omega)^{n \times m}. 
    \end{equation}
    For any $h \in C_c(\Omega)^{n \times m}$, we can calculate 
    \begin{equation*} \label{change_calc}
    \int_\Omega h \circ F \cdot D (w_k \circ F)  = \int_\Omega h \circ F \cdot Dw_k \circ F \, DF = \int_\Omega h \cdot Dw_k \, DF \circ F^{-1} |\det D(F^{-1})|.     
    \end{equation*} 
    Using \eqref{weak_comp}, we pass to the limit $k\to \infty$ obtaining 
    \begin{equation} \label{change_test}  
    \int_\Omega h \circ F \cdot \dd D (w \circ F) = \int_\Omega |\det D(F^{-1})| h \cdot \dd Dw \, DF \circ F^{-1} . 
    \end{equation} 
    Note that 
    \begin{equation} \label{change_star} \int_\Omega \varrho^*(h \circ F) = \int_\Omega |\det D(F^{-1})| \varrho^*(h) , 
    \end{equation} 
    in particular $h \in D_\varrho$ iff $h \circ F \in D_\varrho$. Thus, using the dual representation formula \eqref{dual_rep} in conjunction with \eqref{change_test} and \eqref{change_star}, we obtain 
    \begin{multline*}
    \cR(w \circ F) = \varrho(D(w \circ F))  \\= \sup_{h \in D_\varrho} \int_\Omega h \cdot \dd D(w \circ F) - \int_\Omega \varrho^*(h) \dd \cL^m  = 
    \sup_{h \in D_\varrho} \int_\Omega h\circ F \cdot \dd D(w \circ F) - \int_\Omega \varrho^*(h \circ F) \dd \cL^m \\ =  \sup_{h \in D_\varrho}\int_\Omega |\det D(F^{-1})| h \cdot \dd Dw \, DF \circ F^{-1} - \int_\Omega |\det D(F^{-1})| \varrho^*(h)\dd \cL^m
    \\=  
     \int_\Omega |\det D (F^{-1})|\, \dd \varrho(Dw\, DF\circ F^{-1}) .
    \end{multline*} 
    
        \end{proof}

    \begin{proof}[Proof of Theorem \ref{thm:diff_ex}]
    We take $F_\tau(x) = x + \tau \varphi(x)$ with $\varphi \in C^\infty_c(\Omega)^m$, $x \in \Omega$ and $\tau$ small enough, so that $F_\tau$ is an orientation-preserving diffeomorphism. By the representation formula \eqref{rep_formula} and recalling the definition of the measure $\varrho(Dw\, DF_\tau\circ F_\tau^{-1})$,  
    \begin{multline} \label{comp_expl}
    \cR(w \circ F_\tau) = \int_\Omega \det D (F_\tau^{-1})\, \varrho(Dw^{a}\, DF_\tau\circ F_\tau^{-1})\, \dd \cL^m \\ + \int_\Omega \det D (F_\tau^{-1})\, \varrho^\infty\left(\tfrac{Dw^s}{|Dw^s|}\, DF_\tau\circ F_\tau^{-1}\right) \dd |Dw^s|.
    \end{multline}
    Note that $DF_\tau\circ F_\tau^{-1} = I + \tau D \varphi \circ F_\tau^{-1}$.  Denoting $\varrho_x(\tau) = \varrho(Dw^{a}(x)\, DF_\tau\circ F_\tau^{-1}(x))$, we have for $\tau \neq 0$  
    \begin{multline*}\tfrac{1}{\tau}(\varrho_x(\tau) - \varrho_x(0)) = \tfrac{1}{\tau}(\varrho(Dw^{a}(x) (I + \tau D \varphi(x))) - \varrho(Dw^{a}(x)))\\ + \tfrac{1}{\tau}(\varrho(Dw^{a}(x) (I + \tau D \varphi\circ F_\tau^{-1}(x))) - \varrho(Dw^{a}(x) (I + \tau D \varphi(x))))
    \end{multline*} 
    Using assumption~\textit{\ref{Diffac}}, the global Lipschitz continuity of $\varrho$ and the continuity of $\tau \mapsto F_\tau^{-1}$, we deduce that $\varrho_x$ is differentiable at $\tau = 0$ for $\cL^m$-a.\,e.\ $x \in \Omega$ and there exists $C_{\varrho, \varphi} > 0$ such that
    \[\tfrac{1}{\tau}|\varrho_x(\tau) - \varrho_x(0)| \leq C_{\varrho, \varphi} |Dw^{a}(x)|. \]
    Thus, by dominated convergence, the first integral in \eqref{comp_expl} is differentiable at $\tau = 0$. Similarly, appealing to Alberti's rank-one theorem~\cite[Thm.~3.94]{afp} \cite{Alberti} and assumption\textit{\ref{Diffs}}, we show that the second integral in \eqref{comp_expl} is differentiable at $\tau = 0$.  
    \end{proof} 
    
    \subsection{Examples} \label{sec:examples}
    A simple way to ensure that conditions \textit{\ref{Diffac}} and \textit{\ref{Diffs}} of Theorem \ref{thm:diff_ex} are satisfied is to assume that $\varrho$ and $\varrho^\infty$ are differentiable everywhere except at $0$. In particular, \emph{vectorial} (such as defined by the Frobenius norm) and \emph{anisotropic (vectorial) total variations} given by any norm on $\bR^{n \times m}$ that is differentiable outside $0$, such as $\ell^{p,q}$, $p, q \in ]1, \infty[$, are differentiable along inner variations. The same is not true in the limiting cases $1, \infty$. In fact there are known examples where the assertion of Theorem \ref{thethm} fails in these cases, see~\cite{LasMolMuc2017}.

    A more striking example is given by the Nuclear or Trace norm:
    \begin{equation}\label{eq:nuclear}
        \varrho(A) = \text{Trace}((A A^T)^{\frac{1}{2}}),
    \end{equation}
    which is the sum of the singular values. The function:
    \[ \tau\mapsto A(I+\tau B)(I+\tau B)^T A^T\]
    is a one-parameter analytic function with values symmetric  $n\times n$ 
    matrices, so that by~\cite[p.~31]{Rellich}, the squared singular values
    can  also be described by analytic functions (and in particular, in a neighborhood
    of $\tau=0$, the positive eigenvalues stay positive). In addition,
    the kernel of $(I+\tau B)^T A^T$ is the same as the kernel of $A^T$ for
    small $\tau$, so that the number of non-zero singular values remains
    constant near $0$. We deduce that the sum of the singular values is
    also an analytic function near $0$ so that $\varrho$ satisfies
    the assumptions of Theorem~\ref{thm:diff_ex}. 

    In addition, we also deduce that any convex and one-homogeneous, differentiable and symmetric function
    of the singular values will enjoy the same properties, such as the
    $p$-Schatten norms for $p\in ]1,\infty[$. (The $p$-Schatten norm is the $\ell_p$ norm of the singular values.)

    {
    More generally for $A\in\bR^{n\times m}$, letting $p=\min\{m,n\}$
    we denote $\sigma(A)=(\sigma_1(A),\dots,\sigma_p(A))$ the ordered singular values of $A$. 
    We give a simple proof of the following standard result on ``unitary invariant'' convex functions of matrices, as a corollary of von Neumann's inequality.
    \begin{prop} 
    Let $\widetilde{h}$ be a 
    proper, lower semicontinuous, extended real-valued 
    convex function on $\bR^p_+$, non-decreasing with respect to each coordinate. 
    Then 
    \[h(A):=\widetilde h(\sigma(A))=\widetilde h(\sigma_1(A),\dots,\sigma_p(A))\]
    defines a convex function on $\bR^{n \times m}.$
    \end{prop}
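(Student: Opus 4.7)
My approach is to express $h$ as a pointwise supremum of affine functions on $\bR^{n\times m}$, combining Fenchel--Moreau duality for $\widetilde h$ with von Neumann's trace inequality. First I will extend $\widetilde h$ by $+\infty$ outside $\bR^p_+$, making it a proper, lower semicontinuous, convex function on all of $\bR^p$; biconjugation then yields
\[
\widetilde h(x) \;=\; \sup_{y\in\bR^p}\bigl\{\langle y,x\rangle - \widetilde h^*(y)\bigr\}.
\]
Because $\widetilde h$ is non-decreasing in each coordinate on $\bR^p_+$, one checks that $\widetilde h^*(y)=+\infty$ as soon as some $y_i<0$, so the supremum is in fact taken over $y\in\bR^p_+$.

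The heart of the argument is to show that for each fixed $y\in\bR^p_+$ (ordered consistently with the chosen ordering of the singular values), the map $A\mapsto \langle y,\sigma(A)\rangle$ is convex on $\bR^{n\times m}$. Letting $\Sigma_y$ denote the $n\times m$ ``diagonal'' matrix with entries $y_1,\dots,y_p$, von Neumann's trace inequality together with its equality case gives
\[
\langle y,\sigma(A)\rangle \;=\; \sup\bigl\{\Tr(U\Sigma_y V^T A^T)\colon U\in\bR^{n\times n},\ V\in\bR^{m\times m}\text{ orthogonal}\bigr\}.
\]
The right-hand side is a supremum of linear forms in $A$, so it is a convex function of $A$.

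Substituting $x=\sigma(A)$ into the dual representation of $\widetilde h$ then gives
\[
h(A) \;=\; \sup_{y\in\bR^p_+}\Bigl\{\sup_{U,V}\Tr(U\Sigma_y V^T A^T) \;-\; \widetilde h^*(y)\Bigr\},
\]
which is a supremum of affine functions of $A$, hence convex. This concludes the plan.

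The one step that requires care is the matching of orderings between $y$ and $\sigma(A)$ when one invokes the equality case of von Neumann's inequality: for $y$'s that appear out of the monotone order used for $\sigma(A)$, one uses the rearrangement inequality together with the permutation symmetry of $\widetilde h^*$ (inherited from the fact that $\widetilde h$ only sees a sorted $p$-tuple through $\sigma$) to pre-sort $y$ without changing the value of the expression under the supremum. I expect this ordering bookkeeping to be the only genuine technical hurdle; once it is in place, the rest is a formal combination of Fenchel--Moreau duality and von Neumann's inequality.
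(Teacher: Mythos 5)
Your strategy is essentially the one the paper uses --- Fenchel--Moreau duality for $\widetilde h$ combined with von Neumann's trace inequality --- just ``unrolled'' into a representation of $h$ as a supremum of affine functions rather than a computation of $h^*$ and $h^{**}$. Two of your justifications, however, do not hold as stated. First, for the extension of $\widetilde h$ by $+\infty$ outside $\bR^p_+$ it is \emph{false} that $\widetilde h^*(y)=+\infty$ whenever some $y_i<0$: taking $\widetilde h\equiv 0$ on $\bR^p_+$ gives $\widetilde h^*(y)=\sup_{x\ge 0}\langle y,x\rangle=0$ for every $y\le 0$. The conjugate of a function whose effective domain lies in $\bR^p_+$ is merely non-decreasing; it blows up in the \emph{positive} directions, not the negative ones. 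The restriction of the biconjugate supremum to $y\in\bR^p_+$ is still correct, but it needs a different argument --- e.g.\ replace your extension by the monotone extension $g(x)=\widetilde h(x\vee 0)$ (convex because $\widetilde h$ is componentwise non-decreasing), for which $g^*$ genuinely equals $+\infty$ off $\bR^p_+$, or use the paper's even extension $\widetilde h(|s_1|,\dots,|s_p|)$.

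Second, and more seriously, the ``ordering bookkeeping'' you defer to the last paragraph is where the real content lies, and your proposed fix is circular: $\widetilde h^*$ is \emph{not} permutation-symmetric unless $\widetilde h$ is, and no symmetry is ``inherited from the fact that $\widetilde h$ only sees a sorted $p$-tuple through $\sigma$'' --- the conjugate is computed from the values of $\widetilde h$ on all of $\bR^p_+$, not only on sorted tuples. Without symmetry, the equality case of von Neumann only yields $\sup_{U,V}\Tr(U\Sigma_y V^T A^T)=\langle y^\downarrow,\sigma(A)\rangle$ (with $y^\downarrow$ the decreasing rearrangement), and there is no way to recover the term $\langle y,\sigma(A)\rangle-\widetilde h^*(y)$ for unsorted $y$. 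Indeed, without a symmetry hypothesis the statement itself fails: for $p=2$ and $\widetilde h(s_1,s_2)=s_2$ (linear, hence convex, and non-decreasing in each coordinate), $h(A)=\sigma_2(A)$ is the smallest singular value, and with $A=\mathrm{diag}(1,0)$, $B=\mathrm{diag}(0,1)$ one gets $h(\tfrac12(A+B))=\tfrac12>0=\tfrac12(h(A)+h(B))$. To be fair, the paper's own proof relies on the same point (its ``opposite inequality is obvious'' step needs $\widetilde h(s)=\widetilde h(s^\downarrow)$), and all the intended applications are symmetric functions of the singular values; but since you explicitly identified the ordering issue as the only technical hurdle, you should be aware that it cannot be resolved from the stated hypotheses alone --- one must add, and then actually use, permutation invariance of $\widetilde h$.
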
 
    \begin{proof} 
    We first observe that if we extend $\widetilde{h}$ to the whole $\bR^p$ 
    as an even function with respect to each coordinate:
    \begin{equation}\label{even_each}
    \widetilde{h}(s_1, \ldots, s_p) = \widetilde{h}(|s_1|, \ldots , |s_p|) \quad \text{for } (s_1, \ldots, s_p) \in \bR^p,
    \end{equation} 
    then $\widetilde{h}$ is convex on $\bR^p$.
    For any extended-real valued function on $\bR^p$ satisfying \eqref{even_each}, the same holds for its convex conjugate (and biconjugate). Moreover, if $t \in \bR^p_+$, then 
    \[\widetilde{h}^{*}(t) = \sup_{s \in \bR^p_+} t \cdot s - \widetilde{h}(s).\]
    It then follows from von~Neumann's inequality~\cite{vonNeumann,Mirsky} $\Tr AB^T \le \sum_{i=1}^p \sigma_i(A)\sigma_i(B)$ that
    \[
    h^*(B) = \sup_{A\in\bR^{n\times m}} \Tr A B^T - h(A) \leq \sup_{s\in\bR_+^p} s\cdot \sigma(B) -\widetilde h(s) = \widetilde{h}^*(\sigma(B)),
    \]
    while the opposite inequality is obvious, choosing $A$ a matrix with 
    singular values $s \in \bR_+^p$ and the same singular vectors as $B$. 
    By the same reasoning and the Fenchel-Moreau theorem, we deduce
    \[
    h^{**}(A) = \sup_{B\in\bR^{m\times n}} \Tr A B^T - h^*(B) = \sup_{s\in\bR_+^p} \sigma(A)\cdot s -\widetilde{h}^*(s) = \widetilde{h}^{**}(\sigma(A)) = \widetilde{h}(\sigma(A)) = h(A). 
    \]
    This shows that $h$ is convex.
    \end{proof}
    }
    We remark that if in addition $\widetilde{h}$ is smooth and a symmetric function of its arguments, then the discussion
    above for the Nuclear norm applies and ${h}$ is differentiable along inner variations. An interesting example is the following: we consider
    \[
    \varrho(A) = \log \sum_{i=1}^p \exp(\sigma_i(A)).
    \]
    We claim that $\varrho$ satisfies the assumptions of Theorem~\ref{thm:diff_ex}. Indeed, on the one hand, $\varrho(A)$ is smooth
    and satisfies \textit{\ref{Diffac}}. On the other hand, one readily checks that
    \[
    \varrho^\infty(A)=\lim_{t\to +\infty} \tfrac{1}{t}\varrho(tA) = \max\{\sigma_1(A),\dots,\sigma_p(A)\}.
    \]
    Therefore $\varrho^\infty$ is the Spectral (or Operator) norm, which does not satisfy \textit{\ref{Diffac}}, yet satisfies \textit{\textit{\ref{Diffs}}} since it coincides with the Frobenius (as well as Nuclear) norm on rank-one matrices.
    
    \section{Boundedness of minimizers}\label{sec:Bounded}

    In this section we consider minimizers of
    $\cE$ with $\cR(w)=\int_\Omega\varrho(Dw)$, $\varrho$ satisfying the assumptions of Theorem~\ref{thm:diff_ex} (whence \textit{\ref{Hdiff}} holds), and     $\cF(w) = \int_\Omega\psi(w-f)$ with $\psi$ convex.
    In order to show that Theorem~\ref{thethm} applies, we need to check that \textit{\ref{Hbv}} and \textit{\ref{Hbdd}} are also satisfied.
    We first assume that $\varrho$ is coercive
    ($\varrho(A)\ge c(|A|-1)$ for some $c>0$), so that \textit{\ref{Hbv}} trivially
    holds. As for~\textit{\ref{Hbdd}}, 
    the situation is trivial if $\psi$ is Lipschitz. Otherwise, as already mentioned in Remark~\ref{remthethm}, we can assume that $f$ is bounded and ensure that the domain of $\cR$ is contained in $L^\infty(\Omega)^n$ by imposing a box constraint. 

That being said, let us now consider the case of unconstrained functional $\cR$ given by \eqref{R_rho}. 

\paragraph{Scalar case} The easiest is the scalar case $n=1$. 
\begin{lemma}\label{lem:bound1D}
    Let $n=1$ and assume that $\psi$ is coercive, that is $\lim_{t\to\pm\infty} \psi(t)=\infty$. Let $f\in L^\infty(\Omega)$ and let $u\in BV(\Omega)$ be a minimizer of $\cE$. Then $u\in L^\infty(\Omega)$.
\end{lemma}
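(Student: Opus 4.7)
The plan is a classical truncation argument by contradiction. It suffices to show $u$ is bounded above, since the lower bound follows by replacing $u$ with $-u$, $f$ with $-f$, and $\psi$ with the still convex and coercive $t\mapsto\psi(-t)$. By convexity and coercivity of $\psi$, its minimizer set is a nonempty compact interval $[a_-,a_+]\subset\bR$, and $\psi$ is strictly increasing on $[a_+,+\infty)$. I fix $M>\|f\|_{L^\infty(\Omega)}+a_+$ and assume, for contradiction, that $E:=\{u>M\}$ has positive $\cL^m$-measure; the competitor is $v:=u\wedge M\in BV(\Omega)$.

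For the fidelity, $v\equiv u$ on $\Omega\setminus E$, while on $E$ both $u-f$ and $M-f$ strictly exceed $a_+$ (by the choice of $M$), so they lie in the strictly increasing half-line of $\psi$; combined with $u>M=v$ on $E$, this gives $\psi(v-f)<\psi(u-f)$ a.e.\ on $E$, whence $\cF(v-f)<\cF(u-f)$.

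For the regularizer, the $BV$ chain rule applied in the scalar case to the $1$-Lipschitz non-decreasing map $T(s)=s\wedge M$ yields $D^a v=\chi_{\{u<M\}}\nabla u\,\dd\cL^m$; on $J_u$, $(v^+-v^-)\nu_u=\alpha(x)(u^+-u^-)\nu_u$ with $\alpha(x)\in[0,1]$; and $D^c v=\chi_{\{\tilde u<M\}}D^c u$. Positive $1$-homogeneity of the convex recession $\varrho^\infty$ then shows the jump and Cantor contributions to $\cR$ decrease. For the absolutely continuous part, non-negativity of $\varrho$ together with $\varrho(0)=0$ (satisfied by all regularizers of Section~\ref{sec:examples}) gives $\int\varrho(\nabla v)\le\int\varrho(\nabla u)$. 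Summing, $\cR(v)\le\cR(u)$, and combined with the strict fidelity decrease one gets $\cE(v)<\cE(u)$, contradicting minimality.

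The main obstacle is the absolutely continuous estimate when $\varrho(0)>0$: hard truncation then introduces a spurious term $\varrho(0)\cL^m(E)$ that is not directly compensated by any natural pointwise comparison with $\varrho(\nabla u)$ on $E$. A workaround is to replace the hard truncation by the soft one $u_t:=u-t(u-M)_+$ and let $t\to 0^+$: the differentiability of $\cR$ along inner variations (assumption \ref{Hdiff}) combined with the strict monotonicity of $\psi$ on $(a_+,+\infty)$ guarantees that the one-sided derivative of $\cE$ at $u$ in the direction $-(u-M)_+$ is strictly negative whenever $\cL^m(E)>0$, which again contradicts minimality.
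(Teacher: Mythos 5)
Your core argument is the same as the paper's: truncate at a level $M$ exceeding $\|f\|_{L^\infty(\Omega)}$ plus the monotonicity threshold of $\psi$, observe that the fidelity strictly decreases on the set where the truncation acts, use the $BV$ chain rule to see that the regularizer does not increase, and contradict minimality. (The paper truncates on both sides at once via $M\wedge(u\vee-M)$ instead of invoking the symmetry $u\mapsto-u$; if you use the symmetry, note that $\cR$ is also replaced by $w\mapsto\cR(-w)$, which is harmless since all hypotheses are preserved.) Your remark that the absolutely continuous part requires $\varrho(0)=0$, or at least $0\in\arg\min\varrho$, is a fair one: the paper's own chain-rule formula silently drops the term $\varrho(0)\chi_{\{|u|>M\}}\,\dd\cL^m$, so it is making the same normalization, which indeed holds for every example in the paper.

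The workaround in your last paragraph, however, does not work. First, $u_t=u-t(u-M)_+$ is an outer (vertical) variation, not an inner variation $u(x+\tau\varphi(x))$, so hypothesis \textit{\ref{Hdiff}} says nothing about it. Second, even granting that $t\mapsto\cR(u_t)$ is differentiable at $t=0^+$, differentiability carries no sign information: minimality only yields $\tfrac{\dd}{\dd t}\cR(u_t)\big|_{0^+}\ge-\tfrac{\dd}{\dd t}\cF(u_t-f)\big|_{0^+}>0$, which is no contradiction. To conclude you would still need $\cR(u_t)\le\cR(u)$ for small $t>0$; since $\nabla u_t=(1-t\chi_{\{u>M\}})\nabla u$ a.e., convexity gives only $\varrho((1-t)\xi)\le(1-t)\varrho(\xi)+t\varrho(0)$, which is $\le\varrho(\xi)$ precisely when $\varrho(0)\le\varrho(\xi)$ --- the same obstruction as for the hard truncation. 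So either state $0\in\arg\min\varrho$ as a (harmless) hypothesis and keep your first argument, or drop the final paragraph; it does not extend the result.
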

\begin{proof}
By assumption, there is $T>0$ such that $\psi$ is decreasing on $]-\infty,T[$ and
increasing on $]T,\infty[$. Let $u^M:= M \wedge (u \vee -M)$
(the function $u\vee v$ is $x\mapsto \max\{u(x),v(x)\}$, and similarly
$u\wedge v$ is the minimum of $u$ and $v$), and let $M>T+\|f\|_{L^\infty(\Omega)^n}$.
Suppose that $|u|>M$ on a set $E \subset \Omega$ of positive measure. For a.\,e.\ $x \in E$, if $u(x)>M$, then $u(x)-f(x) > u^M(x)-f(x)> T$ 
and if $u(x)<-M$, then $u(x)-f(x) < u^M(x)-f(x) < -T$, whence
$\psi(u(x)-f(x))> \psi(u^M(x)-f(x))$. It follows that $\mathcal{F}(u^M-f)< \mathcal{F}(u-f)$.

On the other hand, it is well known that $\cR(u^M)<\cR(u)$, unless $u^M=u$:
{this can be deduced from the Chain Rule~\cite{ChainRule} which shows
that:
\[
\varrho(Du^M) = \varrho(D^a u)\chi_{\{|u|\le M\}}
+ \varrho(D^c u)\chi_{\{|\widetilde u|\le M\}}
+ ((u^M)^+-(u^M)^-)\varrho^\infty(\nu_u)\cH^{m-1}\mres J_u,  
\]
and is strictly below $\varrho(Du)$ if $u^M\neq u$. (Here, $\widetilde u$ denotes the precise representative, see Section~\ref{sec:BV}.)}
It follows that $\cE(u^M) < \cE(u)$, a contradiction.
\end{proof}

 \paragraph{Vectorial case}   The vectorial case is more complicated. A criterion for having a maximum principle
    in vectorial variational problems is identified in~\cite{LeonettiSiepe2005a,LeonettiSiepe2005b}. Our criterion for
    the regularizer is derived from these references, and ensures that
    when $u$ is projected on some half-space in some (at least $n$) directions, then
    $\cR$ will decrease. For the data term,
    we need also that $\cF(u-f)$ decreases along certain projections, which is
    ensured for instance if $\psi$ it is uniformly coercive in such directions, in the sense which we propose below. We assume that there exist $(e_1,\dots, e_n)$ independent unit vectors of $\bR^n$ such that for $i=1,\dots,n$: 
\begin{itemize}
\item[(i)] $\varrho((I-e_i\otimes e_i)A)\le \varrho(A)$ for all $A\in \bR^{n\times m}$ (with strict inequality if $A^T e_i \neq 0$);
\item[(ii)] Letting, for $u'\in e_i^\perp$, 
\[t_i(u'):= \sup \left\{|t_*| \colon t_* \in \arg\min_{t\in \bR}\psi(u'+te_i)\right\},\] one has $T_i:=\sup_{u'\in e_i^\perp} t_i(u') <\infty$.
\end{itemize}
When (ii) holds, we observe that $t\mapsto \psi(u'+te_i)$ is increasing for $t>T_i$ and decreasing for $t<-T_i$ by convexity of $\psi$.

Thus, we can reproduce the proof of Lemma~\ref{lem:bound1D} in
each direction $e_i$, using 
\[u(x)- (e_i\cdot u(x)- M)_+ e_i - (e_i\cdot u(x)+ M)_- e_i\] 
in place of $M \wedge (u \vee -M)$. We obtain:
\begin{lemma}
    Assume that (i) and (ii) above hold. Let $f\in L^\infty(\Omega)^n$ and let $u\in BV(\Omega)^n$ be a minimizer of $\cE$. Then $u\in L^\infty(\Omega)^n$.
\end{lemma}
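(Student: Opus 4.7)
The plan is to adapt the argument of Lemma~\ref{lem:bound1D} direction by direction. For each $i\in\{1,\dots,n\}$ and each $M>T_i+\|f\|_{L^\infty(\Omega)^n}$, I would introduce the competitor
\[
v_M(x):=u(x)-(e_i\cdot u(x)-M)_+\,e_i-(e_i\cdot u(x)+M)_-\,e_i,
\]
i.\,e.\ the composition $P_M^i\circ u$ of $u$ with the $1$-Lipschitz projection $P_M^i$ of $\bR^n$ onto the slab $\{v\in\bR^n:|e_i\cdot v|\le M\}$. The goal is to prove $\cE(v_M)<\cE(u)$ whenever $E_{i,M}:=\{x\in\Omega:|e_i\cdot u(x)|>M\}$ has positive Lebesgue measure; this would contradict minimality and force $|e_i\cdot u|\le M$ a.\,e. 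Since $(e_i)_{i=1}^n$ is a basis of $\bR^n$, combining the bounds obtained for $i=1,\dots,n$ would give $u\in L^\infty(\Omega)^n$.

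For the fidelity, I would decompose $\bR^n=\bR e_i\oplus e_i^\perp$ and write $u(x)-f(x)=w(x)+s(x)\,e_i$ with $w(x)\in e_i^\perp$, $s(x)\in\bR$. Assumption (ii) together with convexity of $\psi$ implies that $t\mapsto\psi(w+t\,e_i)$ is strictly increasing on $[T_i,\infty)$ and strictly decreasing on $(-\infty,-T_i]$ for every $w\in e_i^\perp$. At almost every $x\in E_{i,M}$, the choice $M>T_i+\|f\|_{L^\infty}$ ensures that both $s(x)$ and the analogous coefficient for $v_M(x)-f(x)$ lie on the same strictly monotone branch, the truncated one being strictly closer to $0$. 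This yields $\psi(v_M-f)<\psi(u-f)$ pointwise on $E_{i,M}$, hence $\cF(v_M-f)<\cF(u-f)$ whenever $\cL^m(E_{i,M})>0$.

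For the regularizer, I would invoke the BV chain rule for the Lipschitz composition $v_M=P_M^i\circ u$ (as in~\cite{ChainRule}), which expresses the absolutely continuous, Cantor and jump parts of $Dv_M$ in terms of those of $Du$. The absolutely continuous part equals $(I-\chi_{\{|e_i\cdot u|>M\}}\,e_i\otimes e_i)\nabla u$, so (i) gives $\varrho(\nabla v_M)\le\varrho(\nabla u)$ pointwise. The jump part is $(P_M^i(u^+)-P_M^i(u^-))\otimes\nu_u\,\cH^{m-1}\mres J_u$; writing $u^+-u^-=w+\alpha\,e_i$ with $w\in e_i^\perp$, a direct case analysis over the positions of $e_i\cdot u^\pm$ relative to $\pm M$ yields $P_M^i(u^+)-P_M^i(u^-)=w+\alpha'\,e_i$ with $|\alpha'|\le|\alpha|$ and $\alpha\alpha'\ge 0$. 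Applying (i) to rank-one matrices of the form $(w+t\,e_i)\otimes\nu_u$ shows that $t\mapsto\varrho^\infty((w+t\,e_i)\otimes\nu_u)$ is convex and attains its minimum at $t=0$, hence is monotone on each half-line, giving $\varrho^\infty((P_M^i(u^+)-P_M^i(u^-))\otimes\nu_u)\le\varrho^\infty((u^+-u^-)\otimes\nu_u)$. An analogous argument, based on Alberti's rank-one theorem for $D^c u$, handles the Cantor part. Summing the three contributions would yield $\cR(v_M)\le\cR(u)$, which together with the strict fidelity inequality contradicts minimality unless $\cL^m(E_{i,M})=0$.

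The main obstacle is the jump (and Cantor) analysis: $P_M^i$ is only piecewise affine, so its action on jumps that straddle the slab boundary is not a linear projection, and (i) alone does not directly compare $\varrho^\infty$ at the original and truncated jump vectors. The convexity-plus-minimality observation for the one-parameter family $t\mapsto\varrho^\infty((w+t\,e_i)\otimes\nu_u)$ derived from (i) is what rescues the pointwise comparison in all cases, in particular when the jump crosses the slab.
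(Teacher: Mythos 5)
Your proposal is correct and follows essentially the same route as the paper, which simply states that one reproduces the proof of Lemma~\ref{lem:bound1D} in each direction $e_i$ with the competitor $u-(e_i\cdot u-M)_+e_i-(e_i\cdot u+M)_-e_i$. You in fact supply more detail than the paper does, in particular the observation that $t\mapsto\varrho^\infty((w+te_i)\otimes\nu_u)$ is convex with minimum at $t=0$ (by (i), which passes from $\varrho$ to $\varrho^\infty$ in the limit), which correctly handles the jump and Cantor parts when a jump straddles the slab boundary.
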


\paragraph{Examples} For all the examples in Section~\ref{sec:examples},
(i) is true in all directions of the canonical basis (and in all directions
for the Frobenius norm, or the Schatten norms or other symmetric and non-decreasing function of the singular values, see~\cite[Prop.~6.4]{Bhatia}). Hence
Theorem~\ref{thethm} holds for minimizers of $\cE$, for many data terms
such as data terms of the form $\psi(w) = \widetilde{\psi}(|w|)$ with $\widetilde{\psi}$ a non-negative and coercive convex function, since in such cases (ii) is easy to check.

\paragraph{A remark on the non coercive case}    
We give an (elementary) example here of a situation where the energy is not even
coercive in $BV(\Omega)^n$ and yet, Theorem~\ref{thethm} still applies.
We consider $\omega\subset\bR^{m-1}$ and $\Omega = \omega\times \mathbb{S}^1 \subset \bR^m/(\{0_{m-1}\}\times \mathbb{Z})$ a $m$-dimensional open set which is periodic in the last variable $x_m$.
For simplicity we set $n=1$.
We let $\varrho(p) = \sqrt{\sum_{i=1}^{m-1} p_i^2}$, for $p\in\bR^m$, and to simplify $\psi(t)=t^2/2$.
Then, let $f\in BV(\Omega)\times L^\infty(\Omega)$ and $u$ be the unique minimizer
of
\[
\cE(w) = \varrho(Dw)(\Omega) + \frac{1}{2}\int_\Omega (w-f)^2.
\]
Observe that a minimizing sequence is bounded in $L^2(\Omega)$ and will
converge (weakly) to some limit. The regularizer being convex and lower semicontinuous, a minimizer exists (and is unique by strict convexity).
A priori, $u$ is not necessarily in $BV$, as only the first $(m-1)$ components of $Du$ are
bounded measures. However, just as in the case of the $TV$ regularizer, the subdifferential $\partial \cR$ defines an accretive operator on $L^p(\Omega)$ for $1\le p\le \infty$ (see e.\,g.\ \cite[Appendix A.4]{ACM} for the definition). Thus, the minimization problem for $\cE$ (which coincides with the resolvent problem for $\partial \cR$)
is non-expansive on any $L^p(\Omega)$, $1\le p\le \infty$. This can be shown by a direct calculation using a smooth, uniformly convex approximation of $\cR$. 
Hence, one has for any $t\in\bR$, denoting $e_m=(0,\dots,0,1)$:
\[
\int_\Omega |u(x+te_m)-u(x)|\dd x \le \int_\Omega |f(x+te_m)-f(x)|\dd x \le |t|\int_\Omega |Df|
\]
and we deduce that $e_m\cdot Du$ is also a bounded measure. Alternatively, this can be showed by testing differentiated Euler-Lagrange equation satisfied by $u$ with $\frac{Du}{|Du|}$, reasoning as in \cite{LasicaRybka}. Again, this needs to be made rigorous by regularization.
As clearly one also has $\|u\|_\infty \le \|f\|_\infty<\infty$, it follows that
$u$ satisfies the assumptions of Theorem~\ref{thethm} and we deduce
$u^+-u^-\le f^+-f^-$ $\cH^{m-1}$-a.\,e.~in $\Omega$.

\section{Higher order regularizers}\label{sec:TGV}
\subsection{Inf-convolution based higher order regularizers}
    Here we consider regularizers of form 
    \begin{equation} \label{inf_conv} 
    \cR(w) = \inf_{z \in \cD_2}\widetilde{\cR}(w,z). 
    \end{equation}
    where $\cD_2$ is, in general, a set and $\widetilde{\cR}\colon L^1_\loc(\Omega)^n\times \cD_2 \to [0, \infty]$ is convex. This includes the following several variants of $TV$ of inf-convolution type, introduced in literature to remedy the phenomenon of staircasing observed in solutions to \eqref{eq:ROF}. For simplicity, we recall their form in the case $n=1$.  
    \begin{itemize}
        \item \emph{Total generalized variation (of second order)} 
        \[TGV(w) = \min_{z \in BD(\Omega)} |Dw - z^T \cL^m|(\Omega) + |Ez|(\Omega),\]
        where $Ez = \frac{1}{2}(Dz + Dz^T)$ is the symmetrized gradient and 
        \[BD(\Omega) = \{z \in L^1(\Omega)^m \colon Ez \in M(\Omega)^{m \times m} \}\]
        is the space of functions bounded deformation~\cite{TemamPlasticite}. 
        \item \emph{Non-symmetrized} variant of $TGV$, 
        \[nsTGV(w) = \min_{z \in BV(\Omega)^m} |Dw - z^T \cL^m|(\Omega) + |Dz|(\Omega). \]
        \item \emph{Infimal convolution total variation (of second order)}, 
        \[ICTV(w) = \min_{z \in BV^2(\Omega)} |Dw - Dz|(\Omega) + |DDz|(\Omega),\]
        where 
        \[BV^2(\Omega) = \{z \in BV(\Omega) \colon Dz \in BV(\Omega)^m\}.\] 
    \end{itemize}   
    We will produce a version of Theorem \ref{thethm} that applies to smooth
    variants of all these examples. In our current setting, varying the whole functional $\cR$ in the direction of variable $w$ is not a natural approach. Instead, we will use the formal equivalence of the minimization problem \eqref{eq:mainpb} for $\cE$ with the problem of finding $u, v$ such that 
    \begin{equation}\label{inf_aux}
        \widetilde{\cE}(u,v) = \inf\{\widetilde{\cE}(w,z) \colon w \in L^1_\loc(\Omega)^n, \ z \in \cD_2 \}, \quad \text{where } \widetilde{\cE}(w,z) := \widetilde{\cR}(w, z) + \cF(w-f)
    \end{equation}
    and consider suitable variations that move both $w$ and $z$. Another issue, appearing for example in the case of $TGV$, is that $\cD_2$ might not be closed under (directed) inner variations. On the bright side, we do not to need to assume any particular form of variation in direction of $v$, since we are not interested in obtaining bounds on the part of the minimizer corresponding to the auxiliary variable. 
    \begin{thm}\label{thm:inf_conv}
        Suppose that $\cR$ is of form \eqref{inf_conv}, $f \in BV_\loc(\Omega)^n$ and $u$ is a minimizer of $\cE$ with $\cE(u) < \infty$. In addition to \ref{Hbv} and \ref{Hbdd}, we assume that
    \begin{enumerate}[label=(H\arabic*')] \setcounter{enumi}{\value{Hdiffc}} \addtocounter{enumi}{-1}
    \item \label{Hdiff'}  there exists $v \in \cD_2$ such that $(u,v)$ is a solution to \eqref{inf_aux} and for any directional inner variation $\varphi$ there exists a map 
    \[\tau \mapsto v_{\varphi, \tau} \in L^1_\loc(\Omega)^n \quad \text{with } v_{\varphi,0} = v \]
    defined on a neighborhood of $0$ such that $\tau \mapsto \widetilde{\cR}(u^\varphi_\tau, v_{\varphi,\tau})$ is differentiable at $\tau = 0$.  
    \end{enumerate}
    If $\psi$ is $C^1$ and strictly convex, then $\cH^{m-1}(J_u\setminus J_f)=0$. If $\psi \in C^2$, then 
	\begin{equation} \label{main_ineq_inf} 
		 (u^+ - u^-) \cdot A \, (u^+ - u^-) \leq (f^+ - f^-)\cdot A \, (u^+ - u^-)\qquad \cH^{m-1}\text{-a.\,e.~on } J_u,
	\end{equation} 
where 
\[A = \int_0^1 D^2 \psi (u^- - f^- + s(u^+ - f^+ - u^- + f^+)) \dd s.\] 
    \end{thm}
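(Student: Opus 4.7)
The strategy is to imitate the proof of Theorem~\ref{thethm}, with the only substantive change being in the analog of Lemma~\ref{lem:inner_diff}: we replace variations of $u$ alone by joint mixed variations of the pair $(u,v)$ furnished by \ref{Hdiff'}. Lemma~\ref{lem:fidelity} concerns only the fidelity term and the minimizer $u$ and therefore applies verbatim; once we secure the analog of \eqref{fid_ineq}, the concluding chain of computations after Lemma~\ref{lem:fidelity} in the proof of Theorem~\ref{thethm}---blowing up at a point of $J_u$, sending $\vartheta\to 0^+$, and invoking strict convexity of $\psi$ in the $C^1$ case or the fundamental theorem of calculus in the $C^2$ case---yields \eqref{main_ineq_inf} word for word.

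To adapt Lemma~\ref{lem:inner_diff}, fix a directional inner variation $\varphi$, let $v_{\varphi,\tau}$ be the curve from~\ref{Hdiff'}, and for $\vartheta\in[0,1]$ and $|\tau|$ small define
\[\bigl(u^\varphi_{\vartheta,\tau},\, v^\varphi_{\vartheta,\tau}\bigr) := \bigl(\vartheta u^\varphi_\tau + (1-\vartheta)u,\ \vartheta v_{\varphi,\tau} + (1-\vartheta)v\bigr),\]
which lies in the (convex) domain of $\widetilde\cR$. Since by~\ref{Hdiff'} the pair $(u,v)$ is a minimizer of $\widetilde\cE$, we have for $\tau>0$
\[0 \le \tfrac{1}{\tau}\bigl(\widetilde\cR(u^\varphi_{\vartheta,\pm\tau},\, v^\varphi_{\vartheta,\pm\tau}) - \widetilde\cR(u,v)\bigr) + \tfrac{1}{\tau}\bigl(\cF(u^\varphi_{\vartheta,\pm\tau}-f) - \cF(u-f)\bigr).\]
Joint convexity of $\widetilde\cR$ gives
\[\widetilde\cR(u^\varphi_{\vartheta,\pm\tau},\, v^\varphi_{\vartheta,\pm\tau}) - \widetilde\cR(u,v) \le \vartheta\bigl(\widetilde\cR(u^\varphi_{\pm\tau}, v_{\varphi,\pm\tau}) - \widetilde\cR(u,v)\bigr),\]
and by~\ref{Hdiff'} the right-hand side divided by $\tau$ converges to $\pm\vartheta\,\widetilde R'_\varphi(0)$ as $\tau\to 0^+$, where $\widetilde R_\varphi(\tau) := \widetilde\cR(u^\varphi_\tau, v_{\varphi,\tau})$. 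Summing the $+\tau$ and $-\tau$ inequalities, the regularizer contributions cancel in the limit, giving precisely \eqref{fid_ineq}.

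The remainder of the argument is identical to the proof of Theorem~\ref{thethm}: apply Lemma~\ref{lem:fidelity} to both $\tfrac{1}{\tau}(\cF(u^\varphi_{\vartheta,\pm\tau}-f) - \cF(u-f))$, divide by $\cL^{m-1}(B^{m-1}_r)$ and let $r\to 0^+$ at a suitable point $x_0\in J_u$, then let $\vartheta\to 0^+$ to recover the pointwise inequality \eqref{nice_ineq} at $x_0$, and finally conclude either by strict monotonicity of $D\psi$ or by integration of $D^2\psi$ along the appropriate segment. The one conceptual point worth noting is that~\ref{Hdiff'} requires only a single companion $v$ realizing the infimum in \eqref{inf_conv} together with an associated curve $\tau\mapsto v_{\varphi,\tau}$ making the \emph{joint} functional differentiable at $0$; hence $\cD_2$ need not be closed under inner variations of its elements, which matters for examples such as smoothed TGV where $\cD_2=BD(\Omega)$ and $v_{\varphi,\tau}$ must be defined by a corrective formula rather than a naive pull-back. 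No genuine obstacle appears in the structural argument above; the real work, addressed separately, lies in constructing such a curve $v_{\varphi,\tau}$ for each concrete inf-convolution regularizer of interest.
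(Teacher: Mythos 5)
Your proposal is correct and follows essentially the same route as the paper: the only modification to the argument for Theorem~\ref{thethm} is the replacement of Lemma~\ref{lem:inner_diff} by its joint-variation analogue, obtained from minimality of the pair $(u,v)$ for $\widetilde{\cE}$, joint convexity of $\widetilde{\cR}$ applied to the convex combination $\vartheta(u^\varphi_{\pm\tau},v_{\varphi,\pm\tau})+(1-\vartheta)(u,v)$, and the differentiability of $\tau\mapsto\widetilde{\cR}(u^\varphi_\tau,v_{\varphi,\tau})$ at $0$ guaranteed by \ref{Hdiff'}. This matches the paper's Lemma~\ref{lem:inner_diff_inf} step for step, including the observation that Lemma~\ref{lem:fidelity} applies verbatim because it does not use minimality.
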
 
The proof of this result is identical to the proof of Theorem~\ref{thethm}, once
the following lemma has been established. (Lemma \ref{lem:fidelity} can be applied directly, since it only concerns the fidelity term, and does \emph{not} assume that $u$ is a minimizer of $\cE$.) 
    \begin{lemma} \label{lem:inner_diff_inf}
Let $u$ be the minimizer of $\cE$ and assume that condition \ref{Hdiff'} of Theorem \ref{thm:inf_conv} is satisfied. For $\vartheta \in [0,1]$, we denote 
\[u^\varphi_{\vartheta, \tau} = \vartheta u_\tau^\varphi + (1 -\vartheta) u.\]
Then, for $\vartheta \in [0,1]$,
\[\liminf_{\tau \to 0^+} \tfrac{1}{\tau}(\cF(u^\varphi_{\vartheta,\tau} - f) - \cF(u - f)) + \liminf_{\tau \to 0^+} \tfrac{1}{\tau}(\cF(u^\varphi_{\vartheta,-\tau} - f) - \cF(u - f)) \geq 0.\]
\end{lemma}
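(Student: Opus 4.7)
The plan is to mirror the proof of Lemma~\ref{lem:inner_diff} but replace the $w$-only variation $u^\varphi_{\vartheta,\tau}$ by a \emph{joint} variation of the pair $(w,z)$, exploiting the companion map $\tau \mapsto v_{\varphi,\tau}$ furnished by assumption \ref{Hdiff'}. Concretely, I would define
\[ v^\varphi_{\vartheta, \tau} := \vartheta v_{\varphi, \tau} + (1-\vartheta) v, \]
so that $(u^\varphi_{\vartheta,\tau}, v^\varphi_{\vartheta,\tau}) = \vartheta (u^\varphi_\tau, v_{\varphi,\tau}) + (1-\vartheta)(u,v)$ and $v^\varphi_{\vartheta,0} = v$. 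Since $v_{\varphi,\tau} \in \cD_2$, convex combinations with $v$ stay in whatever ambient linear space contains $\cD_2$, and the pair is admissible for the joint minimization problem \eqref{inf_aux} (the purpose of passing through $\widetilde{\cE}$ is precisely to bypass the possibility that $\cD_2$ itself is not convex in an ambient sense).

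Next, using the joint minimality $\widetilde{\cE}(u,v) \leq \widetilde{\cE}(u^\varphi_{\vartheta,\pm\tau}, v^\varphi_{\vartheta,\pm\tau})$ from \ref{Hdiff'}, I would write, for each sign and for $\tau>0$ small,
\[ 0 \leq \tfrac{1}{\tau}\bigl(\widetilde{\cR}(u^\varphi_{\vartheta,\pm\tau}, v^\varphi_{\vartheta,\pm\tau}) - \widetilde{\cR}(u,v)\bigr) + \tfrac{1}{\tau}\bigl(\cF(u^\varphi_{\vartheta,\pm\tau} - f) - \cF(u-f)\bigr). \]
The crucial step is then to apply convexity of $\widetilde{\cR}$ jointly in $(w,z)$ to bound
\[ \tfrac{1}{\tau}\bigl(\widetilde{\cR}(u^\varphi_{\vartheta,\pm\tau}, v^\varphi_{\vartheta,\pm\tau}) - \widetilde{\cR}(u,v)\bigr) \leq \tfrac{\vartheta}{\tau}\bigl(\widetilde{\cR}(u^\varphi_{\pm\tau}, v_{\varphi,\pm\tau}) - \widetilde{\cR}(u,v)\bigr), \]
which is exactly the mechanism that in the original lemma turned the difference quotient of $\cR$ along $u^\varphi_{\vartheta,\tau}$ into one along $u^\varphi_\tau$.

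By hypothesis \ref{Hdiff'}, the function $\widetilde{R}_\varphi(\tau) := \widetilde{\cR}(u^\varphi_\tau, v_{\varphi,\tau})$ is differentiable at $\tau = 0$, so the right-hand sides above converge to $+\vartheta\, \widetilde{R}_\varphi'(0)$ and $-\vartheta\, \widetilde{R}_\varphi'(0)$ respectively as $\tau \to 0^+$. Combining with the minimality inequality gives
\[ 0 \leq \pm \vartheta\, \widetilde{R}_\varphi'(0) + \liminf_{\tau \to 0^+} \tfrac{1}{\tau}\bigl(\cF(u^\varphi_{\vartheta,\pm\tau} - f) - \cF(u-f)\bigr), \]
and summing the $+$ and $-$ versions cancels the $\pm \vartheta\, \widetilde{R}_\varphi'(0)$ terms to yield the claim. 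There is no real obstacle here beyond careful sign-keeping in the difference quotient at $-\tau$; that is precisely where full differentiability (as opposed to mere one-sided differentiability) of $\widetilde{R}_\varphi$ at $0$ is used, and it is built into \ref{Hdiff'}. Note that, just as in Lemma~\ref{lem:inner_diff}, no differentiability or convexity assumption on $\cR$ itself is needed---all the convexity is used on $\widetilde{\cR}$, which is exactly what the inf-convolution structure \eqref{inf_conv} provides.
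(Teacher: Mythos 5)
Your proposal is correct and follows essentially the same route as the paper: the paper's proof likewise sets $v_{\varphi,\vartheta,\tau}=\vartheta v_{\varphi,\tau}+(1-\vartheta)v$, invokes joint minimality of $(u,v)$ for $\widetilde{\cE}$, uses convexity of $\widetilde{\cR}$ to dominate the mixed difference quotient by $\tfrac{\vartheta}{\tau}(\widetilde{\cR}(u^\varphi_{\pm\tau},v_{\varphi,\pm\tau})-\widetilde{\cR}(u,v))\to\pm\vartheta\widetilde{R}_\varphi'(0)$, and sums the two signed inequalities. No discrepancies worth noting.
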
 
\begin{proof} 
The proof is the same as the proof of Lemma \ref{lem:inner_diff}, mutatis mutandis. For $\vartheta \in [0,1]$, we denote 
\[v_{\varphi,\vartheta, \tau} = \vartheta v_{\varphi, \tau} + (1 -\vartheta) v.\] 
By minimality of $(u,v)$,
	\begin{multline*} 
 0 \leq \liminf_{\tau \to 0^+}\tfrac{1}{\tau}(\widetilde{\cE}(u^\varphi_{\vartheta, \pm \tau}, v_{\varphi,\vartheta, \pm \tau}) - \widetilde{\cE}(u,v) ) \\ \leq \liminf_{\tau \to 0^+}\tfrac{1}{\tau}(\widetilde{\cR}(u^\varphi_{\vartheta, \pm\tau},v_{\varphi, \vartheta,\pm\tau}) - \widetilde{\cR}(u,v) +\cF(u^\varphi_{\vartheta,\pm\tau} - f) - \cF(u - f)).
 \end{multline*} 
 By our assumption, the function $\widetilde{R}_\varphi\colon ]\!-\!\tau_0, \tau_0[\to [0, \infty[$ defined by 
	\[\widetilde{R}_\varphi(\tau) = \widetilde{\cR}(u^\varphi_\tau, v_{\varphi, \tau})\]
	for $\tau_0$ small enough is differentiable at $\tau = 0$. Thus, by convexity of $\widetilde{\cR}$, 
	\[\tfrac{1}{\tau}(\widetilde{\cR}(u^\varphi_{\vartheta, \pm \tau}, v_{\varphi,\vartheta, \pm \tau}) - \widetilde{\cR}(u,v)) \leq \tfrac{\vartheta}{\tau}(\widetilde{\cR}(u^\varphi_{\pm\tau}, v_{\varphi,\pm \tau}) - \widetilde{\cR}(u,v)) \to \pm \vartheta \widetilde{R}_\varphi'(0) \quad \text{as } \tau \to 0.\]
	Therefore, 
	\begin{equation*} 
		0 \leq \pm \vartheta \widetilde{R}_\varphi '(0) + \liminf_{\tau \to 0^+} \tfrac{1}{\tau}(\cF(u^\varphi_{\vartheta,\pm\tau} - f) - \cF(u - f)). 
	\end{equation*} 
	We conclude by summing together the two obtained inequalities. 
\end{proof}

\subsection{Application}
Now we will discuss conditions under which regularizers of form \eqref{inf_conv} satisfy condition \textit{\ref{Hdiff'}} in the case that
\begin{equation} \label{inf_conv2}\widetilde{\cR}(w,z) = \cR_1(w,z) + \cR_2(z), \text{ where } \cR_1(w,z) = \varrho_1(Dw - z^T \cL^m)(\Omega) 
\end{equation} 
and $\cR_2 \colon \cD_2 \to [0, \infty]$ is given by one of the following 
\begin{itemize}
\item $\cR_2(z) = TD_{\varrho_2}(z) = \varrho_2(Ez)(\Omega)$, $\cD_2 = BD(\Omega)$, 
\item $\cR_2(z) = TV_{\varrho_2}(z) = \varrho_2(Dz)(\Omega)$, $\cD_2 = BV(\Omega)^m$, 
\item $\cR_2(z) = TV_{\varrho_2}(z) = \varrho_2(Dz)(\Omega)$, $\cD_2 = \{z \in BV(\Omega)^m\colon z^T = D \widetilde{z},\ \widetilde{z} \in BV(\Omega)\}$.
\end{itemize} 
For simplicity, we only consider the case $n=1$ here. We assume that $\varrho_1$, $\varrho_2$ are convex. If $\varrho_1 = |\cdot|$, $\varrho_2 = |\cdot|$, $\cR$ coincides with $TGV$, $nsTGV$ and $ICTV$ respectively. However, we are unable to show that \textit{\ref{Hdiff'}} holds in those cases. Instead, we need to consider partially regularized versions of those functionals.
As before, we make the assumption that $\varrho_1$ satistfies~\eqref{lin_growth},
while we make the assumption that $\varrho_2$ has growth $p\ge 1$: there exist
$C_1$, $C_2$, with:
\begin{equation}\label{eq:grho}
C_1 (|M|^p-1) \le \varrho_2(M)\le C_2(|M|^p+1)
\end{equation}
for any $M\in \bR^{m\times m}$.

\begin{thm} \label{thm:diff_ex2} 
Let $\widetilde{\cR}$ be given by \eqref{inf_conv2}. Assume that \ref{Diffac} and \ref{Diffs} from Theorem \ref{thm:diff_ex} hold with $\varrho = \varrho_1$ and that $\varrho_2$, $\varrho_2^\infty$ are differentiable. For $z \in \cD_2$, $\varphi \in C^\infty_c(\Omega)^m$ and $\tau$ in a neighborhood of $0$ we set 
\[z_{\varphi, \tau}(x) = (I + \tau D\varphi(x))^T z(x + \varphi(x)). \]
Then for any $w \in BV(\Omega)$, $z \in \cD_2$ the map $\tau \mapsto \widetilde{\cR}(w^\varphi_\tau, z_{\varphi,\tau})$ is differentiable at $\tau = 0$. 
\end{thm}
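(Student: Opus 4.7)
The plan is to treat the two summands of $\widetilde{\cR} = \cR_1 + \cR_2$ in $\widetilde{\cR}(w^\varphi_\tau, z_{\varphi,\tau})$ separately, since $\cR_2$ does not depend on $w$ and the variation $z_{\varphi,\tau}$ has been chosen precisely so that the combined measure $Dw - z^T \cL^m$ entering $\cR_1$ transforms covariantly under $F_\tau(x) = x + \tau \varphi(x)$.

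For $\cR_1$, I would argue as in Lemma~\ref{lem:change_var}: combining the change-of-variables identity \eqref{change_test} applied to $w^\varphi_\tau = w \circ F_\tau$ with the elementary Lebesgue change of variables for the absolutely continuous measure $z_{\varphi,\tau}^T\cL^m$, a direct computation yields, for any continuous test field $h$,
\[
\int (h\circ F_\tau) \cdot \dd\bigl(Dw^\varphi_\tau - z_{\varphi,\tau}^T\cL^m\bigr)
= \int |\det D(F_\tau^{-1})|\, h \cdot \dd\bigl((Dw - z^T\cL^m)\, DF_\tau\circ F_\tau^{-1}\bigr).
\]
Feeding this into the dual representation \eqref{dual_rep} of $\varrho_1$ and arguing exactly as in the proof of Lemma~\ref{lem:change_var} gives
\[
\cR_1(w^\varphi_\tau, z_{\varphi,\tau})
= \int_\Omega |\det D(F_\tau^{-1})|\, \dd \varrho_1\bigl((Dw - z^T\cL^m)\, DF_\tau \circ F_\tau^{-1}\bigr).
\]
Differentiability at $\tau=0$ then follows verbatim from the proof of Theorem~\ref{thm:diff_ex}: decompose the right-hand side into its absolutely continuous and singular parts with respect to $|Dw - z^T\cL^m|$, apply \ref{Diffac} to the former and, via Alberti's rank-one theorem, \ref{Diffs} to the latter, and conclude by dominated convergence using the Lipschitz continuity of $\varrho_1$ and $\varrho_1^\infty$ implied by \eqref{lin_growth}.

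For $\cR_2$, the key computation, verified first for smooth $z$ by the chain rule, is
\[
E z_{\varphi,\tau} = DF_\tau^T\,(Ez\circ F_\tau)\,DF_\tau + \tau\,(D^2\varphi \cdot z\circ F_\tau),
\]
(and analogously for $Dz_{\varphi,\tau}$ in the $TV_{\varrho_2}$ case) where the second term involves the smooth bounded field $D^2\varphi$ contracted with $z$. In the $ICTV$ case, writing $z = D\widetilde{z}$ gives the cleaner identity $z_{\varphi,\tau} = D(\widetilde{z}\circ F_\tau)$, so that $\cD_2$ is preserved and one reduces to a change-of-variables formula for $D^2(\widetilde{z}\circ F_\tau)$ of the same shape. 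Approximating $z$ by smooth functions (standard in $BD$ and $BV^m$) and passing to the limit via the dual representation of $\varrho_2$ extends these identities to the measure level, yielding an expression of the form
\[
\cR_2(z_{\varphi,\tau}) = \int_\Omega |\det D(F_\tau^{-1})|\,\dd\varrho_2\bigl(G_\tau^T\,(Ez)\,G_\tau\circ F_\tau^{-1} + \tau\,K_\tau\, z\circ F_\tau^{-1}\bigr)
\]
with $G_\tau, K_\tau$ smooth, $G_0=I$, $K_0 = D^2\varphi$. Differentiability at $\tau=0$ then follows by applying the assumed differentiability of $\varrho_2$ and $\varrho_2^\infty$ pointwise to the absolutely continuous and singular parts of $Ez$, with dominated convergence justified by the growth bound \eqref{eq:grho} on $\varrho_2$ together with $z\in L^p(\Omega)^m$ (which follows from $z\in BD(\Omega)$ or $BV(\Omega)^m$ by Sobolev embedding for the relevant range of $p$; in the borderline case $p=1$ the $L^1$ bound suffices).

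I expect the main obstacle to be the rigorous extension of the formula for $Ez_{\varphi,\tau}$ (and $Dz_{\varphi,\tau}$) to the measure level when $z$ is merely in $BD$ or $BV^m$. The first-order pullback piece $DF_\tau^T (Ez\circ F_\tau) DF_\tau$ can be handled by the same duality argument as in Lemma~\ref{lem:change_var}, but the extra second-order term $\tau\, D^2\varphi \cdot z\circ F_\tau$ has no direct analogue in that lemma and must be dealt with by smooth approximation, checking that it converges in $L^1$ (since $z\in L^1$) while the principal part converges weakly-$*$ as measures; once this approximation scheme is set up, the differentiation at $\tau=0$ is routine and parallels Theorem~\ref{thm:diff_ex} verbatim.
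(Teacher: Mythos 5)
Your proposal is correct and follows essentially the same route as the paper: the same change-of-variables identities (the paper's Lemma~\ref{lem:change_var2}, obtained via the dual representation \eqref{dual_rep} and weak-$*$ approximation by smooth functions, with the left factor $DF_\tau^T$ ensuring that only $Ez$ and $z$, not the skew part of $Dz$, enter), followed by pointwise differentiation at $\tau=0$ and dominated convergence using \eqref{lin_growth}, \eqref{eq:grho} and Alberti/De Philippis--Rindler for the singular parts. The one point to state a bit more carefully is the domination for $\cR_2$: for $p>m/(m-1)$ the bound $z\in L^p$ does not come from the $BD$ (or $BV$) embedding alone but from $e(z)\in L^p$ combined with Korn's inequality, which is exactly what the paper invokes.
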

\begin{remark} In case $p=1$, one may assume that $\varrho_2^\infty$ is differentiable
at rank-one matrices only (``rank-one symmetric'' matrices [which may have rank 2] for the case of $TD_{\varrho_2}$).
\end{remark}
As in the case of Theorem \ref{thm:diff_ex}, the proof of Theorem \ref{thm:diff_ex2} relies on change of variables formulae similar to \eqref{rep_formula}. 
\begin{lemma}\label{lem:change_var2} 
    Let $F \colon \Omega \to \Omega$ be a diffeomorphism ($C^2$ up to the boundary) and let $w \in BV(\Omega)$, $z \in \cD_2$. Then
    \begin{equation}\label{rep_formula21} \cR_1(w \circ F, DF^T \, z \circ F ) = \int_\Omega |\det D (F^{-1})|\, \dd \varrho_1((Dw - z^T\cL^m)\, DF\circ F^{-1}),  
\end{equation}
\begin{equation} \label{rep_formula22}
TV_{\varrho_2}(DF^T\,z \circ F) =  \int_\Omega |\det D (F^{-1})|\, \dd \varrho_2\left(DF^T\circ F^{-1}\, Dz\, DF\circ F^{-1} + D^2F^T\circ F^{-1}\, z \cL^m\right), 
\end{equation} 
\begin{equation} \label{rep_formula23}
TD_{\varrho_2}(DF^T z \circ F) = \int_\Omega |\det D (F^{-1})|\, \dd \varrho_2\left(DF^T\circ F^{-1}\, Ez\, DF\circ F^{-1} 
+ D^2F^T\circ F^{-1} \,z \cL^m\right). 
\end{equation} 
\end{lemma}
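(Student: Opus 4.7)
My strategy is to follow the scheme of Lemma~\ref{lem:change_var} in all three cases: approximate $w$ and $z$ by smooth functions, apply the classical chain rule, change variables in the resulting test integrals against continuous compactly supported fields, and conclude by means of the dual representation~\eqref{dual_rep} together with the invariance~\eqref{change_star} applied to $\varrho_1^*$ or $\varrho_2^*$.

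For \eqref{rep_formula21}, I would observe that Lemma~\ref{lem:change_var} applies essentially verbatim with the measure $Dw$ replaced by $\mu := Dw - z^T\cL^m \in M(\Omega)^{1\times m}$. Indeed, for smooth $w$ the chain rule gives $D(w\circ F) - (DF^T\,z\circ F)^T\cL^m = Dw\circ F\,DF - z^T\circ F\,DF\,\cL^m$. Taking smooth approximations $w_k\to w$ with $Dw_k\weaklystar Dw$, passing to the limit in the resulting analog of~\eqref{change_test}, and then taking the supremum against $h\in D_{\varrho_1}$ (using~\eqref{change_star}) yields~\eqref{rep_formula21}.

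For~\eqref{rep_formula22} I would additionally approximate $z$ by $z_k\in C^\infty(\Omega)^m$ with $z_k\to z$ in $L^1$ and $|Dz_k|(\Omega)\to |Dz|(\Omega)$, so that $Dz_k\weaklystar Dz$ in $M(\Omega)^{m\times m}$. For smooth $z_k$, the classical chain rule gives pointwise
\[
D(DF^T z_k\circ F)(x) = DF^T(x)\,Dz_k(F(x))\,DF(x) + D^2F^T(x)\,z_k(F(x)),
\]
where the $C^2$ assumption on $F$ is crucial for the second term. Testing against $h\in C_c(\Omega)^{m\times m}$, changing variables in each term, and letting $k\to\infty$ yields the analog of~\eqref{change_test} with right-hand side equal to the measure in~\eqref{rep_formula22}. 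Invoking the dual representation of $\varrho_2$ and~\eqref{change_star} (with $h\mapsto h\circ F$ preserving $D_{\varrho_2}$) then concludes the proof exactly as in Lemma~\ref{lem:change_var}. The argument for~\eqref{rep_formula23} is the same after symmetrization: the symmetric part of $DF^T\,Dz\circ F\,DF$ is precisely $DF^T\,Ez\circ F\,DF$, while the symmetric part of $D^2F^T\,z\circ F$ is what we denote by $D^2F^T\,z\circ F$ on the right-hand side of~\eqref{rep_formula23}.

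\paragraph{Main obstacle.} The delicate point is justifying the weak-$*$ convergence $DF^T\,Dz_k\circ F\,DF \weaklystar DF^T\,Dz\circ F\,DF$ (and its $Ez$ analog) in $M(\Omega)^{m\times m}$. Formally, after testing against $h\in C_c(\Omega)^{m\times m}$ and changing variables, one must show that $DF\,h\circ F^{-1}\,DF^T$ remains a continuous compactly supported field on which to test $Dz_k\weaklystar Dz$; this is precisely where the $C^2$ regularity of $F$ up to the boundary is fully used. A secondary subtlety, most visible in the $BD$ setting, is to verify that $DF^T\,z\circ F$ belongs to $\cD_2$ whenever $z\in\cD_2$ so that the left-hand side of~\eqref{rep_formula22}, \eqref{rep_formula23} makes sense; this follows from the same chain-rule identity, the $L^1$ bound on $z$, and the boundedness of $DF,D^2F$.
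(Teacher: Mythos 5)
Your treatment of \eqref{rep_formula21} and \eqref{rep_formula22} is essentially the paper's own proof: replace $Dw$ by $Dw-z^T\cL^m$, approximate $w$ and $z$ by smooth functions, change variables in the test integrals against $h\in C_c(\Omega)^{m\times m}$, and conclude via the dual representation \eqref{dual_rep} together with \eqref{change_star}. No objection there.

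The one genuine gap is in \eqref{rep_formula23}. For $z\in BD(\Omega)\setminus BV(\Omega)^m$ the full distributional gradient $Dz$ is \emph{not} a bounded measure, so the object $DF^T\,Dz\circ F\,DF$ whose ``symmetric part'' you take does not exist, and the weak-$*$ convergence $DF^T\,Dz_k\circ F\,DF\weaklystar DF^T\,Dz\circ F\,DF$ that you identify as the main obstacle is false in general: the masses $|Dz_k|(\Omega)$ of a smooth approximating sequence need not stay bounded, only $|Ez_k|(\Omega)$ does. Saying ``the argument is the same after symmetrization'' therefore hides the actual content of this case. The repair, which is what the paper does, is to (i) assume without loss of generality that $\varrho_2(\xi)=\varrho_2((\xi+\xi^T)/2)$, so that the supremum in \eqref{dual_rep} may be restricted to \emph{symmetric-valued} $h$, and (ii) observe that after the chain rule the coefficient $h_{ij}(F)\,\partial_iF_l\,\partial_jF_k$ multiplying $\partial_l (z_k)_{\cdot}$ is symmetric in $(k,l)$, so the identity for smooth $z_k$ involves only $Ez_k$ and never the uncontrolled skew part of $Dz_k$; one then passes to the limit along a sequence with $z_k\to z$ in $L^1$ and $Ez_k\weaklystar Ez$ (weak-$*$ density of smooth functions in $BD(\Omega)$). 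Your algebraic remark that the symmetric part of $DF^T M\,DF$ is $DF^T\tfrac{M+M^T}{2}DF$ is exactly the computation needed in step (ii), but it must be deployed at the level of the test integral, not applied to a nonexistent measure; the left multiplication by $DF^T$ in the definition of the transported field is precisely what makes this reduction to $Ez$ possible. A minor further point: continuity of the pulled-back test field requires only $C^1$ regularity of $F$; the $C^2$ hypothesis is needed (as you correctly note earlier) for the $D^2F^T\,z$ term.
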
 

\begin{proof} 
The proof follows along the lines of Lemma \ref{lem:change_var}. In the case of \eqref{rep_formula21}, we take as before a sequence $(w_k) \subset C^\infty(\Omega)$ that converges weakly-$*$ in $BV(\Omega)$ to $w$ and show that $w_k \circ F \weaklystar w \circ F$ in $BV(\Omega)$. We also take $(z_k) \subset C^\infty(\Omega)^m$ such that $z_k  \to z$ in $L^1(\Omega)^m$; then $z_k\circ F  \to z\circ F$ in $L^1(\Omega)^m$ as well. For any $h \in C_c(\Omega)^m $, we calculate 
    \begin{multline*} 
    \int_\Omega h \circ F \cdot (D (w_k \circ F) - (DF^T z_k\circ F)^T)  = \int_\Omega h \circ F \cdot (Dw_k \circ F - z_k^T \circ F) DF \\ = \int_\Omega h \cdot (Dw_k - z_k^T) DF \circ F^{-1} |\det D(F^{-1})|.     
    \end{multline*} 
    Passing to the limit $k\to \infty$, 
    \begin{equation*} 
    \int_\Omega h \circ F \cdot \dd (D (w \circ F) + (DF^T z\circ F)^T \cL^m) = \int_\Omega |\det D(F^{-1})| h \cdot \dd (Dw + z^T \cL^m) \, DF \circ F^{-1} . 
    \end{equation*} 
    Using \eqref{dual_rep} as before we deduce \eqref{rep_formula21}.     Demonstrations of \eqref{rep_formula22} and \eqref{rep_formula23} again follow the same footsteps. 
    Let us give some details for the case of \eqref{rep_formula23}. We refer to the proof of~\cite[Thm.~3.2]{TemamPlasticite} which closely follows~\cite[1.17]{Giusti}, \cite{AnzellottiGiaquinta} to assert the (weak-$*$) density of smooth functions in $BD(\Omega)$. Then, we will check that the left multiplication by $DF^T$ in the change of variable $DF^T z\circ F$ is the crucial point which ensures that the symmetrized gradient of the transported function can be estimated in terms of $Ez$ only, and $z$ (which is in $L^{m/(m-1)}$ thanks
    to Korn--Poincar\'e's inequality, see~\cite[Sec.~1.2]{TemamPlasticite}), and does not depend on the skew-symmetric part of $Dz$ which is not controlled. 
    We assume without loss of generality that for any $m\times m$ matrix $\xi$, $\varrho_2(\xi)=\varrho_2((\xi+\xi^T)/2)$ depends only on the symmetric part of $\xi$. In that case, \eqref{dual_rep} may be written:
    \begin{equation*}
    \int_\Omega\varphi \dd\rho_2(Ez) = \sup_{h} \int_\Omega \varphi h \cdot \dd Dz - \int_\Omega \varphi \varrho^*(h) \dd \cL^m ,
    \end{equation*} 
    with the $\sup$ taken on all smooth, \textit{symmetric-valued} $h$ with compact
    support. Then, assuming first that $z$ is smooth, we write (summing implicitly on repeated indices):
    \begin{align*}
     \int_\Omega h\circ F \cdot D(DF^T & z\circ F)\dd x
    = \int_\Omega h_{i,j}(F) \partial_i (\partial_j F_k z_k(F)) \dd x\\
    = & \int_\Omega h_{i,j}(F) \left(\partial_{i,j}^2 F_k z_k(F) + 
    \partial_j F_k \partial_i F_l \partial_l z_k(F)
    \right)\dd x \\
    = &
    \int_\Omega |\det D(F^{-1})|h_{i,j} \left(\partial_{i,j}^2 F_k\circ F^{-1} z_k + 
    \partial_j F_k\circ F^{-1} \partial_i F_l \circ F^{-1} (Ez)_{k,l}
    \right)\dd y 
    \end{align*}
    where in the last line we have
    used that $h_{i,j}(F) \partial_i F_l \partial_j F_k$ is symmetric in $(k,l)$.
    As before, we deduce (by approximation with smooth functions) that
    this still holds for any $z\in BD(\Omega)$. Substracting~\eqref{change_star} from
    both sides and taking the supremum over $h$, 
    we deduce~\eqref{rep_formula23}.
    \end{proof}

\begin{proof}[Proof of Theorem \ref{thm:diff_ex2}]
The proof follows along the lines of Theorem \ref{thm:diff_ex}. In the case of $ICTV$-type regularizer we need to note that if $z = D\widetilde{z}^T$ with $\widetilde{z} \in BV(\Omega)$, then $DF^T z \circ F = D(\widetilde{z} \circ F)^T$, in particular $DF^T z \circ F \in \cD_2$. We detail the proof in the $TD_{\rho_2}$ case and leave the other cases to the reader.
We consider diffeomorphisms of the form $F_\tau(x) = 
 x+\varphi(x)\nu$ for $\tau\in\bR$ (small), $\nu$ a unit vector and $\varphi$ a smooth function with compact support.
The term $\cR_1$ will be differentiable as before, so we consider $\cR_2$, which decomposes as:
\begin{multline}\label{eq:difficultterm}
\int_\Omega \det D(F_\tau^{-1}(x))\varrho_2( D^2F^T_\tau(F_\tau^{-1}(x))z(x) 
+ DF^T_\tau(F_\tau^{-1}(x)) e(z)(x) DF_\tau(F_\tau^{-1}(x)) \dd x
\\ +
\int_\Omega \det D(F_\tau^{-1}(x))\varrho_2^\infty(DF_\tau(F_\tau^{-1})^T M_z DF_\tau(F_\tau^{-1})) \dd |E^s z|
\end{multline}
where $M_z$ is the matrix in the polar decomposition of $Ez$ with respect to $|Ez|$.

In case $p>1$, the second integral is not there since $\varrho_2^\infty\equiv \infty$
and $E(z)$ is absolutely continuous.
On the other hand if $p=1$, that second integral is differentiable at $\tau=0$
as soon as $\varrho_2^\infty$ is differentiable at non-zero rank-one symmetric
matrices, since $M_z$ has such structure $E^s z$-a.\,e.~thanks 
to~\cite[Thm. 2.3]{DePhilippisRindler}.

Differentiating the first integral is more subtle.
Indeed, now, if $e(z)(x)=0$, the term in the absolutely continuous integral
does not vanish and is given by $\det D(F_\tau^{-1})(x) \varrho_2(\tau z(x)\cdot\nu D^2\varphi(x))$ which is not differentiable if $\varrho_2$ is not differentiable
at $0$, for instance in the one-homogeneous case of the standard ``$TGV$''.
Assuming that $\varrho_2$ is $C^1$, then, one can write:
\begin{align*}
\int_\Omega &\det D(F_\tau^{-1})\varrho_2( D^2F^T_\tau(F_\tau^{-1})z + DF_\tau(F_\tau^{-1})^T e(z) DF_\tau(F_\tau^{-1})) \dd x
\\ & = \int_\Omega \varrho_2(e(z))\dd x 
\\ & + \int_\Omega (\det D(F_\tau^{-1})-1)\varrho_2(D^2F^T_\tau(F_\tau^{-1})z + DF_\tau(F_\tau^{-1})^T e(z) DF_\tau(F_\tau^{-1})) \dd x
\\ & + \int_0^\tau 
\int_\Omega 
D\varrho_2\Big( e(z)+ s (z\cdot\nu D^2 \varphi + 2(e(z) \nu)\odot \nabla\varphi)
+ s^2 (e(z)\nu)\cdot\nu \nabla\varphi\otimes\nabla\varphi\Big)
\\ &\hspace{3cm} \cdot \Big( z\cdot\nu D^2 \varphi + 2(e(z) \nu)\odot \nabla\varphi
+ 2s (e(z)\nu)\cdot\nu \nabla\varphi\otimes\nabla\varphi\Big) \dd x\dd s,
\end{align*}
where the notation $a\odot b$ stands for the symmetric tensor product
$(a\otimes b + b\otimes a)/2$.
For any two matrices $A,B$,
\[
\varrho_2(A\pm B) -\varrho_2(A) \ge \pm D\varrho_2(A)\cdot B
\]
and one deduces from the growth assumption~\eqref{eq:grho}
that there is $C>0$ such that
\[
|D\varrho_2(A)\cdot B| \le C(|A|^p+|B|^p + 1).
\]
This allows to bound the integrand in the last formula by $C' (1+|e(z)|^p+|z|^p)\in L^1(\Omega)$ (again, thanks to Korn or Poincar\'e--Korn's inequality) for some constant $C'>0$, and apply Lebesgue's dominated convergence to deduce that~\eqref{eq:difficultterm} is differentiable at $\tau=0$.
\end{proof}
\begin{remark}
    If $\varrho_2$ is $1$-homogeneous, we do now know whether the result holds.
    It is however likely that the condition \textit{\ref{Hdiff'}} is not general enough
    to lead to a conclusion, and that one might need a more complicated decomposition
    of the functions, as suggested in~\cite{Valkonen2017}. On the other hand
    the result in the cases $p>1$ is already proved in that reference.
\end{remark}

\section{Data of unbounded variation}\label{sec:nonBV}

In this section, we discuss the case where $f\not\in BV(\Omega)^m$ and we only address the simplest case where the data term is strongly convex and with Lipschitz gradient, that is, verifies~\eqref{sconv}. We introduce a weaker description of a ``jump set'' (which for $BV$ functions coincides with the standard jump set up
to a negligible set), for which we are still able to deduce jump inclusion. 
For $f \in L^2_\loc(\Omega)^n$, $x_0 \in \Omega$, $\nu_0 \in \bS^{m-1}$ we define 
\[j_{f, \nu_0}(x_0)^2 := \limsup_{\tau \to 0^+} \fint_{Q_\tau^-(x_0, \nu_0)} |f(x + \tau \nu_0) - f(x)|^2 \,\dd x, \qquad j_f(x_0):= \sup_{\nu_0 \in \bS^{m-1}}j_{f, \nu_0}(x_0) . \]
(See~\eqref{eq:notation} for the notation $Q^\pm_r(x,\nu)$.)
We denote by $\widetilde{J}_f$ the set of $x_0 \in \Omega$ such that $j_f(x_0) > 0$. 

\begin{prop} \label{prop:L1jump}
For $f \in L^2_\loc(\Omega)^n$, $x_0 \in \Omega$, $\nu_0 \in \bS^{m-1}$ we have 
\[j_{f, \nu_0}(x_0)^2 \leq 4 \limsup_{\tau \to 0^+}  \fint_{Q_\tau(x_0, \nu_0)} \bigg|f - \fint_{Q_\tau(x_0, \nu_0)}f\bigg|^2 .\]
In particular, if $x_0$ is a ($2$-)Lebesgue point of $f$, then $j_f(x_0) = 0$.
If $f \in L^\infty_\loc(\Omega)^n$, then 
\[ J_f \subset \widetilde{J}_f \subset S_f.\]
Moreover, for $x_0 \in J_f$,
$j_f(x_0)=j_{f, \nu_f(x_0)}(x_0) = |f^+ - f^-|(x_0)$. 
\end{prop}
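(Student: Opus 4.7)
The plan is to work through the four assertions in order. For the first inequality, I would apply the convexity bound $|a-b|^2 \le 2|a-c|^2 + 2|b-c|^2$ with $a=f(x+\tau\nu_0)$, $b=f(x)$, and $c=\bar{f}_\tau:=\fint_{Q_\tau(x_0,\nu_0)} f$, integrate over $Q_\tau^-(x_0,\nu_0)$, and use that the translation $x\mapsto x+\tau\nu_0$ is a measure-preserving bijection from $Q_\tau^-(x_0,\nu_0)$ onto $Q_\tau^+(x_0,\nu_0)$. The two resulting pieces combine into $2\int_{Q_\tau(x_0,\nu_0)}|f-\bar{f}_\tau|^2$, and the factor $4$ arises from $|Q_\tau^-|=\tfrac12|Q_\tau|$. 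The Lebesgue-point statement is then immediate: by the best-$L^2$-approximation property of the mean one has $\fint_{Q_\tau}|f-\bar{f}_\tau|^2 \le \fint_{Q_\tau}|f-z|^2$, which vanishes through the inclusion $Q_\tau(x_0,\nu_0)\subset B_{\tau\sqrt{2}}(x_0)$ and comparability of volumes. Under local boundedness, $p$-Lebesgue points are independent of $p$ and coincide with approximate continuity points, so the contrapositive yields $\widetilde{J}_f\subset S_f$.

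The main work concerns the identification of $j_f(x_0)$ at a jump point, and my approach is to rescale. Setting $f_\tau(y):=f(x_0+\tau y)$ and substituting $x=x_0+\tau y$,
\[
\fint_{Q_\tau^-(x_0,\nu_0)} |f(x+\tau\nu_0)-f(x)|^2\dd x = \fint_{Q_1^-(0,\nu_0)} |f_\tau(y+\nu_0)-f_\tau(y)|^2\dd y.
\]
The jump condition at $x_0$ combined with the local bound $|f-f^\pm|^2 \le C\,|f-f^\pm|$, available since $f\in L^\infty_{\loc}$, upgrades the $L^1$ convergence of blow-ups to $L^2_{\loc}$-convergence of $f_\tau$ to the step function $\bar{f}:=f^+\chi_{\{y\cdot\nu_f\ge 0\}}+f^-\chi_{\{y\cdot\nu_f<0\}}$. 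Passing to the limit in the rescaled integral, which is routine given the uniform $L^\infty$ bound on the integrand, gives
\[
j_{f,\nu_0}(x_0)^2 = \fint_{Q_1^-(0,\nu_0)} |\bar{f}(y+\nu_0)-\bar{f}(y)|^2\dd y,
\]
and the pointwise inequality $|\bar{f}(y+\nu_0)-\bar{f}(y)|\le |f^+-f^-|$ then produces $j_{f,\nu_0}(x_0)\le |f^+-f^-|(x_0)$ for every $\nu_0$. Choosing $\nu_0=\nu_f(x_0)$ makes every $y\in Q_1^-(0,\nu_f)$ satisfy $y\cdot\nu_f\in\,]\!-\!1,0]$ and $(y+\nu_f)\cdot\nu_f\in\,]0,1]$, so the integrand equals $|f^+-f^-|^2$ almost everywhere and equality holds; this simultaneously proves $J_f\subset\widetilde{J}_f$ and that the supremum is attained at $\nu_f$ with value $|f^+-f^-|(x_0)$.

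The delicate step, which I would check most carefully, is the passage to the limit in the rescaled quadratic integral. Although both ingredients---the upgrade of the $L^1$ jump condition to $L^2_{\loc}$-convergence of blow-ups, and the stability of the squared difference of translates under this convergence---are standard under $f\in L^\infty_{\loc}$, one must control the cross-terms uniformly in $\tau$. Telescoping $f_\tau(\cdot+\nu_0)-f_\tau(\cdot)$ against $\bar{f}(\cdot+\nu_0)-\bar{f}(\cdot)$ and applying Cauchy--Schwarz with the uniform bound $\|f_\tau\|_{L^\infty(Q_1^-)}\le \|f\|_{L^\infty(B_{r_0}(x_0))}$ for small $\tau$ should settle this without difficulty.
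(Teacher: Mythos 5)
Your argument is correct, and for the first three assertions it follows essentially the paper's own route: the same splitting $|a-b|^2\le 2|a-c|^2+2|b-c|^2$ with $c=\fint_{Q_\tau}f$ and the measure-preserving translation $Q_\tau^-\to Q_\tau^+$ producing the factor $4$; the same comparison of $Q_\tau(x_0,\nu_0)$ with $B_{\sqrt{2}\tau}(x_0)$ for the Lebesgue-point statement; and the same boundedness upgrade $\fint|f-z|^2\le 2\|f\|_{\infty}\fint|f-z|$ for $\widetilde{J}_f\subset S_f$. One small caveat on the latter: since the claim is an inclusion of sets, not an a.e.\ statement, you should run the last step with the approximate limit $z$ of $f$ at a point of $\Omega\setminus S_f$ rather than invoking Lebesgue points (whose definition pins the limit to the pointwise value $f(x_0)$); your first inequality, being phrased via the best $L^2$ constant $\fint_{Q_\tau}f$, accommodates this with no change.

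Where you genuinely diverge is the identification $j_f(x_0)=j_{f,\nu_f(x_0)}(x_0)=|f^+-f^-|(x_0)$ at jump points. The paper works at fixed scale $\tau$: the lower bound comes from the triangle inequality in $L^2(Q_\tau^-(x_0,\nu_f))$, and the upper bound for an arbitrary direction $\nu$ is obtained by cutting $Q_\tau^\pm(x_0,\nu)$ into six regions $A^{0,\pm}$, $A^{+,\pm}$, $A^{-,\pm}$ according to the positions of $x$ and $x\mp\tau\nu$ relative to the jump hyperplane, showing the off-diagonal regions contribute $o(1)$, and applying the triangle inequality on $A^{0,-}$. Your blow-up argument replaces all of this: the $L^\infty_{loc}$ bound upgrades the $L^1$ jump condition to $f_\tau\to\bar f$ in $L^2(B_R)$ for each fixed $R$, and then the rescaled quantity $\fint_{Q_1^-(0,\nu)}|f_\tau(y+\nu)-f_\tau(y)|^2\,\dd y$ converges (the limsup is in fact a limit) to the explicit integral of $|\bar f(y+\nu)-\bar f(y)|^2$, which you compute directly; equality for $\nu=\nu_f$ and the inequality for general $\nu$ both drop out of the pointwise structure of the step function. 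The ``delicate step'' you flag is actually immediate from the reverse triangle inequality in $L^2(Q_1^-(0,\nu))$, since $Q_1^-(0,\nu)$ and its translate by $\nu$ both sit inside $B_2$ where $f_\tau\to\bar f$ in $L^2$; no telescoping of cross-terms is needed. Your route is shorter and makes the geometric content transparent (what matters is the measure of the set of $y$ for which $y$ and $y+\nu$ straddle the jump hyperplane), while the paper's fixed-scale decomposition avoids introducing blow-up limits at the cost of the six-set bookkeeping.
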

See Section~\ref{sec:SJ} or~\cite[Sec.~3]{afp} for the definition
of $J_f$, $S_f$.

\begin{proof}
First of all, we indeed have 
\begin{multline*}j_{f, \nu_0}(x_0)^2 \leq 2\limsup_{\tau \to 0^+} \fint_{Q^-_\tau(x_0, \nu_0)} \bigg|f(x + \tau \nu_0) - \fint_{Q_\tau(x_0, \nu_0)}f\bigg|^2 + \bigg|f(x) - \fint_{Q_\tau(x_0, \nu_0)} f\bigg|^2\dd x \\ = 2\limsup_{\tau \to 0^+} \fint_{Q^+_\tau(x_0, \nu_0)} \bigg|f(x) - \fint_{Q_\tau(x_0, \nu_0)}f\bigg|^2\dd x + \fint_{Q^-_\tau(x_0, \nu_0)}\bigg|f(x) - \fint_{Q_\tau(x_0, \nu_0)} f\bigg|^2\dd x \\ = 4 \limsup_{\tau \to 0^+}  \fint_{Q_\tau(x_0, \nu_0)} \bigg|f - \fint_{Q_\tau(x_0, \nu_0)}f\bigg|^2.
\end{multline*} 
It is known that Lebesgue points calculated with respect to different \emph{regular families} of sets are the same \cite[Chapter 5, Section 12]{DiBenedetto}. In particular in our case, using Jensen's inequality and observing that $B_{\sqrt{2}\tau}(x_0)\supseteq Q_\tau(x_0, \nu_0)$, we have for every $\nu_0 \in \bS^{m-1}$
\begin{multline*}
  \fint_{Q_\tau(x_0, \nu_0)} \bigg|f - \fint_{Q_\tau(x_0, \nu_0)}f\bigg|^2 \leq 2 \fint_{Q_\tau(x_0, \nu_0)} \bigg|f -\fint_{{B}_{\sqrt{2}\tau}(x_0)}f\bigg|^2
  + 2 \bigg|\fint_{Q_\tau(x_0, \nu_0)} f -
   \fint_{{B}_{\sqrt{2}\tau}(x_0)}f\bigg|^2
   \\ \leq 4 \fint_{Q_\tau(x_0, \nu_0)} \bigg|f - \fint_{{B}_{\sqrt{2}\tau}(x_0) 
   }
   f\bigg|^2 \leq 4 \frac{\cL^m({B}_{\sqrt{2}\tau}(x_0))}{\cL^m(Q_\tau(x_0, \nu_0))}
   \fint_{{B}_{\sqrt{2}\tau}(x_0)} \bigg|f - \fint_{{B}_{\sqrt{2}\tau}(x_0)} f\bigg|^2
\end{multline*} 
Thus, if $x_0$ is a ($2$-)Lebesgue point of $f$, then $j_{f, \nu_0}(x_0) = 0$ for every $\nu_0 \in \bS^{m-1}$, whence $j_{f}(x_0) = 0$. 

Now suppose that $f \in L^\infty_\loc(\Omega)^n$. If $x_0 \in \Omega\setminus S_f$, then 
\[\fint_{B_\tau(x_0)} \bigg|f - \fint_{B_\tau(x_0)}f\bigg|^2 \leq 2 \|f\|_{L^\infty(B_\tau(x_0))} \fint_{B_\tau(x_0)} \bigg|f - \fint_{B_\tau(x_0)}f\bigg| \to 0 \quad \text{as } \tau \to 0^+,  \]
so $j_f(x_0) =0$, i.\,e.\ $x_0 \in \Omega\setminus \widetilde{J}_f$. On the other hand, if $x_0 \in J_f$ and $\nu_0$ is the direction of jump of $f$ at $x_0$, then by the triangle inequality in $L^2(Q^-_\tau(x_0, \nu_0))^n$, 
\begin{multline*} 
\sqrt{\fint_{Q^-_\tau(x_0, \nu_0)} |f(x + \tau \nu_0) - f(x)|^2\, \dd x} \\ \geq -\sqrt{\fint_{Q^+_\tau(x_0, \nu_0)} |f - f^+(x_0)|^2} + |f^+(x_0) - f^-(x_0)| - \sqrt{\fint_{Q^-_\tau(x_0, \nu_0)} |f - f^-(x_0)|^2} . 
\end{multline*} 
Since 
\[\fint_{Q^\pm_\tau(x_0, \nu_0)} |f - f^\pm(x_0)|^2 \leq 2 \|f\|_{L^\infty(Q^\pm_\tau(x_0, \nu_0))^n} \fint_{Q^\pm_\tau(x_0, \nu_0)} |f - f^\pm(x_0)| \to 0 \quad \text{as } \tau \to 0^+, \]
we obtain 
\[ j_f(x_0) \geq j_{f, \nu_0}(x_0)\geq |f^+(x_0) - f^-(x_0)|,\]
in particular $x_0 \in \widetilde{J}_f$. It remains to prove the opposite inequality. Let $\nu \in \bS^{m-1}$ and let $q=q(\nu_0, \nu) \ge 1$ be the smallest number such that $Q_\tau(x_0, \nu) \subset Q_{q\tau}(x_0, \nu_0)$. We stress that $q$ does not depend on $\tau$. Assume without loss of generality that $\nu_0 \cdot \nu \geq 0$. Then $Q^\pm_\tau(x_0, \nu)$ can be divided into six parts (see Figure \ref{fig:cylinders}): 
\[A^{0,\pm}_\tau(x_0, \nu) = \{x \in Q^\pm_\tau(x_0, \nu) \colon x \in Q^\pm_{q\tau}(x_0, \nu_0),\ x \mp \tau \nu \in Q^\mp_{q\tau}(x_0, \nu_0)\},\]
\[A^{+,\pm}_\tau(x_0, \nu) = \{x \in Q^\pm_\tau(x_0, \nu) \colon x \in Q^\pm_{q\tau}(x_0, \nu_0),\ x \mp \tau \nu \in Q^\pm_{q\tau}(x_0, \nu_0)\},\]
\[A^{-,\pm}_\tau(x_0, \nu) = \{x \in Q^\pm_\tau(x_0, \nu) \colon x \in Q^\mp_{q\tau}(x_0, \nu_0),\ x \mp \tau \nu \in Q^\mp_{q\tau}(x_0, \nu_0)\}.\]
\begin{figure}[h]
\begin{center}
  \includegraphics[width=0.4\textwidth]{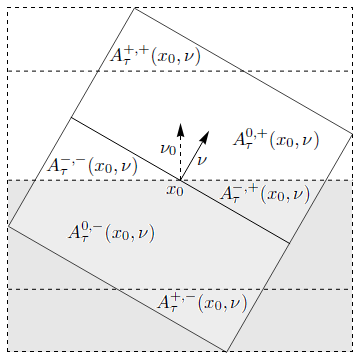}
\caption{Sets $A^{0,\pm}_\tau(x_0, \nu)$, $A^{+,\pm}_\tau(x_0, \nu)$, $A^{-,\pm}_\tau(x_0, \nu)$.}\label{fig:cylinders}
\end{center}
\end{figure}

By definition, $A^{\mp,+}_\tau(x_0, \nu) \cup A^{\pm,-}_\tau(x_0, \nu) \subset Q^\mp_{q\tau}(x_0, \nu_0)$, so 
\begin{multline*} 
\int_{A^{\pm,-}_\tau(x_0, \nu)} |f(x + \tau \nu_0) - f(x)|^2 \, \dd x = \int_{A^{\pm,-}_\tau(x_0, \nu)} |f(x + \tau \nu_0) - f^\mp(x_0) + f^\mp(x_0)- f(x)|^2\, \dd x \\ \leq 2 \int_{A^{\mp,+}_\tau(x_0, \nu)} |f - f^\mp(x_0)|^2 + 2 \int_{A^{\pm,-}_\tau(x_0, \nu)} |f - f^\mp(x_0)|^2 = 2 \int_{A^{\mp,+}_\tau(x_0, \nu) \cup A^{\pm,-}_\tau(x_0, \nu)} |f - f^\mp(x_0)|^2\\ 
\leq 4 \|f\|_{L^\infty(Q^\mp_{q\tau}(x_0, \nu_0))^n} \int_{Q^\mp_{q\tau}(x_0, \nu_0)} |f - f^\mp(x_0)| 
\end{multline*} 
and 
\begin{multline*} 
\frac{1}{\cL^m(Q^-_\tau(x_0,\nu))}\int_{A^{\pm,-}_\tau(x_0, \nu)} |f(x + \tau \nu_0) - f(x)|^2 \, \dd x \\ \leq  4 \|f\|_{L^\infty(Q^\mp_{q\tau}(x_0, \nu_0))^n} q^m\fint_{Q^\mp_{q\tau}(x_0, \nu_0)} |f - f^\mp(x_0)| \to 0 \text{ as } \tau \to 0^+.
\end{multline*} 
Therefore, 
\begin{multline*} 
\limsup_{\tau \to 0^+} \fint_{Q^-_\tau(x_0, \nu_0)} |f(x + \tau \nu_0) - f(x)|^2\, \dd x \\ = \limsup_{\tau \to 0^+} \frac{1}{\cL^m(Q^-_\tau(x_0,\nu))}\int_{A^{0,-}_\tau(x_0, \nu)} |f(x + \tau \nu_0) - f(x)|^2\, \dd x  \\ \leq \limsup_{\tau \to 0^+} \fint_{A^{0,-}_\tau(x_0, \nu)} |f(x + \tau \nu_0) - f(x)|^2\, \dd x. 
\end{multline*} 
Then, by the triangle inequality in $L^2(A^{0,-}_\tau(x_0, \nu))^n$, 
\begin{multline*} 
\sqrt{\fint_{A^{0,-}_\tau(x_0, \nu)} |f(x + \tau \nu_0) - f(x)|^2\, \dd x} \\ \leq \sqrt{\fint_{A^{0,+}_\tau(x_0, \nu)} |f - f^+(x_0)|^2} + |f^+(x_0) - f^-(x_0)| + \sqrt{\fint_{A^{0,-}_\tau(x_0, \nu)} |f - f^-(x_0)|^2} . 
\end{multline*} 
We estimate 
\[\fint_{A^{0,\pm}_\tau(x_0, \nu)} |f - f^\pm(x_0)|^2 \leq 2 \|f\|_{L^\infty(A^{0,\pm}_\tau(x_0, \nu))^n} \frac{\cL^m(Q^\pm_{q\tau}(x_0, \nu_0))}{\cL^m(A^{0,\pm}_\tau(x_0, \nu))} \fint_{Q^\pm_{q\tau}(x_0, \nu_0)} |f - f^\pm(x_0)|. \]
Since the quotient $\cL^m(Q^\pm_{q\tau}(x_0, \nu_0))/\cL^m(A^{0,\pm}_\tau(x_0, \nu))$ is independent of $\tau$, the r.\,h.\,s.\ converges to $0$ as $\tau \to 0^+$, whence 
\[ j_{f, \nu}(x_0)\leq |f^+(x_0) - f^-(x_0)|.\]
As $\nu \in \bS^{m-1}$ is arbitrary, we conclude.  
\end{proof} 

Either one of the inclusions $J_f \subset \widetilde{J}_f$ and $\widetilde{J}_f \subset S_f$ may be strict, even if $f$ is a $BV$ function. For an example of $x \in \widetilde{J}_f \setminus J_f$, take any domain $\Omega \subset \mathbb{R}^2$ containing $0$ and consider $f = \chi_{\Omega \cap [0,1]^2}$. Then, setting $\nu_0 = (0,1)$, 
\[ j_f(0)^2 \geq j_{f,\nu_0}(0)^2 = \limsup_{\tau \to 0^+} \fint_{Q_\tau^-(0, \nu_0)} \chi_{\{x_1 > 0\}}(x) \dd x  = \frac{1}{2} >0. \]
Thus, $0 \in \widetilde{J}_f$. At the same time,  $0 \not \in J_f$ (see~\eqref{jump_def}). 

For an example of $S_f \neq \widetilde{J}_f$, take any bounded domain $\Omega \subset \mathbb{R}^2$ containing $0$ and consider $f \in BV(\Omega)$ given by $f(x) = \left|\log |x|\right|^{1/2}$.  Take any $\nu \in \bS^1$ and $\tau > 0$. Setting $\widehat{Q}^-_\tau(0,\nu) := \left\{ x \in Q_\tau^-(0, \nu)\colon \nu \cdot x > - \tau/2\right\}$, we have $4 \tau^2 > |x + \tau \nu|^2 = |x|^2 + 2 \tau \nu \cdot x + \tau^2 \geq |x|^2$ for $x \in \widehat{Q}^-_\tau(0,\nu)$. 
Thus, using radial symmetry and monotonicity of $f$,  
\begin{multline*} \fint_{Q_{\tau}^-(0, \nu)} \left|f(x + \tau \nu) - f(x)\right|^2\dd x =  \fint_{\widehat{Q}^-_\tau(0,\nu) } \left|f(x + \tau \nu) - f(x)\right|^2\dd x  \\  \leq \fint_{\widehat{Q}^-_\tau(0,\nu)} \left|\left|\log |x|\right|^{1/2} - \left|\log 2\tau\right|^{1/2}\right|^2\dd x .   
\end{multline*} 
We estimate 
\[\left|\log |x|\right|^{1/2} - \left|\log 2\tau\right|^{1/2} = \frac{- \log |x| +  \log 2\tau}{\left|\log |x|\right|^{1/2} + \left|\log 2\tau\right|^{1/2}} \leq - \frac{\log \frac{|x|}{2 \tau}}{2 \left|\log 2\tau\right|^{1/2}} \]
and so, by a change of variables $x = \tau y$, 
\[\fint_{Q_{\tau}^-(0, \nu)} \left|f(x + \tau \nu) - f(x)\right|^2\dd x  \leq \fint_{\widehat{Q}^-_\tau(0,\nu)} \left|\frac{\log \frac{|x|}{2 \tau}}{2 \left|\log 2\tau\right|^{1/2}}\right|^2 \dd x = \frac{1}{4 \left|\log 2\tau\right|} \fint_{\widehat{Q}_1^-(0, \nu)} \left|\log\frac{y}{2}\right|^2 \dd y.\]
Since the integral on the r.\,h.\,s.\ is finite, the whole expression converges to $0$ as $\tau \to 0^+$ independently of $\nu \in \bS^1$. Thus, we have $j_{f}(0) = 0$, whence $0 \in \widetilde{J}_f$. On the other hand, the averages of $f$ over $B_\tau(0)$ converge to $\infty$ as $\tau \to 0^+$, so $0$ cannot be a point of approximate continuity of $f$. 

Modifying this example a bit, one can produce a BV function $f$ satisfying $0 \in S_f \setminus \widetilde{J}_f$ that is also bounded: take $f(x) = \sin \left|\log |x|\right|^{1/2}$. We do not include here the straightforward but relatively lengthy computations justifying this claim. 

It is an interesting question, that is beyond the scope of this paper, how large can the differences $ \widetilde{J}_f \setminus J_f$, $S_f \setminus \widetilde{J}_f$ be for a generic $f \in L^\infty_{loc}(\Omega)^n$. However, as a consequence of Proposition \ref{prop:L1jump} and the Federer--Vol'pert Theorem~\cite[Theorem 3.78]{afp}, if $f \in L^\infty_\loc(\Omega)^n \cap BV_\loc(\Omega)^n$, the three sets $S_f$, $\widetilde{J}_f$ and $J_f$ coincide up to $\cH^{m-1}$-negligible sets. 
\begin{thm} \label{thm:nonbv}
Let $f \in L^\infty(\Omega)^n$, suppose that $\cE$ admits a minimizer $u \in L^\infty(\Omega)^n \cap BV(\Omega)^n$, $\psi$ is $C^2$ and \eqref{sconv} holds on $\{ z\in\bR^n\colon|z|\le \|u\|_{L^\infty(\Omega)^n}+\|f\|_{L^\infty(\Omega)^n}\}$. Assume \ref{Hdiff} or that $\cR$ is of form \eqref{inf_conv} and \ref{Hdiff'} holds. Then $J_u \subset \widetilde{J}_f$ up to a $\cH^{m-1}$-negligible set and 
\[|u^+ - u^-|(x_0) \leq \sqrt{\Lambda/\lambda}\, j_f(x_0) \quad \text{for }\cH^{m-1}\text{-a.\,e.\ } x_0 \in J_u. \]
\end{thm}

\begin{proof} 
By Lemma \ref{lem:inner_diff} (or Lemma \ref{lem:inner_diff_inf} in the inf-convolution setting) we have for any (directional) inner variation $\varphi$ and $\vartheta \in [0,1]$
\begin{equation} \label{fid_liminf} 0 \leq \liminf_{\tau \to 0^+} \tfrac{1}{\tau}(\cF(u^\varphi_{\vartheta,\tau} - f) - \cF(u - f)) +  \tfrac{1}{\tau}(\cF(u^\varphi_{\vartheta,-\tau} - f) - \cF(u - f)) .
\end{equation} 
We take $\Gamma$, $x_0$, $\nu_0 = \nu_u(x_0)$ and $r_0$ as in the beginning of the proof of Theorem \ref{thethm}, except now we cannot assume that the traces $f^\pm$ exist on both sides of $\Gamma$. Instead we assume that $x_0$ is a Lebesgue point of $j_{f}$ with respect to $\cH^{m-1} \mres \Gamma$, in particular
\begin{equation} \label{Leb_point_j}
\fint_{Q_r \cap \Gamma} j_{f}(x)^2 \dd \cH^{m-1}(x) \stackrel{r\to 0}{\longrightarrow} j_{f}(x_0)^2.
\end{equation} 
As in the proof of Lemma~\ref{lem:fidelity}, by an isometric change of coordinates, we assume that $\nu_0 = e_m$, $x_0 = 0$ and denote $x = (x', x_m)$, $Q_r(x_0, \nu_0) = Q_r$, $B^{m-1}_r(x_0,\nu_0) = B^{m-1}_r$, 
\[\Gamma = \left\{\gamma(x')\colon x' \in B^{m-1}_r\right\},\]
$\gamma(x') = (x', \widetilde{\gamma}(x'))$. We recall that we assume $\widetilde{\gamma}$ is $C^1$.
For $s\le r$, we let $L_s = \max_{B^{m-1}_s}|D\widetilde{\gamma}|$ the
Lipschitz constant of $\widetilde{\gamma}$ on $B^{m-1}_s$, which
is such that $\lim_{s\to 0} L_s=0$.

\begin{figure}[h]
\begin{center}
  \includegraphics[width=0.4\textwidth]{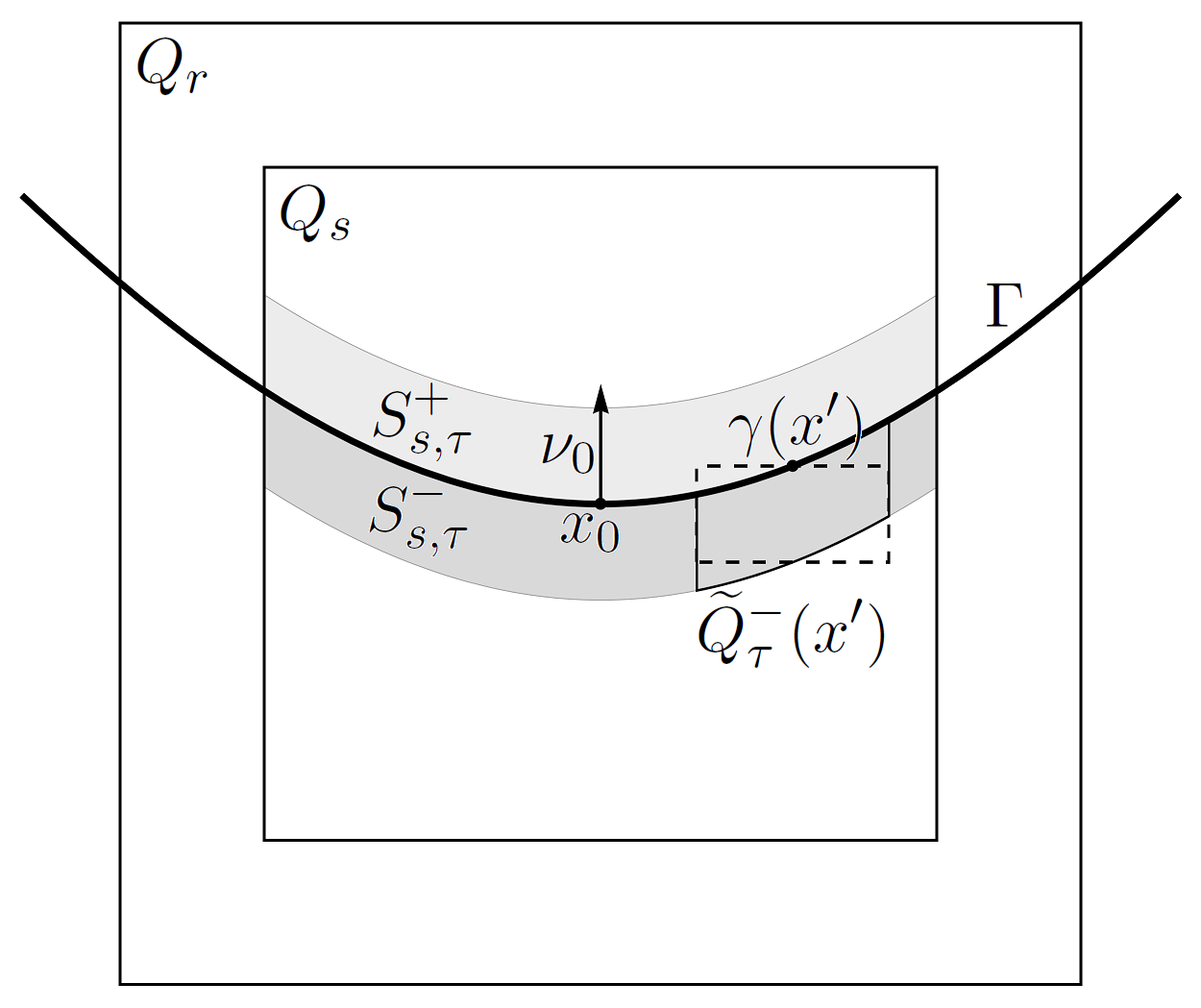}
\caption{Notation in the proof of Theorem \ref{thm:nonbv}. The region bounded by the dashed line is $Q^-_\tau(\gamma(x'), \nu_0)$.}\label{fig:t72}
\end{center}
\end{figure}
Given $0<s<r$, we take $\varphi \in C_c^\infty(\Omega)^n$ such that the support of $\varphi$ is contained in $Q_r$ and $\varphi = \nu_0 \widetilde{\varphi}$, where $0 \leq \widetilde{\varphi} \leq 1$ and $\widetilde{\varphi}=1$ on $\overline{B^{m-1}_s}\times [-r/2,r/2]$. We denote
\[S^-_{s,\tau} = \{(x',x_m) \colon x' \in \overline{B^{m-1}_s}, \ \widetilde{\gamma}(x') - \tau \leq x_m \leq \widetilde{\gamma}(x')\}, \]
\[S^+_{s,\tau} = \{(x',x_m) \colon x' \in \overline{B^{m-1}_s}, \ \widetilde{\gamma}(x') \leq x_m \leq \widetilde{\gamma}(x') + \tau\}, \]
\[u_{\tau}(x) =u(x', x_m + \tau), \quad f_{\tau}(x) =f(x', x_m + \tau), \]
\[ u_{\vartheta,\tau} = \vartheta u_\tau + (1 - \vartheta) u,\quad f_{\vartheta,\tau} = \vartheta f_\tau + (1 - \vartheta) f.\]
We note that $u_\tau = u^\varphi_\tau$ and $u_{\vartheta,\tau} = u^\varphi_{\vartheta,\tau}$ on $S^-_{s,\tau}$ as soon as $\tau\le r/2$. We decompose 
\[\cF(u^\varphi_{\vartheta,\pm\tau} - f) - \cF(u - f) = \int_{S^\mp_{s,\tau}} \!\!\psi(u^\varphi_{\vartheta,\pm \tau} - f)-\psi(u - f) + \int_{Q_r\setminus S^\mp_{s,\tau}} \!\!\psi(u^\varphi_{\vartheta,\pm \tau} - f)-\psi(u - f).\]
Reasoning as in the proof of Lemma \ref{lem:fidelity}
(where we use that $\psi$ is Lipschitz in $\{ z\in\bR^n\colon|z|\le \|u\|_{L^\infty(\Omega)^n}+\|f\|_{L^\infty(\Omega)^n}\}$),
we get 
\begin{equation*}\frac{1}{\tau}\int_{Q_r\setminus S^\mp_{s,\tau}} \!\psi(u^\varphi_{\vartheta,\pm \tau} - f)-\psi(u - f) \le
  \frac{C}{\tau}\int_{Q_r\setminus S^\mp_{s,\tau}} \! |u^\varphi_{\vartheta,\pm \tau} -u|
 \leq
C \vartheta |\nu_0\cdot\D u|(Q_r\setminus (Q_s \cap \Gamma )).
\end{equation*}
On the other hand, using the convexity of $\psi$,
\begin{multline*}
\int_{S^-_{s,\tau}}\psi(u^\varphi_{\vartheta,\tau} - f)-\psi(u - f) + \int_{S^+_{s,\tau}} \psi(u^\varphi_{\vartheta,-\tau} - f)-\psi(u - f) \\ = \int_{S^-_{s,\tau}} \psi(u_{\vartheta,\tau} - f)-\psi(u - f) + \psi(u_{(1-\vartheta),\tau} - f_\tau)-\psi(u_\tau - f_\tau) \\
\leq \int_{S^-_{s,\tau}}\vartheta D\psi(u_{\vartheta,\tau} - f)\cdot (u_\tau - u)  + \vartheta D\psi(u_{(1-\vartheta),\tau} - f_\tau)\cdot (u - u_\tau) \\
= \vartheta\int_{S^-_{s,\tau}} (D\psi(u_{\vartheta,\tau} - f)- D\psi(u_{(1-\vartheta),\tau} - f_\tau))\cdot (u_\tau - u) \\= \vartheta\int_{S^-_{s,\tau}} (u_{\vartheta,\tau} - u_{(1-\vartheta),\tau} - f + f_\tau)\cdot A_{\vartheta, \tau}\cdot (u_\tau - u),  
\end{multline*}
where the symmetric positive definite matrix $A_{\vartheta,\tau}$ is
given by:
\[A_{\vartheta, \tau} = \int_0^1 D^2 \psi (u_{(1-\vartheta),\tau} - f_\tau + s(u_{\vartheta,\tau} - u_{(1-\vartheta),\tau} - f + f_\tau)) \dd s.\]
Using $2\xi\cdot A_{\vartheta,\tau} \cdot\eta\le
\xi\cdot A_{\vartheta,\tau} \cdot\xi
+\eta\cdot A_{\vartheta,\tau}\cdot \eta$ for any $\xi,\eta$ and \eqref{sconv} it follows that
\begin{multline*} 
  (u_{\vartheta,\tau} - u_{(1-\vartheta),\tau} - f + f_\tau)\cdot A_{\vartheta, \tau}\cdot (u_\tau - u) \\ =  - (1 - 2 \vartheta)(u_\tau - u)\cdot A_{\vartheta, \tau}\cdot (u_\tau - u) + (f_\tau - f)\cdot
 {A_{\vartheta, \tau}} 
  \cdot (u_\tau - u) \\ \leq - (\tfrac{1}{2} - 2 \vartheta)(u_\tau - u)\cdot A_{\vartheta, \tau}\cdot (u_\tau - u) + \tfrac{1}{2}(f_\tau - f)\cdot A_{\vartheta, \tau} \cdot (f_\tau - f) \\ \leq - (\tfrac{1}{2} - 2 \vartheta) \lambda |u_\tau - u|^2 + \tfrac{1}{2}\Lambda |f_\tau - f|^2
\end{multline*} 
for $\vartheta \in [0, \tfrac{1}{4}]$. Using \cite[Theorem 3.108]{afp}, recalling \eqref{fid_liminf} and combining the estimates above,  
\begin{multline}\label{int_jump_est}
\vartheta (1 - 4 \vartheta)\lambda\int_{Q_s\cap \Gamma} |u^+ - u^-|^2 \,\dd \gamma_{\#} \cL^{m-1} = \vartheta (1 - 4 \vartheta)\lambda\lim_{\tau \to 0^+} \frac{1}{\tau}\int_{S^-_{s,\tau}} |u_\tau - u|^2 \\ \leq \vartheta \Lambda\liminf_{\tau \to 0^+} \frac{1}{\tau} \int_{S^-_{s,\tau}} |f_\tau - f|^2 + 2 C \vartheta |\nu_0\cdot\D u|(Q_r\setminus (Q_s \cap \Gamma )).
\end{multline}
We recall that
here, $u^\pm$ coincide with the traces of $u$ on both side of $\Gamma$. 
We estimate the pushforward measure $\gamma_{\#}\cL^{m-1}$ (recall \eqref{graph_push}) by
\[\gamma_{\#}\cL^{m-1}\mres(Q_s\cap\Gamma)
  =\frac{\cH^{m-1}\mres(Q_s\cap\Gamma)}{\sqrt{1+|D\widetilde{\gamma} \circ \gamma^{-1}}|^2}\ge \frac{1}{\sqrt{1+L_s^2}}\cH^{m-1}\mres(Q_s\cap\Gamma).\]

Now, for $x' \in B^{m-1}_s$, $\tau < r-s$, let us denote 
\[\widetilde{Q}^-_\tau(x') = \{(y', y_m) \in Q_r \colon |y' - x'| < \tau,\ \widetilde{\gamma}(y') - \tau < y_m < \widetilde{\gamma}(y')\}. \]
We observe that $\cL^m(\widetilde{Q}^-_\tau(x')) = \tau\,\cL^{m-1}(B^{m-1}_\tau)$. Then 
\[\frac{1}{\tau} \int_{S^-_{s,\tau}} |f_\tau - f|^2 \leq \int_{B^{m-1}_s} \left(\fint_{\widetilde{Q}^-_\tau(x')} |f_\tau - f|^2\right) \dd x'.\]
Note that $\fint_{\widetilde{Q}^-_\tau(x')} |f_\tau - f|^2$ is uniformly bounded by $4 \|f\|_{L^\infty(Q_r)^n}^2$. Thus, by Fatou's Lemma
\[\liminf_{\tau \to 0^+} \frac{1}{\tau} \int_{S^-_{s,\tau}} |f_\tau - f|^2 \leq \int_{B^{m-1}_s} \left(\limsup_{\tau \to 0^+}\fint_{\widetilde{Q}^-_\tau(x')} |f_\tau - f|^2\right) \dd x'.\]
Dividing \eqref{int_jump_est} by $\vartheta$ and passing to the limits $\vartheta \to 0^+$, $s \to r^-$,
\begin{equation} \label{int_jump_est2}
\lambda\int_{Q_r\cap \Gamma} |u^+ - u^-|^2 \,\dd \gamma_{\#} \cL^{m-1}  \leq  \Lambda\int_{B^{m-1}_r} \left(\limsup_{\tau \to 0^+}\fint_{\widetilde{Q}^-_\tau(x')} |f_\tau - f|^2\right) \dd x'+ 2 C |\nu_0\cdot\D u|(Q_r\setminus  \Gamma ).
\end{equation}
We estimate 
\[\cL^m\left(\widetilde{Q}^-_\tau(x') \setminus Q^-_\tau(
\gamma(x'), \nu_0)\right)\leq \cL^{m-1}(B^{m-1}_\tau)\, \tau  \max_{B^{m-1}_r} |D \widetilde{\gamma}|
  = L_r \cL^{m}(\widetilde{Q}^-_\tau(x'))\, \tau . \]
Therefore, 
\[\fint_{\widetilde{Q}^-_\tau(x')} |f_\tau - f|^2 \leq \fint_{Q^-_\tau(\gamma(x'), \nu_0)} |f_\tau - f|^2 + 4 L_r\|f\|_{L^\infty(Q_r)^n}^2\]
and
\[
  \limsup_{\tau\to 0^+}  \fint_{\widetilde{Q}^-_\tau(x')} |f_\tau - f|^2 \leq
  j_f(\gamma(x'))^2  + 4 L_r\|f\|_{L^\infty(Q_r)^n}^2.
\]
Hence, we can estimate
\[ \int_{B^{m-1}_r}\!\! \left(\limsup_{\tau \to 0^+}  \fint_{\widetilde{Q}^-_\tau(x')} |f_\tau - f|^2\right) \dd x' 
  \le \int_{B^{m-1}_r}    j_f(\gamma(x'))^2 \dd x' + 4
  \cL^{m-1}(B^{m-1}_r) L_r\|f\|_{L^\infty(Q_r)^n}^2, 
\]
\begin{equation*} 
\int_{B^{m-1}_r}    j_f(\gamma(x'))^2 \dd x' 
= \int_{Q_r\cap \Gamma} \frac{j_f(x)^2}{\sqrt{1+|D\widetilde{\gamma}(x')|^2}}\dd\cH^{m-1}(x) \le \int_{Q_r\cap \Gamma}j_f^2\,\dd\cH^{m-1}.
\end{equation*} 
Recalling \eqref{int_jump_est2}, we deduce 
\begin{multline} \label{int_jump_est3}
  \lambda\int_{Q_r\cap \Gamma} |u^+ - u^-|^2 \,\dd \gamma_{\#} \cL^{m-1}  \leq  \Lambda\int_{Q_r \cap \Gamma} j_{f}^2\, \dd \cH^{m-1}\\ 
+ 4  \cL^{m-1}(B^{m-1}_r) L_r\|f\|_{L^\infty(Q_r)^n}^2
  + 2 C |\nu_0\cdot\D u|(Q_r\setminus  \Gamma ).
\end{multline}
Finally, we divide both sides of \eqref{int_jump_est3} by $\cL^{m-1}(B^{m-1}_r) = \gamma_{\#} \cL^{m-1}(Q_r \cap \Gamma)$ and keeping in mind that $\cH^{m-1}(Q_r \cap \Gamma)/\cL^{m-1}(B^{m-1}_r)\to 1$
and $L_r\to 0$ as $r \to 0^+$, we pass to the limit obtaining the asserted inequality owing to \eqref{Leb_point_j}. 
\end{proof} 

\section{Experiments}\label{sec:experiment}
\begin{figure}[h]
\begin{center}
  \includegraphics[width=.24\textwidth]{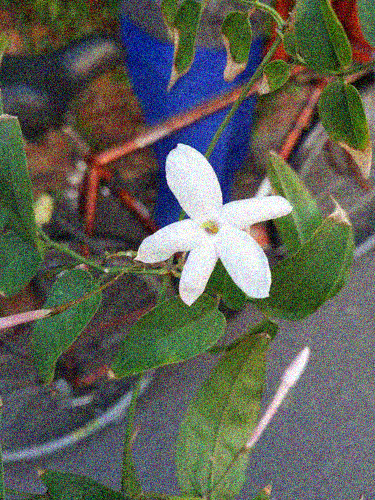}
  \includegraphics[width=.24\textwidth]{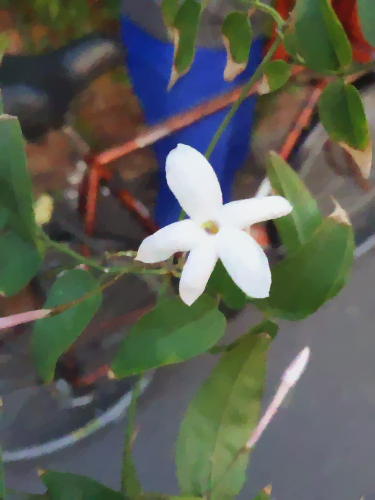}
  \includegraphics[width=.24\textwidth]{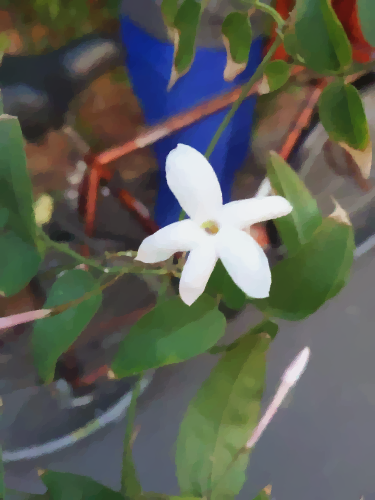}
  \includegraphics[width=.24\textwidth]{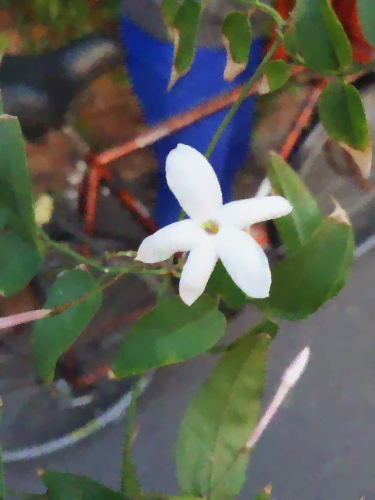}
\caption{A noisy image and the denoised versions with, respectively, the Frobenius ROF, Nuclear ROF, Spectral ROF problems.}\label{fig:flower}
\end{center}
\end{figure}
\begin{figure}[h]
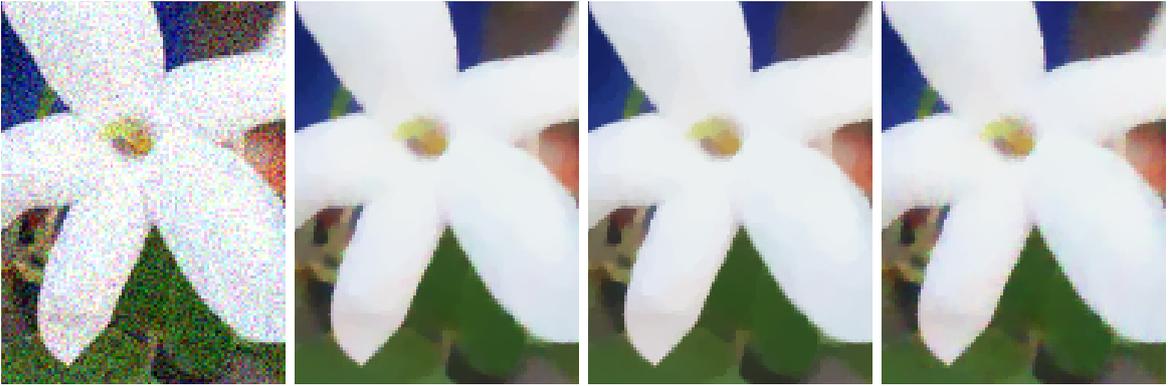

\begin{center} 
  \includegraphics[trim=120 135 80 130,clip,width=.24\textwidth]{fleur_noise}
  \includegraphics[trim=120 135 80 130,clip,width=.24\textwidth]{fln_tv_0_15}
  \includegraphics[trim=120 135 80 130,clip,width=.24\textwidth]{fln_0_15}
  \includegraphics[trim=120 135 80 130,clip,width=.24\textwidth]{fln_sp_0_15}
\caption{Detail of Figure~\ref{fig:flower}}\label{fig:closeup}
\end{center}
\end{figure}
We solved here the ``ROF'' problem~\eqref{eq:ROF} for a data term given by a noisy
color image, and the Frobenius, Nuclear and Spectral total variations. Figure~\ref{fig:flower} shows the results, which look almost identical. The close-up in Figure~\ref{fig:closeup} seems to show that the edges are better recovered with the Nuclear total variation, and quite jagged in the case of the Spectral total variation, for which we do not have a proof of jump inclusion (and since there is no control
on the lower eigenmode of $Du$, it leaves strong oscillations along the edges,
which make this regularizer probably less interesting for such tasks). 
In these experiments,
the noise was Gaussian with standard deviation $.1$ (for RGB values in $[0,1]^3$),
we show in Figures~\ref{fig:flower_morenoise}--\ref{fig:closeup_morenoise} the same experiments with stronger noise (of deviation $.3$) and stronger regularization.  
Of course, this is a discrete experiment at a fixed scale and therefore a relatively poor illustration of our main results. Observe that in these experiments, one cannot expect that the original datum (left image) represents a function $f\in BV(\Omega,[0,1]^3)$. However, being obtained by adding a small amplitude noise to a bounded variation function, we may expect that the set $\widetilde{J}_f$ of Section~\ref{sec:nonBV} corresponds to the set of (large enough) edges in the original image.
\begin{figure}[h]
	\begin{center}
		\includegraphics[width=.24\textwidth]{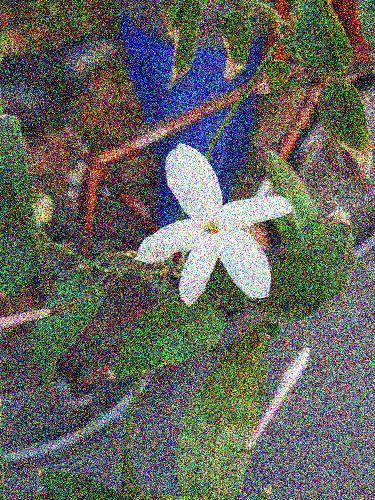}
		\includegraphics[width=.24\textwidth]{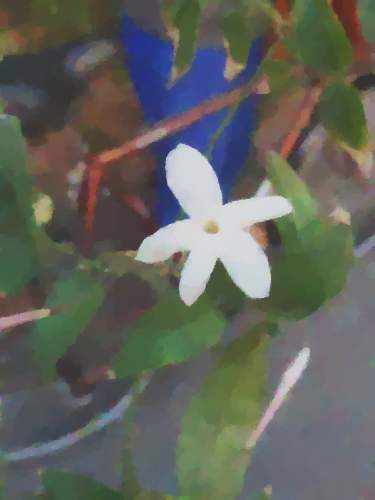}
		\includegraphics[width=.24\textwidth]{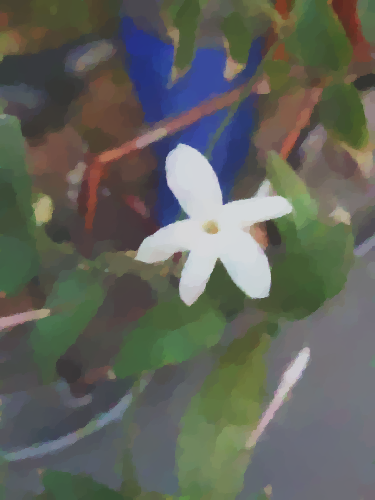}
		\includegraphics[width=.24\textwidth]{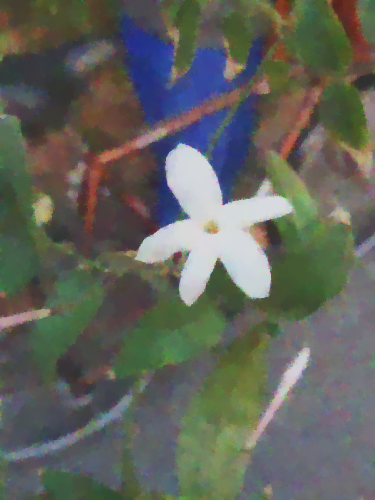}
		\caption{Same as Figure~\ref{fig:flower} with stronger noise.}\label{fig:flower_morenoise}
	\end{center}
\end{figure}
\begin{figure}[h]
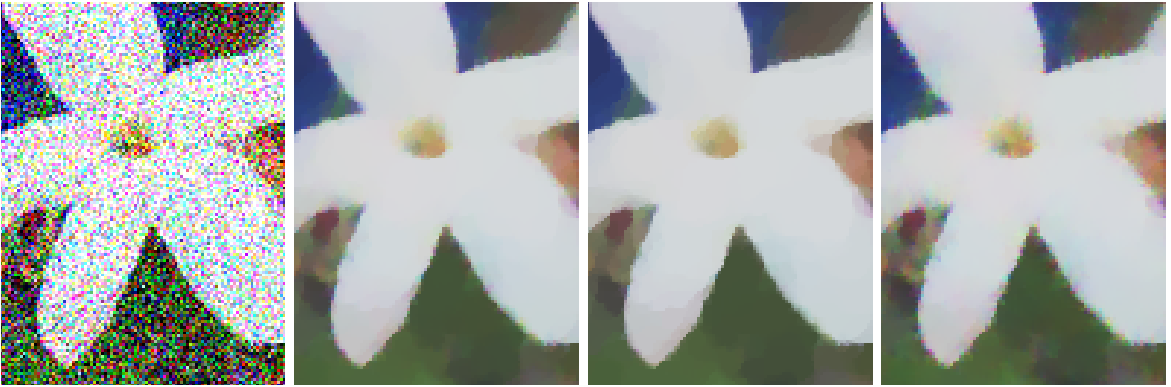

	\begin{center} 
		\includegraphics[trim=120 135 80 130,clip,width=.24\textwidth]{fleur_morenoise}
		\includegraphics[trim=120 135 80 130,clip,width=.24\textwidth]{fleur_fro_mn_4.png}
		\includegraphics[trim=120 135 80 130,clip,width=.24\textwidth]{fleur_nuc_mn_4.png}
		\includegraphics[trim=120 135 80 130,clip,width=.24\textwidth]{fleur_spe_mn_4.png}
		\caption{Detail of Figure~\ref{fig:flower_morenoise}}\label{fig:closeup_morenoise}
	\end{center}
\end{figure}

\section{Conclusion and comments}

We have introduced an approach for the study of the jump set of minimizers
of ``Rudin-Osher-Fatemi'' type problems which is more versatile than
the original approach in~\cite{Casellesetal2007},
and easier to handle than~\cite{Valkonen2015}
(even if equivalent in spirit, and much inspired by it).
We recover many cases (and more) from the previous
works~\cite{Valkonen2015,Valkonen2017}, including jump inclusion
in the ``TGV'' case (up to some smoothing). The full nonsmooth case
remains open. Also, our approach does not seem to allow to derive
further regularity, such as the continuity results of~\cite{Mercier,Casellesetal_regularity}. It is
also unclear
what exactly happens for non-differentiable norms, since at
jumps the gradients have rank one, and locally many nonsmooth
norms remain differentiable---yet experimental observations
seem to show a much worse control on the oscillations parallel
to the jumps in such cases, as in Figure~\ref{fig:closeup} (right).

Our results can be rephrased in terms of the measure $D^j u$. In particular, in the case of $\psi$ strongly convex with Lipschitz-gradient \eqref{sconv}, Theorems \ref{thethm} and \ref{thm:inf_conv} imply estimate $|D^j u| \leq \sqrt{\lambda/\Lambda} |D^j f|$. Similar bounds have been recently obtained for the whole singular part $D^s u$ in the 1D vectorial case ($m=1$, $n>1$) in \cite{GrochLasica}. To our knowledge, it remains an open question whether such estimates hold for the Cantor part $D^c u$ in $m>1$, even in the case of scalar $TV$, although in $n=1$ it is known that $D^s f = 0 \implies D^s u =0$ for general regularizers of form \eqref{R_rho} if $\Omega$ is convex \cite{LasicaRybka}, and that regularity away from the jump set is
transferred to the solution (for $\cR$ nice enough)~\cite{Casellesetal_regularity,Mercier}.

Finally, a natural question is whether the results shown in this work
also hold for the gradient flow of the total variation or similar
functionals. In~\cite{Casellesetal2007,CasellesJalalzaiNovaga}, this
is deduced from Crandall--Liggett's theorem in $L^\infty(\Omega)$,
which can be applied because minimizing the ROF problem~\eqref{eq:ROF}
is contractive in the sup norm. Yet, this is unknown (and possibly not
true) in the vectorial case and no easy conclusion may be drawn. In relation to this, we mention a recent paper \cite{Kazaniecki2023}, where an interesting continuity property of the map $w \mapsto D^j w$ is obtained. However, its applicability in our context remains a matter of further investigation.

\section*{Acknowledgements}

The authors are indebted to R\'emy Rodiac for inspiring conversations about differentiability along inner variations, and to Ir\`ene Waldspurger for helping sort out  
the properties of Schatten norms. We are also grateful to the anonymous reviewers for their comments and suggestions that helped improve the paper.
Figures \ref{fig:push}, \ref{fig:cylinders}, and \ref{fig:t72} were prepared using Wolfram Cloud and MS Paint.

\bibliographystyle{siamplain}
\bibliography{bib.bib}

\end{document}